\numberwithin{equation}{section}
\renewcommand{\email}[2][]{%
  \ifx\emails\@empty\relax\else{\g@addto@macro\emails{,\space}}\fi%
  \@ifnotempty{#1}{\g@addto@macro\emails{\textrm{(#1)}\space}}%
  \g@addto@macro\emails{#2}%
}
\newtheorem{theorem}{Theorem}[section]
\newtheorem{lemma}[theorem]{Lemma}
\newtheorem{proposition}[theorem]{Proposition}
\theoremstyle{definition}
\newtheorem{definition}[theorem]{Definition}}
\theoremstyle{remark}
\newtheorem{remark}[theorem]{Remark}}
\newcommand{\weyl}{W^\circ}
\newcommand{\im}{\mathsf{i}}
\newcommand{\Real}{\mathsf{Re}\hspace{0.5mm}}
\newcommand{\Imag}{\mathsf{Im}\hspace{0.5mm}}
\newcommand{\cev}[1]{\reflectbox{\ensuremath{\vec{\reflectbox{\ensuremath{#1}}}}}}
\newcommand{\parF}{\mathcal{P}_{\mathrm{fin}}}
\newcommand{\parP}{\mathcal{P}_{\mathrm{pos}}}
\title{Structural properties of the Airy wanderer line ensembles}
\date{\today}
\author{Evgeni Dimitrov} 
\begin{document}

\begin{abstract}
The Airy wanderer line ensembles are infinite-parameter generalizations of the classical Airy line ensemble that arise naturally as scaling limits of inhomogeneous (spiked) models in the Kardar-Parisi-Zhang universality class. In this paper, we establish several structural properties of these ensembles. Our results show their laws depend continuously on the parameters, which encode the asymptotic slopes of the ensemble's curves near positive and negative infinity. We further prove that these ensembles admit multiple monotone couplings with respect to their parameters. Finally, we show that the Airy wanderer line ensembles are extreme points in the space of all Brownian Gibbsian line ensembles on the real line.
\end{abstract}

\maketitle

\tableofcontents

%
%
\section{Introduction and main results}\label{Section1}

%
%
\subsection{Preface}\label{Section1.1} The {\em Airy line ensemble} $\mathcal{A}$ is a sequence $\{\mathcal{A}_{i}: \mathbb{R} \to \mathbb{R}\}_{i \geq 1}$ of random continuous functions, which are defined on a common probability space and are almost surely strictly ordered in the sense that $\mathcal{A}_{i}(t) > \mathcal{A}_{i+1}(t)$ for all $i \geq 1$ and $t \in \mathbb{R}$. The Airy line ensemble arises as the edge-scaling limit of time-dependent Wigner matrices (including {\em Dyson Brownian motion}) \cite{Sod15}, lozenge tilings \cite{AH21}, as well as various integrable models of non-intersecting random walkers and last passage percolation \cite{DNV19}. A formal construction of the Airy line ensemble was given in \cite{CorHamA}, where $\mathcal{A}$ was defined as the weak edge-scaling limit of the {\em Brownian watermelon}, although various projections of the ensemble had been identified earlier. For example, the top curve $\mathcal{A}_1$ is a stationary process, called the {\em Airy process}, which was constructed as the weak limit of paths in the {\em polynuclear growth model} \cite{J03, Spohn}. In addition, the one-point marginal of $\mathcal{A}_1$ is the {\em (GUE) Tracy--Widom distribution} from random matrix theory \cite{TWPaper}. Due to its appearance as a universal scaling limit, and its role in the construction of the {\em Airy sheet} in \cite{DOV22}, the Airy line ensemble has become a central object of interest in the {\em Kardar--Parisi--Zhang (KPZ)} universality class \cite{CU2}.

In \cite{dimitrov2024airy, dimitrov2024tightness} we constructed infinite-parameter generalizations $\mathcal{A}^{a,b,c} = \{\mathcal{A}^{a,b,c}_{i}: \mathbb{R} \to \mathbb{R}\}_{i \geq 1}$ of the Airy line ensemble, which we call the {\em Airy wanderer line ensembles}, as weak limits of inhomogeneous {\em Schur processes}, or equivalently {\em geometric last passage percolation (LPP)}; see \cite{DY25} for a brief explanation of the distributional equivalence between the two. This construction generalizes the finite-parameter ensembles previously obtained in \cite{CorHamA}, based on correlation kernel formulas from \cite{AFM}, and relies on a careful interpretation of the correlation kernel formulas from \cite{BP08}. The Airy wanderer line ensembles are believed to arise as universal scaling limits of models in the KPZ universality class, which have inhomogeneities or spikes. For example, the one-point marginals of the top curves of these ensembles are given by the {\em Baik--Ben Arous--P{\'e}ch{\'e} (BBP) distributions}, which appear in the asymptotics of sample covariance matrices \cite{BBP05}, finite-rank perturbations of random matrices \cite{Peche06}, asymmetric exclusion processes \cite{BFS09,IS07}, directed random polymers \cite{BCD21, TV20}, and exponential LPP \cite{BP08}.  

The goal of this paper is to initiate a detailed study of the Airy wanderer line ensembles $\mathcal{A}^{a,b,c}$ and investigate some of their remarkable properties. Specifically, we prove the following statements.
\begin{enumerate}
\item[I.] {\bf Symmetries.} We show that the family of Airy wanderer line ensembles is closed under vertical shifts, and horizontal reflections across the origin. We further explain how the parameters of the model change under these transformations. See Proposition \ref{S13P1} for the precise statements.
\item[II.] {\bf Continuity.} We show that the laws of the Airy wanderer line ensembles vary continuously with their parameters. See Proposition \ref{S13P2}.
\item[III.] {\bf Monotonicity.} We show that the Airy wanderer line ensembles admit {\em multiple} monotone couplings in their parameters. See Theorem \ref{Thm.MonCoupling}.
\item[IV.] {\bf Asymptotic slopes.} We show that under a parabolic shift the curves of the Airy wanderer line ensembles have asymptotic slopes near positive and negative infinity, which are encoded by the parameters of the model. See Theorem \ref{Thm.Slopes}.
\item[V.] {\bf Extremality.} We show that the Airy wanderer line ensembles are extreme points in the space of all line ensembles on $\mathbb{R}$ that satisfy the Brownian Gibbs property. See Theorem \ref{Thm.Extreme} for the precise statement.
\end{enumerate} 
The rest of the introduction is structured as follows. In Section \ref{Section1.2} we formally define the Airy wanderer line ensembles, and recall some results from \cite{dimitrov2024airy}. In Section \ref{Section1.3} we state some of the more basic properties of these ensembles, including the aforementioned symmetries and continuity. The monotonicity, asymptotic description near positive/negative infinity, and extremality are discussed in Section \ref{Section1.4}, and form the main results of the paper. Section \ref{Section1.5} contains an outline of the paper, and discusses some of the key ideas behind our arguments.

%
%
\subsection{The Airy wanderer line ensembles}\label{Section1.2} The goal of this section is to give a formal definition of the Airy wanderer line ensembles constructed in \cite{CorHamA,dimitrov2024airy}. Our exposition closely follows that of \cite[Section 1.2]{dimitrov2024airy}. We begin by formally introducing the notion of a line ensemble.
\begin{definition}\label{Def.LE} Let $\Sigma \subseteq \mathbb{Z}$ and $\Lambda \subseteq \mathbb{R}$ be an interval. We let $C (\Sigma \times \Lambda)$ denote the space of continuous functions $f: \Sigma \times \Lambda \rightarrow \mathbb{R}$ with the topology of uniform convergence over compact sets, and corresponding Borel $\sigma$-algebra.

A {\em $\Sigma$-indexed line ensemble $\mathcal{L}$ on $\Lambda$} is a random variable defined on a probability space $(\Omega, \mathcal{F}, \mathbb{P})$ that takes values in $C (\Sigma \times \Lambda)$. Setting $\mathcal{L}_i(t) = \mathcal{L}(i, t)$ for $i \in \Sigma$, $t \in \Lambda$, we have that $\mathcal{L}_i$ are random continuous functions, sometimes called {\em lines}, and we write $\mathcal{L} = \{\mathcal{L}_i\}_{i \in \Sigma}$.
\end{definition}
\begin{remark}\label{Rem.LE} A comprehensive treatment of line ensembles can be found in \cite[Section 2]{DEA21}.
\end{remark}

We next introduce the parameter space of the Airy wanderer line ensembles.
\begin{definition}\label{DLP} 
We assume we are given four sequences of non-negative real numbers $\{a_i^+\}_{ i \geq 1}$, $\{a_i^-\}_{ i \geq 1}$, $\{b_i^+\}_{ i \geq 1}$, $\{b_i^-\}_{ i \geq 1}$, such that 
\begin{equation}\label{ParProp}
\sum_{i = 1}^{\infty} (a_i^+ + a_i^- + b_i^+ + b_i^-) < \infty \mbox{ and } a_{i}^{\pm} \geq a_{i+1}^{\pm},  b_{i}^{\pm} \geq b_{i+1}^{\pm} \mbox{ for all } i \geq 1,
\end{equation}
as well as two real parameters $c^+, c^-$. We let $J_a^{\pm} = \inf \{ k \geq 1: a_{k}^{\pm} = 0\} - 1$ and $J_b^{\pm} = \inf \{ k \geq 1: b_{k}^{\pm} = 0\} - 1$. In words, $J_a^{\pm}$ is the largest index $k$ such that $a_{k}^{\pm} > 0$, with the convention that $J_a^{\pm} = 0$ if all $a_k^{\pm} = 0$ and $J_a^{\pm} = \infty$ if all $a_k^{\pm} > 0$, and analogously for $J_b^{\pm}$. For future reference, we denote the set of parameters satisfying the above conditions such that $c^- = 0$ and $J_a^- + J_b^- < \infty$ by $\parF$, and the subset of $\parF$ such that $c^+ = J_a^- = J_b^- = 0$ by $\parP$.

Lastly, we define 
$$\underline{a} = \begin{cases} 0  &\hspace{-3.5mm}  \mbox{ if } a_1^- + b_1^- > 0 \mbox{ or } c^- \neq 0, \\   \infty & \hspace{-3.5mm} \mbox{ if }   a_1^- + b_1^- =  c^- = 0\mbox{ and }  a_1^+ = 0, \\ 1/a_1^+ & \hspace{-3.5mm}  \mbox{ if }a_1^- + b_1^- = c^- =  0 \mbox{ and } a_1^+ > 0, \end{cases} \hspace{1mm} \mbox{ and }\hspace{1mm} \underline{b} = \begin{cases} 0 &\hspace{-3.5mm}  \mbox{ if }  a_1^- + b_1^- > 0 \mbox{ or } c^- \neq 0, \\  -\infty & \hspace{-3.5mm} \mbox{ if } a_1^- + b_1^- = c^- =  0 \mbox{ and } b_1^+ = 0, \\ -1/b_1^+ & \hspace{-3.5mm} \mbox{ if } a_1^- + b_1^- = c^- =  0 \mbox{ and } b_1^+ > 0. \end{cases}$$
Observe that $\underline{a} \in [0, \infty]$ and $\underline{b} \in [- \infty, 0]$. 
\end{definition}

For $z \in \mathbb{C} \setminus \{0\}$, we define the function
\begin{equation}\label{DefPhi}
\Phi_{a,b,c}(z) = e^{c^+z + c^-/ z} \cdot \prod_{i = 1}^{\infty} \frac{(1 + b_i^+ z) (1 + b_i^- /z)}{(1 - a_i^+ z) ( 1 - a_i^{-}/z)}.
\end{equation}
From (\ref{ParProp}) and \cite[Chapter 5, Proposition 3.2]{Stein}, we have that the above defines a meromorphic function on $\mathbb{C} \setminus \{0\}$ whose zeros are at $\{-(b_i^+)^{-1}\}_{i =1}^{J_b^+}$ and $\{- b_i^-\}_{i =1}^{J_b^-}$, while its poles are at $\{(a_i^+)^{-1}\}_{i =1}^{J_a^+}$ and $\{ a_i^-\}_{i =1}^{J_a^-}$. We also observe that $\Phi_{a,b,c}(z) $ is analytic in $\mathbb{C} \setminus [\underline{a}, \infty)$, and its inverse is analytic in $\mathbb{C} \setminus (-\infty, \underline{b}]$, where $\underline{a}, \underline{b}$ are as in Definition \ref{DLP}.\\

The following definitions present the Airy wanderer kernel, originally introduced in \cite{BP08}, starting with the contours that appear in it. 
\begin{definition}\label{DefContInf}  Fix $a \in \mathbb{R}$. We let $\Gamma_a^+$ denote the union of the contours $\{a + y e^{\pi \im /4}\}_{y \in \mathbb{R}_+}$ and $\{a + y e^{-\pi \im/4}\}_{y \in \mathbb{R}_+}$, and $\Gamma_a^-$ the union of the contours $\{a + y e^{ 3\pi \im /4}\}_{y \in \mathbb{R}_+}$ and $\{a + y e^{-3 \pi \im/4}\}_{y \in \mathbb{R}_+}$. Both contours are oriented in the direction of increasing imaginary part.
\end{definition}

\begin{definition}\label{3BPKernelDef} Assume the same notation as in Definition \ref{DLP}. For $t_1, t_2,x_1,x_2 \in \mathbb{R}$, we define 
\begin{equation}\label{3BPKer}
\begin{split}
&K_{a,b,c} (t_1, x_1; t_2, x_2) = K^1_{a,b,c} (t_1, x_1; t_2, x_2) +  K^2_{a,b,c} (t_1, x_1; t_2, x_2) +  K^3_{a,b,c} (t_1, x_1; t_2, x_2), \mbox{ with } \\
& K^1_{a,b,c} (t_1, x_1; t_2, x_2) = \frac{1}{2\pi \im} \int_{\gamma}dw \cdot  e^{(t_2 - t_1)w^2 + (t_1^2 - t_2^2) w + w (x_2-x_1) + x_1 t_1 - x_2 t_2 - t_1^3/3 + t_2^3/3},    \\
&K^2_{a,b,c} (t_1, x_1; t_2, x_2)  = -  \frac{{\bf 1}\{ t_2 > t_1\} }{\sqrt{4\pi (t_2 - t_1)}} \cdot e^{ - \frac{(x_2 - x_1)^2}{4(t_2 - t_1)} - \frac{(t_2 - t_1)(x_2 + x_1)}{2} + \frac{(t_2 - t_1)^3}{12} }, \\
& K^3_{a,b,c} (t_1, x_1; t_2, x_2) = \frac{1}{(2\pi \im)^2} \int_{\Gamma_{\alpha }^+} d z \int_{\Gamma_{\beta}^-} dw \frac{e^{z^3/3 -x_1z - w^3/3 + x_2w}}{z + t_1 - w - t_2} \cdot \frac{\Phi_{a,b,c}(z + t_1) }{\Phi_{a,b,c}(w + t_2)}.
\end{split}
\end{equation}
In (\ref{3BPKer}) $\alpha, \beta  \in \mathbb{R}$ are such that $\alpha + t_1 < \underline{a}$ and $\beta + t_2 > \underline{b}$, the function $\Phi_{a,b,c}$ is as in (\ref{DefPhi}) and the contours of integration in $K^3_{a,b,c}$ are as in Definition \ref{DefContInf}. If $\Gamma^+_{\alpha + t_1} (=t_1 + \Gamma^+_{\alpha})$ and $\Gamma^-_{\beta + t_2} (= t_2 + \Gamma^-_{\beta} )$ have zero or one intersection point, we take $\gamma = \emptyset$ and then $K^1_{a,b,c} \equiv 0$. Otherwise, $\Gamma^+_{\alpha + t_1} $ and $\Gamma^-_{\beta + t_2}$ have exactly two intersection points, which are complex conjugates, and $\gamma$ is the straight vertical segment that connects them with the orientation of increasing imaginary part. See Figure \ref{S12}.
\end{definition}
\begin{figure}[h]
    \centering
     \begin{tikzpicture}[scale=2.7]

        \def\tra{3} 
        \draw[->, thick, gray] (-1.2,0)--(1.2,0) node[right]{$\Real$};
        \draw[->, thick, gray] (0,-1.2)--(0,1.2) node[above]{$\Imag$};

        \draw[-,thick][black] (0.6,0) -- (0.2,-0.4);
        \draw[->,thick][black] (-0.4,-1) -- (0.2,-0.4);
        \draw[black, fill = black] (0.6,0) circle (0.02);
        \draw (0.6,-0.3) node{$\beta + t_2$};
        \draw[->,very thin][black] (0.6,-0.2) -- (0.6, -0.05);
        \draw[->,thick][black] (0.6,0) -- (0.2,0.4);
        \draw[-,thick][black]  (-0.4,1) -- (0.2,0.4);       

        \draw[-,thick][black] (-0.75, 0) -- (-0.25,-0.5);
        \draw[->,thick][black] (0.25, -1) -- (-0.25, -0.5);
        \draw[black, fill = black] (-0.75,0) circle (0.02);
        \draw (-0.75,-0.275) node{$\alpha + t_1$};
        \draw[->,very thin][black] (-0.75,-0.2) -- (-0.75, -0.05);
        \draw[->,thick][black] (-0.75,0) -- (-0.25,0.5);
        \draw[-,thick][black]  (-0.25,0.5) -- (0.25,1);

        \draw[->,thick][black] (-0.075, -0.675) -- (-0.075,0.2);
        \draw[-,thick][black] (-0.075, 0.2) -- (-0.075,0.675);

        \draw[black, fill = black] (-0.075,0.675) circle (0.02);
        \draw[black, fill = black] (-0.075,-0.675) circle (0.02);
        \draw (-0.075,0.8) node{$u_+$};
        \draw (-0.075,-0.825) node{$u_-$};

        \draw (0.35,0.825) node{$\Gamma^+_{\alpha + t_1}$};
        \draw (-0.4,0.825) node{$\Gamma^-_{\beta + t_2}$};
        \draw (-0.15,0.225) node{$\gamma$};

    \end{tikzpicture} 
    \caption{The figure depicts the contours $\Gamma_{\alpha + t_1}^+, \Gamma_{\beta + t_2}^-$ when they have two intersection points, denoted by $u_-$ and $u_+$. The contour $\gamma$ is the segment from $u_-$ to $u_+$.}
    \label{S12}
\end{figure}
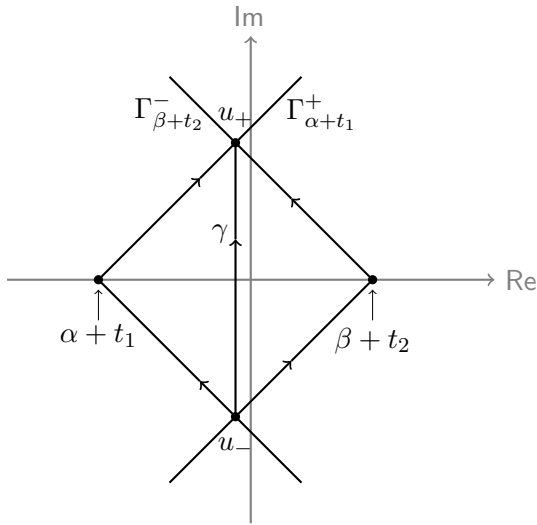

The following proposition summarizes some basic properties of the kernel $K_{a,b,c}$ in Definition \ref{3BPKernelDef}. It was proved as \cite[Lemma 1.4]{dimitrov2024airy} under the assumption that $c^+, c^- \geq 0$; however, the proof remains unchanged for all $c^+, c^- \in \mathbb{R}$, which is how we state the result below.
\begin{proposition}\label{WellDefKer} Assume the same notation as in Definition \ref{DLP}. For each $t_1, t_2,x_1,x_2 \in \mathbb{R}$ we have that the double integral in the definition of $K^3_{a,b,c}$ in (\ref{3BPKer}) is convergent. The value of $K_{a,b,c}(t_1, x_1; t_2, x_2) $ does not depend on the choice of $\alpha$ and $\beta$ as long as $\alpha + t_1 < \underline{a}$ and $\beta + t_2 > \underline{b}$. Moreover, for each fixed $t_1, t_2 \in \mathbb{R}$ we have that $K_{a,b,c}(t_1, \cdot; t_2, \cdot)$ is continuous in $(x_1, x_2) \in \mathbb{R}^2$.
\end{proposition}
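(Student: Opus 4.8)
The plan is to establish the three claims of Proposition~\ref{WellDefKer} in order: convergence of the double contour integral in $K^3_{a,b,c}$, independence of the value of $K_{a,b,c}$ from the choice of $\alpha,\beta$, and joint continuity in $(x_1,x_2)$. The key point throughout is that the integrand of $K^3_{a,b,c}$ has Gaussian-type decay in the imaginary directions of the contours $\Gamma^+_{\alpha+t_1}$ and $\Gamma^-_{\beta+t_2}$, coming from the cubic terms $e^{z^3/3}$ and $e^{-w^3/3}$.

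\textbf{Step 1: Convergence.} On $\Gamma^+_{\alpha+t_1}$ write $z=(\alpha+t_1)+re^{\pm\pi\im/4}$ with $r\ge 0$; then $\Real(z^3/3)\to -\infty$ at rate comparable to $-r^3/3$ as $r\to\infty$, since the leading term is $\Real((re^{\pm\pi\im/4})^3)/3 = (r^3/3)\cos(\pm 3\pi/4) = -r^3/(3\sqrt 2)$, and the lower-order corrections in $(\alpha+t_1)$ are at most quadratic in $r$. Similarly $\Real(-w^3/3)\to-\infty$ at cubic rate on $\Gamma^-_{\beta+t_2}$. The linear factors $e^{-x_1 z + x_2 w}$ grow only exponentially in $r$, hence are dominated. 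The factor $\Phi_{a,b,c}(z+t_1)/\Phi_{a,b,c}(w+t_2)$ is bounded on the tails of the contours: by the choice $\alpha+t_1<\underline a$ the contour $\Gamma^+_{\alpha+t_1}$ stays in the region of analyticity $\mathbb{C}\setminus[\underline a,\infty)$ of $\Phi_{a,b,c}$, away from all poles, and $|\Phi_{a,b,c}(z+t_1)|$ converges to a finite nonzero limit as $|z|\to\infty$ along the contour (the infinite product converges locally uniformly and each factor $\to 1$); likewise $1/\Phi_{a,b,c}(w+t_2)$ is controlled using $\beta+t_2>\underline b$ and analyticity of the inverse in $\mathbb{C}\setminus(-\infty,\underline b]$. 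Finally the denominator $z+t_1-w-t_2$ is bounded away from zero: since $z$ lies on a contour opening in the right half-plane direction and $w$ on one opening in the left, $z+t_1$ and $w+t_2$ are separated, so one checks $|z+t_1-w-t_2|$ is bounded below by a positive constant (this uses that the contours $\Gamma^+_{\alpha+t_1}$, $-\Gamma^-_{\beta+t_2}$ do not meet after the shift, or at worst their finitely many crossing points are handled by the principal-value/segment prescription of $\gamma$; along the tails they are uniformly separated). Combining, the integrand is bounded by $C\exp(-c(r_1^3+r_2^3))$ times polynomial factors, giving absolute convergence of the double integral; this also gives local uniformity in all parameters, which is needed later.

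\textbf{Step 2: Independence of $\alpha,\beta$.} Fix $\beta$ and vary $\alpha$ to $\alpha'$ with both $\alpha+t_1,\alpha'+t_1<\underline a$. The two contours $\Gamma^+_{\alpha+t_1}$ and $\Gamma^+_{\alpha'+t_1}$ are translates; the region between them is a bounded-width strip (a ``bowtie'' region) in which the $z$-integrand $e^{z^3/3-x_1 z}\Phi_{a,b,c}(z+t_1)/(z+t_1-w-t_2)$ is analytic in $z$ — it has no poles there because $\Phi_{a,b,c}(\cdot+t_1)$ is analytic on $\mathbb{C}\setminus[\underline a,\infty)$ and the pole at $z=w+t_2-t_1$ is avoided once $\beta+t_2>\underline b$ keeps $w+t_2$ to the left of the strip (again, finitely many exceptional $w$ are handled by deforming $w$ slightly first, which the $K^1$-segment correction accounts for). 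By Cauchy's theorem, closing the contour at the far ends of the bowtie — where the cubic decay makes the connecting arcs vanish — shows the $z$-integral is unchanged, uniformly in $w$; then integrate in $w$. The argument for varying $\beta$ is symmetric. (If one prefers to avoid the subtlety with crossing points altogether, one can note that $\alpha,\beta$ may always be decreased/increased so that $\Gamma^+_{\alpha+t_1}$ and $\Gamma^-_{\beta+t_2}$ are disjoint, verify independence there by the clean Cauchy argument, and then invoke \cite[Lemma 1.4]{dimitrov2024airy} for the equivalence with the other regime — but since the proposition as stated already appears in that reference for $c^{\pm}\ge 0$ and the proof is insensitive to the sign of $c^{\pm}$, the cleanest route is simply to recapitulate that proof verbatim.)

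\textbf{Step 3: Continuity in $(x_1,x_2)$.} Write $K_{a,b,c}=K^1+K^2+K^3$. The term $K^2_{a,b,c}$ is manifestly a continuous (indeed smooth) function of $(x_1,x_2)$ for fixed $t_1<t_2$, and identically $0$ otherwise. For $K^1_{a,b,c}$, the integrand is continuous in $(x_1,x_2,w)$ and the segment $\gamma$ is compact with length depending only on $t_1,t_2$ (not on $x_1,x_2$), so dominated convergence gives continuity. For $K^3_{a,b,c}$, by the bound from Step~1 the integrand is dominated by $C(|x_1|,|x_2|)\exp(-c(r_1^3+r_2^3))\cdot e^{|x_1|r_1+|x_2|r_2}$, which is locally uniformly integrable in $(x_1,x_2)$; since the integrand depends continuously on $(x_1,x_2)$ pointwise, dominated convergence yields continuity of $K^3_{a,b,c}(t_1,\cdot\,;t_2,\cdot)$.

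\textbf{Main obstacle.} The only genuinely delicate point is the lower bound on $|z+t_1-w-t_2|$ and the accompanying bookkeeping of the at-most-two crossing points of $\Gamma^+_{\alpha+t_1}$ and $\Gamma^-_{\beta+t_2}$ — i.e.\ correctly interpreting the $K^1$ segment-correction $\gamma$ so that the total kernel is well defined and the contour deformations in Step~2 are justified without the pole $z+t_1=w+t_2$ obstructing them. Everything else (the cubic decay estimates, dominated convergence) is routine once this is set up carefully; this is exactly the content handled in the proof of \cite[Lemma 1.4]{dimitrov2024airy}, and the extension to general $c^+,c^-\in\mathbb{R}$ changes nothing because $e^{c^+ z+c^-/z}$ contributes at most linear/exponential growth in $r$ along the contours, which is absorbed by the cubic terms regardless of the sign of $c^{\pm}$.
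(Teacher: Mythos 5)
The paper does not prove this proposition internally: it cites \cite[Lemma 1.4]{dimitrov2024airy} and observes that the proof there (given for $c^+,c^-\geq 0$) extends verbatim to $c^+,c^-\in\mathbb{R}$. Your proposal reconstructs the expected steepest-descent argument, and the overall skeleton --- cubic decay on the contours, Cauchy deformation for $\alpha,\beta$-independence, dominated convergence for continuity --- is the right one.

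There is, however, a genuine error in Step~1. You assert that $\Phi_{a,b,c}(z+t_1)/\Phi_{a,b,c}(w+t_2)$ is \emph{bounded} on the tails of the contours, and in particular that $|\Phi_{a,b,c}(z+t_1)|$ converges to a finite nonzero limit as $|z|\to\infty$ because ``each factor $\to 1$.'' Both claims are false in general. As $|z|\to\infty$ along $\Gamma^+_{\alpha+t_1}$, a factor $\frac{1+b_i^+z}{1-a_i^+z}$ tends to $-b_i^+/a_i^+$ (not $1$) when $a_i^+,b_i^+>0$, tends to $0$ when $b_i^+=0<a_i^+$, and tends to $\infty$ when $a_i^+=0<b_i^+$; moreover if $c^+\neq 0$ the prefactor $e^{c^+(z+t_1)}$ grows or decays exponentially along the contour. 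So $\Phi_{a,b,c}(z+t_1)$ need not be asymptotically constant, and the ratio $\Phi_{a,b,c}(z+t_1)/\Phi_{a,b,c}(w+t_2)$ need not be bounded on the tails. What is true, and what the argument actually needs, is an exponential-type bound $|\Phi_{a,b,c}(z+t_1)/\Phi_{a,b,c}(w+t_2)|\le \exp\left(C|z|+C|w|\right)$, obtained from the elementary inequalities (\ref{RatBound}) exactly as in (\ref{S22E1}) in Section~\ref{Section2.2} of the paper. That growth is still dominated by the cubic decay $\exp(-c(r_1^3+r_2^3))$, as you already observe for the linear factors $e^{-x_1z+x_2w}$, so the fix is immediate; but the boundedness claim as written would need to be replaced.

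One further point worth making explicit: when $\Gamma^+_{\alpha+t_1}$ and $\Gamma^-_{\beta+t_2}$ cross, $|z+t_1-w-t_2|$ is certainly \emph{not} bounded below by a positive constant, and absolute convergence of the double integral near the two crossing points relies on the observation (made in the paper near (\ref{Eq.SMResiV2})) that the singularity is locally of the form $(x^2+y^2)^{-1/2}$ in a two-real-dimensional parametrization, hence integrable. You gesture at this with the ``segment prescription'' remark, but the local integrability is the actual reason the double integral stays finite in the crossing regime, and it is needed already for Step~1, before one can discuss $K^1$.
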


The following result, which follows from \cite[Theorems 1.8 and 1.10]{dimitrov2024airy}, introduces the Airy wanderer line ensembles. 
\begin{proposition}\label{S12AWD} Assume the same notation as in Definition \ref{DLP} and fix $(a,b,c) \in \parF$. Then, there exists a unique line ensemble $\mathcal{A}^{a,b,c} = \{\mathcal{A}_i^{a,b,c}\}_{i \geq 1}$ on $\mathbb{R}$ such that all the following hold. For each $m \in \mathbb{N}$ and $s_1, \dots, s_m \in \mathbb{R}$ with $s_1 < s_2 < \cdots < s_m$ we have that the random measure
\begin{equation}\label{T2E1}
M(\omega, A) = \sum_{i \geq 1} \sum_{j = 1}^m {\bf 1}\left\{ (s_j, \mathcal{A}^{a,b,c}_i(\omega, s_j)) \in A \right\}
\end{equation}
is a determinantal point process on $\mathbb{R}^2$, with correlation kernel $K_{a,b,c}$ as in (\ref{3BPKer}), and reference measure $\mu_{\mathsf{S}} \times \mathrm{Leb}$. Here, $\mu_{\mathsf{S}}$ is the counting measure on $\mathsf{S} = \{s_1, \dots, s_m\}$ and $\mathrm{Leb}$ is the Lebesgue measure on $\mathbb{R}$. In addition, if we define the line ensemble $\mathcal{L}^{a,b,c}$ via
\begin{equation}\label{S1FDE}
\left(\sqrt{2} \cdot \mathcal{L}_i^{a, b, c}(t) + t^2: i \geq 1, t \in \mathbb{R} \right)  = \left(\mathcal{A}^{a,b,c}_i(t): i \geq 1 , t \in \mathbb{R} \right),
\end{equation}
then $\mathcal{L}^{a,b,c}$ satisfies the Brownian Gibbs property, see Definition \ref{Def.BGP}.
\end{proposition}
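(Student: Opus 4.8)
The plan is to realize $\mathcal{A}^{a,b,c}$ as the edge scaling limit of an explicit family of discrete line ensembles coming from inhomogeneous Schur processes, equivalently geometric last passage percolation, for which the determinantal structure and a (discrete) Gibbs property are both available, and then to transfer these two properties to the limit. Concretely, for each $N$ one chooses prelimit spike parameters so that, under the standard KPZ $1:2:3$ edge rescaling, they converge to the data $(a^{\pm}_i, b^{\pm}_i, c^{\pm})$ of Definition \ref{DLP}. For fixed $N$ the level lines of the Schur process form a determinantal point process whose correlation kernel is the Borodin--P\'ech\'e double contour integral of \cite{BP08}, obtained from the Eynard--Mehta theorem; linearly interpolating the level lines yields a discrete line ensemble $\mathcal{L}^{N}$ which has this determinantal structure at integer times and satisfies a random-walk Gibbs property with respect to independent geometric walks conditioned to stay ordered. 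Applying the affine change \eqref{S1FDE} produces the rescaled ensembles $\mathcal{A}^{N}$.

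The next step is tightness of $\{\mathcal{A}^{N}\}_{N \ge 1}$ in $C(\mathbb{N}\times\mathbb{R})$, which I would import from \cite{dimitrov2024tightness}: the inputs are one-point upper- and lower-tail estimates for the top curve --- available from the prelimit determinantal formulas via the BBP-kernel asymptotics --- combined with the random-walk Gibbs property, which propagates control of the top curve to all lower curves and supplies a uniform modulus of continuity on compacts. This gives subsequential weak limits. In parallel one proves convergence of the (conjugated, rescaled) prelimit kernels to $K_{a,b,c}$: uniformly for $t_1,t_2$ fixed and $(x_1,x_2)$ in a compact set, the geometric weights degenerate to the cubic exponentials $e^{z^3/3 - x_1 z}$, the finite products of prelimit spiking factors converge to $\Phi_{a,b,c}$ of \eqref{DefPhi}, and the contours $\Gamma^{+}_{\alpha+t_1}$, $\Gamma^{-}_{\beta+t_2}$ of Definition \ref{DefContInf} emerge as steepest-descent contours through the relevant critical points; the condition $\alpha+t_1<\underline{a}$, $\beta+t_2>\underline{b}$ is exactly what keeps the poles and zeros of $\Phi_{a,b,c}$ on the correct side of the contours during the deformation. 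A domination bound justifies passing the limit inside the double integral.

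Combining the two, any subsequential limit of $\mathcal{A}^{N}$ has, at times $s_1<\cdots<s_m$, the determinantal point process with kernel $K_{a,b,c}$ and reference measure $\mu_{\mathsf{S}}\times\mathrm{Leb}$ as in \eqref{T2E1}. Since a determinantal point process is determined by its correlation kernel, and since the almost sure strict ordering and continuity of the curves let one recover $\big(\mathcal{A}_i(s_j)\big)_{i\ge 1,\, j\le m}$ measurably from the point process, the finite-dimensional distributions of the limit are uniquely determined; with path continuity this pins down the law of the whole ensemble, giving both convergence of the full sequence and the asserted uniqueness of $\mathcal{A}^{a,b,c}$. For the last assertion, the Brownian Gibbs property of $\mathcal{L}^{a,b,c}$ follows by passing the discrete random-walk Gibbs property of $\mathcal{L}^{N}$ to the limit: conditioned geometric bridges converge to Brownian bridges, the non-crossing Radon--Nikodym weights converge to the Brownian avoidance weights of Definition \ref{Def.BGP}, and the tightness estimates make this convergence uniform enough to conclude that the limit resamples correctly on every finite index set and interval.

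The main obstacle is the kernel asymptotics in the presence of the infinite product $\Phi_{a,b,c}$: one must show the prelimit finite products converge to $\Phi_{a,b,c}$ with enough uniformity, locate and order the critical points of the exponent, and deform the prelimit contours to the fixed contours of Definition \ref{DefContInf} without ever crossing a pole or a zero --- which is genuinely delicate because there can be infinitely many spikes and $\underline{a},\underline{b}$ may be finite, so the analytic domain of $\Phi_{a,b,c}$ is constrained. The tightness step is also substantial, but can be cited essentially verbatim from \cite{dimitrov2024tightness}.
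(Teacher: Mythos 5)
The paper does not give a standalone proof of this proposition: it is stated as a direct consequence of \cite[Theorems 1.8 and 1.10]{dimitrov2024airy}, with Remark \ref{AWDRem1} handling the small extension to $c^+ < 0$ by a vertical shift of the ensemble $\mathcal{A}^{a,b,0}$. Your proposal reconstructs, at a reasonable level of detail, the strategy that underlies those cited results, and that reconstruction is consistent with how the present paper itself describes the construction (Section \ref{Section1.1}, Section \ref{Section3.5}, Proposition \ref{Prop.SchurToAiry}): weak limits of inhomogeneous Schur processes / geometric LPP, with tightness and propagation of the Gibbs property coming from \cite{dimitrov2024tightness} and determinantal finite-dimensional distributions coming from steepest-descent asymptotics of the prelimit kernel.

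Two minor imprecisions are worth flagging. First, you describe the prelimit Schur process kernel as ``the Borodin--P\'ech\'e double contour integral of \cite{BP08}''; in fact the BP08 kernel is $K_{a,b,c}$ itself, i.e.\ the \emph{limit}, while the prelimit correlation kernel is the discrete Schur process kernel (\`a la Okounkov--Reshetikhin), which converges to $K_{a,b,c}$ after conjugation and rescaling --- your own later sentence about ``convergence of the (conjugated, rescaled) prelimit kernels to $K_{a,b,c}$'' shows you are aware of the distinction, but the earlier sentence as written conflates the two. Second, your sketch does not mention the $c^+ < 0$ case, for which the cited theorems do not directly apply; the paper treats this by observing that the shift $\mathcal{A}^{a,b,0}_i(t) + c^+$ multiplies the kernel by the gauge factor $e^{v(t_2-t_1)}$ and hence preserves the claimed determinantal and Brownian Gibbs structure (see the proof of Proposition \ref{S13P1}(a)). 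Neither point undermines the overall argument.
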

\begin{remark}\label{AWDRem1} We mention that \cite[Theorems 1.8 and 1.10]{dimitrov2024airy} only define $\mathcal{A}^{a,b,c}$ when $c^+ \geq 0$. For general $c^+ \in \mathbb{R}$, one can construct such an ensemble by starting with $\mathcal{A}^{a,b,0}$ (i.e. an ensemble with the same $a_i^{\pm}, b_i^{\pm}$ parameters but with $c^+ = 0$), and then set $\mathcal{A}_i^{a,b,c}(t):= \mathcal{A}_i^{a,b,0}(t) + c^+$. The fact that the latter satisfies the conditions of the proposition follows by a simple change of variables -- this change of variables is explained in the proof of Proposition \ref{S13P1}(a) below.
\end{remark}
\begin{remark}\label{Rem.AWLE2} When all parameters $a_i^{\pm}, b_i^{\pm}$ and $c^{\pm}$ are equal to zero, the ensemble $\mathcal{A}^{0,0,0}$ is just the usual Airy line ensemble from \cite{CorHamA}, and $\mathcal{L}^{0,0,0}$ is the parabolic Airy line ensemble.
\end{remark}

%
%
\subsection{Some basic properties}\label{Section1.3} In this section we summarize some simple properties of the Airy wanderer line ensembles. Some of the statements in this section were previously established in \cite{dimitrov2024airy} for special cases. The results in this section are proved in Section \ref{Section2}.

\begin{proposition}\label{S13P1} Assume the same notation as in Definition \ref{DLP}, fix $(a,b,c) \in \parF$ and let $\mathcal{A}^{a,b,c}$ be as in Proposition \ref{S12AWD}. Then, the following statements all hold.
\begin{enumerate}
    \item[(a)] (Translation) Suppose that $v \in \mathbb{R}$. Then, the line ensemble 
    $$\bar{\mathcal{A}} :=\left(\mathcal{A}^{a,b,c}_i(t) + v : i \geq 1, t \in \mathbb{R}\right).$$ 
    has the same law as $\mathcal{A}^{\bar{a},\bar{b},\bar{c}}$, where $\bar{a}^{\pm}_i = a_i^{\pm}$, $\bar{b}^{\pm}_i = b^{\pm}_i$ for $i \geq 1$ and $\bar{c}^+ = c^+ + v$, $c^- = 0$.
    \item[(b)] (Reflection) The line ensemble $\cev{\mathcal{A}} :=\left(\mathcal{A}^{a,b,c}_i(-t) : i \geq 1, t \in \mathbb{R}\right)$ has the same law as $\mathcal{A}^{b,a,c}$, i.e. the Airy wanderer line ensemble whose ``$a$'' and ``$b$'' parameters have been swapped, but with the same parameters $c^{\pm}$. 
    \item[(c)] There exist $(\tilde{a}, \tilde{b}, \tilde{c}) \in \parP$ and $\Delta \in \mathbb{R}$, such that we have the following equality in law
    $$\left( \mathcal{A}^{\tilde{a},\tilde{b},\tilde{c}}_i(t-\Delta) + c^+: i \geq 1, t \in \mathbb{R} \right) \overset{d}{=}  \left( \mathcal{A}^{a,b,c}_i(t): i \geq 1, t \in \mathbb{R} \right).$$
\end{enumerate}
\end{proposition}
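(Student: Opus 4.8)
The plan is to prove (a), (b), (c) in order, since (c) will be obtained by composing the transformations in (a) and (b) with the explicit relation between $\mathcal{A}^{a,b,c}$ and $\mathcal{A}^{a,b,0}$ recorded in Remark \ref{AWDRem1}. In all three parts the strategy is the same: by Proposition \ref{S12AWD}, the law of $\mathcal{A}^{a,b,c}$ is characterized by the fact that its finite-dimensional point processes (over finitely many times $s_1 < \cdots < s_m$) are determinantal with kernel $K_{a,b,c}$. So it suffices to show that the corresponding point processes of the transformed ensemble $\bar{\mathcal{A}}$ (resp. $\cev{\mathcal{A}}$) are determinantal with kernel $K_{\bar a,\bar b,\bar c}$ (resp. $K_{b,a,c}$). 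Since a shift or reflection of the plane maps a determinantal process to a determinantal process with the conjugated kernel, this reduces to a kernel identity, which is then checked termwise against the three summands $K^1, K^2, K^3$ in \eqref{3BPKer}.

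For part (a), adding $v$ to every curve sends the point $(s_j, x)$ to $(s_j, x+v)$, so $\bar{\mathcal{A}}$ has determinantal kernel $\bar K(t_1,x_1;t_2,x_2) = K_{a,b,c}(t_1,x_1-v;t_2,x_2-v)$. I would verify that this equals $K_{\bar a,\bar b,\bar c}$ with $\bar c^+ = c^+ + v$: in $K^1$, the substitution $x_i \mapsto x_i - v$ produces the factor $e^{v(t_1 - t_2)} \cdot e^{w\cdot 0}$ after simplification — more carefully, one checks the exponent $x_1t_1 - x_2t_2 + w(x_2-x_1)$ shifts by $-v t_1 + v t_2 + wv - wv$... let me instead argue via a conjugation: determinantal kernels are only defined up to conjugation $K(t_1,x_1;t_2,x_2) \mapsto \tfrac{g(t_1,x_1)}{g(t_2,x_2)} K(t_1,x_1;t_2,x_2)$, and one absorbs the $v$-shift together with such a conjugation. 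In $K^3_{a,b,c}$, shifting $x_1,x_2$ and using $\Phi_{a,b,0}(z+t) = e^{-c^+ z - c^+ t}\Phi_{a,b,c}(z+t)\cdot e^{c^+ z + c^+ t}$... rather: the cleanest route is to note $\Phi_{\bar a,\bar b,\bar c}(z) = e^{vz}\Phi_{a,b,c}(z)$, so in $K^3$ the ratio $\Phi(z+t_1)/\Phi(w+t_2)$ picks up $e^{v(z+t_1-w-t_2)} = e^{v(z+t_1-w-t_2)}$, and the denominator $z+t_1-w-t_2$ plus the Gaussian/contour structure shows this shift is exactly compensated by translating the contours and relabeling — equivalently, by the change of variables $z \mapsto z$, $w\mapsto w$ together with the conjugating factor $e^{v t_1}/e^{v t_2}$ times $e^{-vx_1}/e^{-vx_2}$. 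The upshot is that $\bar K$ and $K_{\bar a,\bar b,\bar c}$ agree up to conjugation, hence define the same point process, and uniqueness in Proposition \ref{S12AWD} finishes (a). Remark \ref{AWDRem1} already flags that this change of variables is the content of the proof, which confirms the approach.

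For part (b), reflection sends $(s_j,x)$ to $(-s_j,x)$; writing $t_k' = -t_k$, the kernel of $\cev{\mathcal{A}}$ at times $t_1' < t_2'$ is $K_{a,b,c}(-t_1', x_1; -t_2', x_2)$ read with the roles of the two points possibly swapped (since reflection reverses the time order). In $K^1$ the change of variables $w \mapsto -w$ in the contour integral, together with $t_i \mapsto -t_i$, maps the integrand to the $(b,a,c)$-version; in $K^2$ the indicator $\mathbf{1}\{t_2 > t_1\}$ becomes $\mathbf{1}\{t_1' > t_2'\}$ which after swapping the two points is again $\mathbf{1}\{t_2' > t_1'\}$, and the Gaussian exponent is symmetric under the joint substitution; in $K^3$ one uses the reflection identity $\Phi_{a,b,c}(z) = \Phi_{b,a,c}(1/z)$ — which is immediate from \eqref{DefPhi}, swapping $a_i^\pm \leftrightarrow b_i^\pm$ corresponds to $z \leftrightarrow 1/z$ — combined with the substitution $z \mapsto -w$, $w \mapsto -z$ and the interchange of the contours $\Gamma^+ \leftrightarrow \Gamma^-$ (noting $\Gamma_a^-$ is the reflection of $\Gamma_{-a}^+$ across the imaginary axis). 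Tracking the Jacobians and the conjugating factor $e^{x_i t_i}$ pieces shows the reflected kernel equals $K_{b,a,c}$ up to conjugation. Part (c): given $(a,b,c) \in \parF$, first use Remark \ref{AWDRem1} to replace $c^+$ by $0$ at the cost of an additive $c^+$; one still needs to kill the remaining $b^-$, $a^-$ parameters (recall $c^- = 0$ already by definition of $\parF$) and land in $\parP$. Here I would invoke part (b) if $a^- = 0$ but $b^- \neq 0$ — reflection swaps $a^-\leftrightarrow b^-$ — but in general one needs a horizontal translation to absorb the finitely many nonzero $a_i^-, b_i^-$ into a shift of the Airy-type variable; the parabolic reparametrization \eqref{S1FDE} shows that translating $t$ by $\Delta$ conjugates $\Phi_{a,b,c}(z+t)$ to $\Phi_{a,b,c}(z+t+\Delta)$, and choosing $\Delta$ large enough (relative to $\underline a$) together with the finiteness $J_a^- + J_b^- < \infty$ lets one move the branch points so that the effective parameters become those of an element of $\parP$.

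The main obstacle I anticipate is bookkeeping in part (c): precisely identifying $(\tilde a, \tilde b, \tilde c) \in \parP$ and $\Delta$ so that the translated/reflected kernel matches $K_{\tilde a,\tilde b,\tilde c}$ exactly — the subtlety is that a horizontal shift of the Airy wanderer ensemble does not simply permute parameters (unlike the vertical shift and the reflection), so one must argue that the finitely many negative-side parameters are genuinely absorbable and verify the analyticity constraints $\alpha + t_1 < \underline a$, $\beta + t_2 > \underline b$ remain satisfiable after the shift. In parts (a) and (b) the only real care needed is to keep track of the conjugating prefactors (the $e^{x_i t_i - t_i^3/3}$ type terms) so that the termwise identities for $K^1, K^2, K^3$ assemble into a single conjugation of the full kernel $K_{a,b,c}$; this is routine but must be done consistently across all three summands.
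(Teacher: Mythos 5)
Your approach to parts (a) and (b) matches the paper's: both reduce to a kernel identity via a change of variables on the determinantal point process followed by a gauge (conjugation) transformation, and in (a) your observation that $\Phi_{\bar a,\bar b,\bar c}(z) = e^{vz}\Phi_{a,b,c}(z)$, combined with the conjugating factor $e^{vt_2-vt_1}$, is exactly what the paper does (equations (\ref{S211E3})--(\ref{S211E4})).

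In part (b), however, the key identity you quote is wrong. Swapping $a \leftrightarrow b$ in (\ref{DefPhi}) does \emph{not} correspond to $z \mapsto 1/z$; the involution $z \mapsto 1/z$ swaps the $+$ and $-$ superscripts (i.e.\ $a_i^{+}\leftrightarrow a_i^{-}$, $b_i^{+}\leftrightarrow b_i^{-}$, $c^{+}\leftrightarrow c^{-}$), which is a different symmetry. The identity you need, and the one the paper verifies, is $\Phi_{a,b,c}(u) = 1/\Phi_{b,a,c}(-u)$, so that the $a\leftrightarrow b$ swap corresponds to $z\mapsto -z$ together with a reciprocal. After the substitution $z\mapsto -z$, $w\mapsto -w$ in $K^3_{a,b,c}(-t_2,x_2;-t_1,x_1)$, one hits ratios of the form $\Phi_{a,b,c}(-z-t_2)/\Phi_{a,b,c}(-w-t_1)$, and it is this reciprocal-at-$(-u)$ relation that converts them to $\Phi_{b,a,c}(z+t_1)/\Phi_{b,a,c}(w+t_2)$. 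The identity $\Phi_{a,b,c}(z) = \Phi_{b,a,c}(1/z)$ is simply false (e.g.\ $a=b=0$, $c^+=1$, $c^-=0$ gives $e^z$ vs.\ $e^{1/z}$), and it does not help in the contour computation you set up.

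Part (c) is where you have a genuine gap. You correctly identify that a horizontal translation $\Delta$ must be used to absorb the finitely many nonzero $a_i^-, b_i^-$, but the construction is not just a shift: the paper (importing Step 2 of the proof of Theorem 1.8 in \cite{dimitrov2024airy}) first \emph{merges} $\{a_i^+\}$ with the reciprocals $\{1/a_i^-\}$ into a single decreasing sequence $\{\hat a_i^+\}$ (likewise for $b$), then applies a Möbius-type rescaling $\hat a_i^+ \mapsto \hat a_i^+/(1-\Delta\hat a_i^+)$, $\hat b_i^+ \mapsto \hat b_i^+/(1+\Delta\hat b_i^+)$, possibly augmented by $|J_a^- - J_b^-|$ extra parameters equal to $\pm\Delta^{-1}$. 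Moreover $\Delta$ is taken \emph{close to zero} (of a sign determined by $J_a^- - J_b^-$), not ``large enough'' as you suggest; taking $\Delta$ large would violate the constraints that keep the new parameters nonnegative and admissible. Without this explicit identification of $(\tilde a,\tilde b,\tilde c)$ and $\Delta$, your argument for (c) does not go through.
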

\begin{remark}\label{S13Rem1}
Let us briefly explain the content of Proposition \ref{S13P1}. The first two parts show that the family of Airy wanderer line ensembles is closed under vertical translations, as well as horizontal reflections across the origin. The third part of the proposition states that we can realize any $\mathcal{A}^{a,b,c}$ from Proposition \ref{S12AWD} as an appropriately translated Airy wanderer line ensemble with parameters in $\parP$. In order to simplify our exposition later in the paper, we will state some subsequent results for Airy wanderer ensembles under the additional assumption that their parameters lie in $\parP$. However, in view of the third part of Proposition \ref{S13P1}, one can transfer properties of such ensembles to all Airy wanderer line ensembles by merely translating them. We mention that the parameters $(\tilde{a}, \tilde{b}, \tilde{c})$, and $\Delta$ depend in a non-trivial way on $(a,b,c)$, and we forego stating it until the proof of the proposition in Section \ref{Section2.1}. 
\end{remark}

The next result shows that the laws of the Airy wanderer line ensembles are continuous in their parameters.
\begin{proposition}\label{S13P2} Assume the same notation as in Definition \ref{DLP} and Proposition \ref{S12AWD}. Fix a sequence $ (a_n, b_n, c_n) \in \parP$ such that $(a_n, b_n, c_n) \rightarrow (a,b,c) \in \parP$ in the sense that 
\begin{equation}\label{S13E1}
\lim_{n \rightarrow \infty} \sum_{i \geq 1} \left|a^+_i - a^{+}_{n,i}\right| = 0, \hspace{2mm} \lim_{n \rightarrow \infty} \sum_{i \geq 1}\left|b^+_i - b^{+}_{n,i}\right| = 0.
\end{equation}
We recall that $a_i^{-} = b_i^{-} = a_{n,i}^- = b_{n,i}^- = c^+ = c_n^+ = 0$ here. Then, we have $\mathcal{A}^{a_n,b_n,c_n} \Rightarrow \mathcal{A}^{a,b,c}$. 
\end{proposition}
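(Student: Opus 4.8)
The plan is to establish the convergence $\mathcal{A}^{a_n,b_n,c_n}\Rightarrow\mathcal{A}^{a,b,c}$ by combining (i) convergence of finite-dimensional distributions, which for these determinantal ensembles reduces to convergence of the correlation kernels, with (ii) tightness of the laws $\{\mathcal{A}^{a_n,b_n,c_n}\}_{n\ge1}$ in $C(\mathbb{N}\times\mathbb{R})$. Together these two ingredients force any subsequential limit to be a line ensemble whose finite-dimensional point-process marginals are determinantal with kernel $K_{a,b,c}$; by the uniqueness statement in Proposition \ref{S12AWD} that limit must be $\mathcal{A}^{a,b,c}$, and hence the full sequence converges.

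For step (i), I would fix $m\in\mathbb{N}$ and points $s_1<\cdots<s_m$ and show that $K_{a_n,b_n,c_n}(t_i,\cdot;t_j,\cdot)$ converges to $K_{a,b,c}(t_i,\cdot;t_j,\cdot)$, say uniformly on compact subsets of $\mathbb{R}^2$, together with a dominating bound on the kernels that is uniform in $n$ on compacts; standard determinantal-process machinery (convergence of Fredholm determinants / correlation functions against compactly supported test functions, as in the references used in \cite{dimitrov2024airy}) then yields convergence of the finite-dimensional laws of $\mathcal{A}^{a_n,b_n,c_n}$. The kernel has three pieces: $K^1$ and $K^2$ in (\ref{3BPKer}) do not depend on the parameters at all, so only $K^3_{a_n,b_n,c_n}$ matters, and there the only dependence on $(a_n,b_n,c_n)$ is through the ratio $\Phi_{a_n,b_n,c_n}(z+t_1)/\Phi_{a_n,b_n,c_n}(w+t_2)$. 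The hypothesis (\ref{S13E1}), namely $\ell^1$-convergence of the $a^+$ and $b^+$ sequences (with all other parameters zero), should give $\Phi_{a_n,b_n,0}\to\Phi_{a,b,0}$ uniformly on compact subsets of $\mathbb{C}\setminus[\underline{a},\infty)$ together with a uniform lower bound away from zero on the relevant contours; here one must be careful that $\underline{a}=1/a_1^+$ (and $\underline{b}=-1/b_1^+$) also converge, so that one may choose a single pair $\alpha,\beta$ with $\alpha+t_1<\underline{a}_n$ and $\beta+t_2>\underline{b}_n$ valid for all large $n$ (using $a^+_{n,1}\to a^+_1$, which follows from (\ref{S13E1}), and the fact that the kernel is independent of the admissible choice of $\alpha,\beta$ by Proposition \ref{WellDefKer}). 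Once $\Phi_{a_n,b_n,c_n}\to\Phi_{a,b,c}$ with a uniform bound on the contours, dominated convergence passes the limit inside the double contour integral defining $K^3$, using the Gaussian-type decay $e^{\Real(z^3/3)-\Real(w^3/3)}$ along $\Gamma^+_\alpha,\Gamma^-_\beta$ for integrability.

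For step (ii), tightness, I would like to invoke the tightness criterion for line ensembles from \cite{dimitrov2024tightness} (or the general framework in \cite{DEA21}): since each $\mathcal{L}^{a_n,b_n,c_n}$ (defined by the parabolic rescaling (\ref{S1FDE})) satisfies the Brownian Gibbs property, tightness of the whole sequence follows once one has uniform-in-$n$ control of the one-point marginals of the top curve on compact sets — i.e. no mass escaping to $\pm\infty$. Such control should come from the convergence of the one-point correlation functions $K_{a_n,b_n,c_n}(s_j,x;s_j,x)$ established in step (i), together with a uniform-in-$n$ tail bound on $\mathcal{A}^{a_n,b_n,c_n}_1(s_j)$; the latter can be extracted either from uniform upper/lower bounds on the kernels on the diagonal (giving control of $\mathbb{P}(\mathcal{A}^{a_n,b_n,c_n}_1(s_j)>M)$ via the first intensity, and a Gibbs-property/Brownian-bridge argument for the lower tail), or directly from the uniform estimates already present in \cite{dimitrov2024airy,dimitrov2024tightness} once one checks those estimates are uniform over the precompact set of parameters $\{(a_n,b_n,c_n)\}\cup\{(a,b,c)\}$.

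The main obstacle I anticipate is the uniformity in step (i): one needs not just pointwise convergence $\Phi_{a_n,b_n,c_n}\to\Phi_{a,b,c}$ but convergence that is uniform on the (unbounded) contours $\Gamma^+_{\alpha+t_1}$ and $\Gamma^-_{\beta+t_2}$, or at least good enough uniform bounds to justify dominated convergence there, and this is delicate precisely near the points $\underline{a}$ and $\underline{b}$, where $\Phi$ blows up and where the admissible windows for $\alpha,\beta$ can shrink as $n$ varies. Handling this carefully — choosing the contours so that they stay at a definite distance from the poles $\{(a^+_{n,i})^{-1}\}$ and zeros $\{-(b^+_{n,i})^{-1}\}$ uniformly in $n$, and bounding $|\Phi_{a_n,b_n,c_n}|$ and $|\Phi_{a_n,b_n,c_n}|^{-1}$ along them using the $\ell^1$ convergence of the parameter sequences — is where the real work lies; everything else is a fairly routine assembly of the determinantal-convergence-plus-tightness argument, using the uniqueness in Proposition \ref{S12AWD} to identify the limit.
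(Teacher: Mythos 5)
Your proposal is correct and follows essentially the same route as the paper: the paper reduces the statement to the convergence criterion of \cite[Lemma 8.1]{dimitrov2024airy}, which packages precisely your two ingredients --- (i) pointwise kernel convergence $K_{a_n,b_n,c_n}(s,x_n;t,y_n)\to K_{a,b,c}(s,x;t,y)$, verified by choosing $\alpha,\beta$ small enough (possible since $a_{n,1}^+\to a_1^+$, $b_{n,1}^+\to b_1^+$) so the contours stay uniformly away from all poles, bounding $\Phi_{a_n,b_n,c_n}$ on the contours via the $\ell^1$ estimates (\ref{RatBound}), and applying dominated convergence, and (ii) a uniform-in-$n$ tail bound on the top curve. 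One small simplification over your sketch: only the upper tail $\lim_{a\to\infty}\limsup_n\mathbb{P}(\mathcal{A}_1^{a_n,b_n,c_n}(t)\ge a)=0$ is required (the paper obtains it by applying Hadamard's inequality to the series expansion of the gap probability), so the separate Gibbs/Brownian-bridge argument for the lower tail that you propose is not needed.
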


%
%
\subsection{Main results}\label{Section1.4} In this section, we present the main results of the paper.  

Our first result shows that there are {\em multiple} ways (indexed by $A, B \in \mathbb{Z}_{\geq 0}$) to couple two Airy wanderer line ensembles with different sets of parameters $(a,b,c)$ and $(\tilde{a}, \tilde{b}, \tilde{c})$, that ensure that their appropriately reindexed curves are stochastically ordered. 
\begin{theorem}\label{Thm.MonCoupling} Assume the same notation as in Definition \ref{DLP} and Proposition \ref{S12AWD}. Fix $A, B \in \mathbb{Z}_{\geq 0}$ and two sets of parameters $(a,b,c), (\tilde{a}, \tilde{b}, \tilde{c}) \in \parP$, such that $a_{i + A}^+ \leq \tilde{a}_i^+$ and $b_{i+B}^+ \leq \tilde{b}_i^+$ for $i \geq 1$. Then, we can couple $\mathcal{A}^{a,b,c}$ and $\mathcal{A}^{\tilde{a},\tilde{b},\tilde{c}}$ on the same probability space, so that almost surely
\begin{equation}\label{Eq.MonCoupling1}
\mathcal{A}^{a,b,c}_{k + \max(A,B)}(t) \leq \mathcal{A}_k^{\tilde{a},\tilde{b},\tilde{c}}(t) \mbox{ for all } k \geq 1, t \in \mathbb{R}.
\end{equation}
\end{theorem}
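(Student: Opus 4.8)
The plan is to realize both Airy wanderer line ensembles as weak limits of finite systems of non-intersecting Brownian bridges — or equivalently of the geometric last passage percolation / Schur processes used in \cite{dimitrov2024airy} — for which an analogous monotone coupling can be constructed at the prelimit level, and then pass the coupling through the limit. More precisely, I would first reduce, using Proposition \ref{S12AWD} and the determinantal structure, to the statement that the finite-dimensional distributions of the two ensembles are dominated in the appropriate stochastic order. The key structural input is the Brownian Gibbs property from Proposition \ref{S12AWD}: after the parabolic shift (\ref{S1FDE}), both $\mathcal{L}^{a,b,c}$ and $\mathcal{L}^{\tilde a,\tilde b,\tilde c}$ are Brownian Gibbsian line ensembles, and one can exploit the standard monotonicity of non-intersecting Brownian bridges (the FKG/coupling lemmas developed for the Brownian Gibbs property, e.g. in the line-ensemble literature of \cite{CorHamA,DEA21}) to compare them on compact intervals once one controls the boundary data.

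The core of the argument is the prelimit coupling. For finite $N$, the Airy wanderer ensemble with parameters $(a,b,c)$ arises from a Schur process whose ``spike'' parameters are a finite truncation of the $a_i^+$'s and $b_i^+$'s. The hypothesis $a_{i+A}^+ \le \tilde a_i^+$ and $b_{i+B}^+ \le \tilde b_i^+$ says precisely that, after discarding the top $A$ (resp. $B$) spikes on the $a$- (resp. $b$-) side, the remaining spikes of the first model are dominated termwise by those of the second. At the level of non-intersecting walkers, increasing a spike parameter pushes the corresponding walker up in the stochastic order; discarding the $\max(A,B)$ largest spikes corresponds to deleting the top $\max(A,B)$ walkers, which shifts the indexing of the remaining curves by $\max(A,B)$. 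So one expects a monotone coupling of the prelimit systems in which curve $k+\max(A,B)$ of the $(a,b,c)$-system lies below curve $k$ of the $(\tilde a,\tilde b,\tilde c)$-system for every $k$. Concretely I would build this by a chain of elementary comparisons: (i) termwise increasing each surviving spike parameter, handled by the monotonicity of Schur measures / RSK under increasing one geometric parameter; (ii) the reindexing step, handled by a height-function or interlacing argument showing that deleting the top $m$ curves of a non-intersecting ensemble and shifting indices preserves stochastic order. Each step yields a coupling of consecutive models with the desired inequality; composing them (and using that stochastic domination for a sequence of measures converging weakly passes to the limit) gives the coupling of the two finite-$N$ ensembles.

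Finally I would take $N\to\infty$. By the construction in \cite{dimitrov2024airy}, the finite-$N$ ensembles (after the Airy-type rescaling) converge weakly to $\mathcal{A}^{a,b,c}$ and $\mathcal{A}^{\tilde a,\tilde b,\tilde c}$ respectively, with tightness established in \cite{dimitrov2024tightness}. Since the map $f \mapsto f$ preserves the closed partial order $\{(f,g): g_k(t)\ge f_{k+\max(A,B)}(t)\ \forall k,t\}$ on $C(\Sigma\times\mathbb R)$, and weak limits of couplings supported on a closed set are supported on that set (Strassen's theorem, or a direct tightness-plus-Skorokhod argument on the coupled pair), the prelimit monotone couplings converge along a subsequence to a coupling of $\mathcal{A}^{a,b,c}$ and $\mathcal{A}^{\tilde a,\tilde b,\tilde c}$ satisfying (\ref{Eq.MonCoupling1}) almost surely.

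The step I expect to be the main obstacle is the reindexing-by-$\max(A,B)$ part of the prelimit coupling: asserting that one may simultaneously \emph{delete} the top $A$ spikes on one side and the top $B$ spikes on the other (hence shift by the single index $\max(A,B)$ rather than by $A$ and $B$ separately) requires care, since $A$ and $B$ need not be equal and the $a$- and $b$-spikes act on opposite ends of the curves. Making this rigorous will likely require either working with the Brownian Gibbs property directly — repeatedly resampling and using monotonicity of non-intersecting bridges under raising/lowering individual entrance and exit points, together with a careful bookkeeping of which curves get absorbed — or a sufficiently flexible monotonicity statement for Schur processes under joint manipulation of the two families of specializations. A secondary technical point is ensuring the stochastic domination is uniform enough in $N$ to survive the limit, which should follow from the tightness in \cite{dimitrov2024tightness} but needs to be invoked carefully for the coupled pair rather than the marginals.
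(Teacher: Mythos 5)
Your high-level strategy — establish a monotone coupling at the prelimit Schur-process level and pass it through the weak limit via Skorohod's representation — is exactly what the paper does (Sections \ref{Section3.4}--\ref{Section3.6}). However, the crucial prelimit step, which you yourself flag as the main obstacle, is not resolved in your sketch, and the specific tool you reach for would in fact fail here. You propose to handle (i) termwise parameter increase via ``monotonicity of Schur measures / RSK under increasing one geometric parameter,'' and (ii) reindexing via ``a height-function or interlacing argument.'' The RSK/Greene-theorem coupling is exactly what was used in \cite[Proposition 3.7]{dimitrov2024airy}, and Remark \ref{Rem.MonCoup} of the present paper explicitly notes that that coupling only controls the partial sums $\sum_{i=1}^{k}\lambda_i^j$, not the individual parts $\lambda_k^j$. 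The curve-by-curve domination (\ref{Eq.MonCoupling1}) does not follow from the sums, so the RSK route cannot give the stated result even for $A=B=0$, let alone with the $\max(A,B)$ reindexing.

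The paper's actual mechanism is different: it uses Borodin's exact sampling algorithm for Schur processes (Section \ref{Section3.2}), which builds the ascending process by sequentially sampling each part from a truncated, shifted geometric distribution. The monotone coupling of Proposition \ref{MonCoup2} is then obtained by feeding a single array of i.i.d.\ uniform variables $U(n,m,k)$ through the two families of quantile functions. The key computation is Lemma \ref{S33Lem1}, an elementary monotonicity of the truncated-geometric quantile function in all three of its parameters $(a,b,q)$, proved by analyzing the sign of a discrete derivative. The reindexing by $\max(A,B)$ that you correctly identified as delicate is then handled with no extra analytic work: one simply defines $\tilde U(n,m,k) = U(n+A, m+B, k+\max(A,B))$, so that after an induction on $m+n$, the $k$-th part of the $\tilde\lambda$-system at position $(n,m)$ can be directly compared with the $(k+\max(A,B))$-th part of the $\lambda$-system at the shifted position $(n+A, m+B)$, and then pulled back to position $(n,m)$ via the monotonicity of the dynamics in $(n,m)$ (Remark \ref{S32Rem3}). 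Your instinct about where the difficulty lies was correct, but the resolution requires the push-block sampling viewpoint, which your proposal does not contain.
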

\begin{remark}\label{Rem.MonCoupling} In the simplest case when $A = B = 0$, Theorem \ref{Thm.MonCoupling} shows that the laws of $\mathcal{A}^{a,b,c}$ are monotone in the parameters $(a,b,c)$.
\end{remark}

Let us illustrate the content of Theorem \ref{Thm.MonCoupling} through a simple example.\\
{\em \raggedleft Example.} Suppose that the parameters $(a,b,c)$ satisfy $a_1^+ = 2$, $a_2^+ = 1$, $b_1^+ = 1$, and all other parameters are equal to zero. In addition, suppose that the parameters $(\tilde{a},\tilde{b},\tilde{c})$ satisfy $\tilde{a}^+_1 = 1$, and all other parameters are equal to zero. Since $a_i^+ \geq \tilde{a}_i^+$ and $b_i^+ \geq \tilde{b}_i^+$ for all $i \geq 1$, we have by Theorem \ref{Thm.MonCoupling} with $A = B = 0$, that we can couple $\mathcal{A}^{a,b,c}$ and $\mathcal{A}^{\tilde{a},\tilde{b},\tilde{c}}$, so that almost surely
$$\mathcal{A}^{a,b,c}_i(t) \geq \mathcal{A}^{\tilde{a},\tilde{b},\tilde{c}}_i(t) \mbox{ for all } i \geq 1, t \in \mathbb{R}.$$
On the other hand, since $a_{i+1}^+ \leq \tilde{a}_i^+$ and $b_{i+1}^+ \leq \tilde{b}_i^+$ for all $i \geq 1$, we have by Theorem \ref{Thm.MonCoupling} with $A = B = 1$, that we can couple $\mathcal{A}^{a,b,c}$ and $\mathcal{A}^{\tilde{a},\tilde{b},\tilde{c}}$, so that almost surely
$$\mathcal{A}^{a,b,c}_{i+1}(t) \leq \mathcal{A}^{\tilde{a},\tilde{b},\tilde{c}}_i(t) \mbox{ for all } i \geq 1, t \in \mathbb{R}.$$
We mention that the above two couplings are {\em different}. It would be interesting to see if one can construct a coupling where both inequalities simultaneously hold, although our methods presently do not allow us to do it.\\

The next result describes the global behavior of the points $\{\mathcal{A}_k^{a,b,c}(t)\}_{k \geq 1}$ as $t \rightarrow \pm \infty$. 
\begin{theorem}\label{Thm.Slopes} Assume the same notation as in Definition \ref{DLP} and Proposition \ref{S12AWD}. Fix $(a,b,c) \in \parP$ and any sequence $t_N > 0$ with $t_N \uparrow \infty$. Then, the following statements all hold.
\begin{enumerate}
\item[(a)] If $k \in \mathbb{N}$, $k \leq J_a^+$, then $t_N^{-1} \cdot (\mathcal{A}^{a,b,c}_k(t_N) - t_N^2) \Rightarrow -2/a_k^+$.
\item[(b)] If $k \in \mathbb{N}$, $k \leq J_b^+$, then $t_N^{-1} \cdot (\mathcal{A}^{a,b,c}_k(-t_N) - t_N^2) \Rightarrow -2/b_k^+$.
\item[(c)] If $J_a^+ < k <\infty$, then $\{\mathcal{A}^{a,b,c}_k(t_N) - t_N^2\}_{N \geq 1}$ is tight.
\item[(d)] If $J_b^+ < k <\infty$, then $\{\mathcal{A}^{a,b,c}_k(-t_N) - t_N^2\}_{N \geq 1}$ is tight.
\end{enumerate}
\end{theorem}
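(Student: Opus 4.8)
\textbf{Proof proposal for Theorem \ref{Thm.Slopes}.}

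The plan is to read off all four statements from the determinantal structure in Proposition \ref{S12AWD}, converting convergence/tightness of the point process $\{\mathcal{A}_k^{a,b,c}(t_N)\}_{k\ge 1}$ into asymptotics of the kernel $K_{a,b,c}(t_N,\cdot;t_N,\cdot)$ under the appropriate rescaling. Concretely, fix $t=t_N$ and consider the single-time process $\{\mathcal{A}_i^{a,b,c}(t)\}_{i\ge 1}$, which by Proposition \ref{S12AWD} is determinantal on $\mathbb{R}$ with kernel $K_{a,b,c}(t,x;t,y)$. Introduce the shift $x = t^2 + t\xi$ (so that $\xi = t^{-1}(x-t^2)$ is the rescaled height in parts (a), (b)) and the conjugated kernel $\widehat K_t(\xi,\eta) := t\cdot K_{a,b,c}(t,t^2+t\xi;t,t^2+t\eta)$, which describes the rescaled point process $\{t^{-1}(\mathcal{A}_i^{a,b,c}(t)-t^2)\}_{i\ge 1}$; for parts (c), (d) one instead uses the unscaled shift $x=t^2+\zeta$. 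The top curve of the rescaled ensemble concentrates at $-2/a_1^+$, the next at $-2/a_2^+$, etc., so the limiting point process should be the deterministic configuration $\{-2/a_k^+ : k\le J_a^+\}$ together with, in the scaling of part (c)/(d), a nontrivial point process near $-\infty$ (in the rescaled coordinate) accounting for the indices $k > J_a^+$. Statements (a) and (c) then follow from: (i) showing $\widehat K_t$ (resp. the unscaled shifted kernel) converges, on compacts and in trace norm, to a limiting kernel whose point process has exactly the claimed structure; and (ii) a standard argument that convergence of the correlation kernel of a determinantal process, plus control of the number of points to the right of any level, upgrades to convergence in distribution of the ordered points. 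Parts (b), (d) are identical after using the reflection symmetry of Proposition \ref{S13P1}(b), which swaps the $a$ and $b$ parameters and sends $t\mapsto -t$.

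The heart of the matter is the saddle-point / steepest-descent analysis of the three terms $K^1, K^2, K^3$ of (\ref{3BPKer}) at $t_1=t_2=t\to\infty$ with $x_j = t^2 + t\xi_j$ (or $t^2+\zeta_j$). The term $K^2$ is explicit and, with these substitutions, one checks it decays (the Gaussian factor and the exponential $e^{(t_2-t_1)^3/12}$ vanish at $t_1=t_2$, so in fact $K^2\equiv 0$ when $t_1=t_2$; the indicator forces $t_2>t_1$). The term $K^1$ vanishes for $t_1 = t_2$ as well, by the prescription $\gamma=\emptyset$ in Definition \ref{3BPKernelDef} when the two contours meet in at most one point — and at $t_1=t_2$ the contours $\Gamma^+_{\alpha+t}$ and $\Gamma^-_{\beta+t}$, with $\alpha+t<\underline a$ and $\beta+t>\underline b$ and $\alpha<\beta$, intersect in at most one point. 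So the entire asymptotic content lives in $K^3_{a,b,c}(t,t^2+t\xi;t,t^2+t\eta)$, a double contour integral in $z\in\Gamma^+_{\alpha+t}$, $w\in\Gamma^-_{\beta+t}$ of
\[
\frac{1}{(2\pi\im)^2}\,\frac{e^{z^3/3 - (t^2+t\xi)z - w^3/3 + (t^2+t\eta)w}}{z+t-w-t}\cdot\frac{\Phi_{a,b,c}(z+t)}{\Phi_{a,b,c}(w+t)}.
\]
After the change of variables $z\mapsto z-t$, $w\mapsto w-t$ (shifting the contours to $\Gamma^+_{\alpha}$, $\Gamma^-_{\beta}$ and making the $\Phi$-arguments simply $z, w$), the exponent becomes $(z-t)^3/3 - (t^2+t\xi)(z-t) - (w-t)^3/3 + (t^2+t\eta)(w-t)$. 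Expanding, the $t^3$ and $t^2 z$ terms cancel against the $(t^2+t\xi)(z-t)$ piece, leaving an exponent of the form $\tfrac13 z^3 - t z^2 + \big(\text{lower order in }z\big)$, and likewise in $w$. The dominant Gaussian-type factor $e^{-tz^2}$ (on the relevant branch of $\Gamma^+_\alpha$, $\Real(z^2)>0$) localizes $z$ near the pole of $\Phi_{a,b,c}$ closest to the contour, namely $z = 1/a_1^+$; more precisely, the pole at $1/a_k^+$ contributes a residue whose $\xi$-dependence is $e^{\text{const}\cdot(\xi + 2/a_k^+)\cdot t\cdot(\dots)}$, which survives in the limit only at $\xi = -2/a_k^+$, producing a delta-like concentration there. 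Carrying this out term by term over $k=1,\dots,J_a^+$, and separately estimating the ``remainder'' integral over the part of the contours away from all poles (which, after rescaling by $t$, collapses to the contribution governing indices $k>J_a^+$), yields the limiting kernel. For part (c), where no rescaling is applied to $x-t^2$, the poles $1/a_k^+$ with $k\le J_a^+$ escape to $+\infty$ in the rescaled picture and drop out, leaving a limiting determinantal process that is a finite shift/variant of the Airy point process (indeed one expects $\mathcal{A}^{a,b,c}_k(t)-t^2$ to converge in law to an Airy-type marginal, hence tightness); the cleanest route to \emph{just tightness} is to bound the expected number of points of $\{\mathcal{A}^{a,b,c}_i(t)\}_i$ above level $t^2 - R$ uniformly in $t$, via $\ex[\#] = \int_{t^2-R}^\infty K_{a,b,c}(t,x;t,x)\,dx$, and show this integral is bounded and small for $R$ large — a one-dimensional steepest-descent estimate on the diagonal kernel.

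The main obstacle is the uniform (in $t\to\infty$) control of the contour integrals: one must deform $\Gamma^+_\alpha$, $\Gamma^-_\beta$ to genuine steepest-descent contours for the $t$-dependent exponent $\tfrac13 z^3 - t z^2$ (whose saddle drifts to $\infty$ as $t\to\infty$), while simultaneously (i) keeping the contour on the correct side of, or correctly picking up residues at, the poles $\{1/a_k^+\}$ and zeros $\{-1/b_k^+\}$ of $\Phi_{a,b,c}$, whose number may be infinite, and (ii) keeping the distance between the two contours bounded below so that the $1/(z-w)$ factor is harmless. Handling the infinite product $\Phi_{a,b,c}$ uniformly along these moving contours — bounding $|\Phi_{a,b,c}(z)|$ and $|\Phi_{a,b,c}(w)|^{-1}$ and showing the tails $\sum_{i>M}(a_i^++b_i^+)$ contribute negligibly — is the technically delicate part, and I expect it to consume the bulk of the proof; it is of the same flavor as, and should be adaptable from, the kernel estimates already carried out in \cite{dimitrov2024airy, dimitrov2024tightness}. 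Once the pointwise and trace-norm convergence of the rescaled kernels is in hand, the passage to convergence/tightness of the ordered points is routine (e.g. via the gap probabilities $\pr(\text{no points in }(s,\infty)) = \det(I - \widehat K_t)_{L^2(s,\infty)}$ together with monotonicity of $k\mapsto \mathcal{A}_k^{a,b,c}(t)$).
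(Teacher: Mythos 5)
Your high-level strategy -- use the determinantal structure from Proposition~\ref{S12AWD}, note that $K^1 = K^2 \equiv 0$ when $t_1 = t_2$, and run a steepest-descent analysis on $K^3$ after the shift $z \mapsto z-t$, $w \mapsto w-t$ -- is exactly the starting point the paper takes in Section~\ref{Section4}. The reflection trick for parts (b), (d) is also the paper's route. But two things are genuinely missing, and one is a route the paper deliberately avoids.

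First, the real gap: you silently assume the poles of $\Phi_{a,b,c}$ are simple, which only holds when the $a_i^+$ (resp.\ $b_i^+$) are pairwise distinct. If some parameters repeat, the residue computation at $1/a_k^+$ changes qualitatively (higher-order poles, residues involving derivatives of the remaining integrand, etc.), and your ``delta-like concentration at $-2/a_k^+$'' argument does not go through unchanged. The paper handles this by proving the relevant moment estimates (Lemmas~\ref{Lem.FirstMoment}, \ref{Lem.SecondMoment}, \ref{Lem.FirstMomentFlat}) only for distinct parameters and then extending to general $(a,b,c) \in \parP$ via the monotone coupling of Theorem~\ref{Thm.MonCoupling}: sandwich $a_i^+$ between $e^{-\delta i} a_i^+$ and $2^{-i}\delta + a_i^+$, both of which give strictly decreasing sequences, run the two ensembles with the coupling, and squeeze. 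Without an idea like this, your proof is incomplete.

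Second, your part (c) argument -- bound $\ex[\#\{i : \mathcal{A}_i^{a,b,c}(t) \ge t^2 - R\}]$ and show it is uniformly small for $R$ large -- only gives tightness from above. You also need to rule out $\mathcal{A}_k^{a,b,c}(t_N) - t_N^2 \to -\infty$. The paper gets this for free from the coupling $\mathcal{A}_k^{a,b,c}(t) \ge \mathcal{A}_k^{0,0,0}(t)$ plus stationarity of $\mathcal{A}^{0,0,0}$; you have no analogous control.

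Finally, a point of comparison rather than a gap: you propose to establish trace-norm convergence of the rescaled kernel and then deduce convergence of the ordered points via Fredholm/gap-probability machinery. The paper uses a lighter tool: it shows only the first factorial moment $\ex[M^t[\hat\alpha,\infty)] \to k$ and the second factorial moment $\ex[M^t[\hat\alpha,\hat\beta)(M^t[\hat\alpha,\hat\beta)-1)] \to 0$ over suitable intervals, which already forces exactly one point per window in probability (a $\Z_{\ge 0}$-valued random variable with mean $\to 1$ and second factorial moment $\to 0$ converges to the constant $1$). This sidesteps any discussion of what the degenerate limiting kernel is, of trace-class convergence, and of gap probabilities for a limiting deterministic configuration. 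If you pursue your route, you will also have to grapple with the fact that the rescaled kernel converges to a (low-rank, non-Hermitian-looking) object for which the standard determinantal-to-ordered-points transfer theorems are not off-the-shelf; the paper's moment route avoids this entirely.
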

\begin{remark}\label{Rem.Slopes} In plain words, Theorem \ref{Thm.Slopes}(a) shows that if $a_k^+ > 0$, then as $t \rightarrow \infty$ the points $\mathcal{A}_k^{a,b,c}(t)$ approximately follow the parabola $t^2 - 2t/a_k^+$. On the other hand, Theorem \ref{Thm.Slopes}(c) shows that if $a_k^+ = 0$, then as $t \rightarrow \infty$ the points $\mathcal{A}_k^{a,b,c}(t)$ are of unit order. Parts (b) and (d) follow from parts (a) and (c) upon reflecting the ensembles across the origin, see Proposition \ref{S13P1}(b).
\end{remark}
\begin{remark}
If $\mathcal{L}^{a,b,c}$ is the parabolic Airy wanderer line ensemble from (\ref{S1FDE}), then Theorem \ref{Thm.MonCoupling}(a,b) shows that the curves $\mathcal{L}^{a,b,c}_k$ have slopes $-\sqrt{2}/a_k^+$ near $\infty$ for $k = 1, \dots, J_a^+$, and slopes $\sqrt{2}/b_k^+$ near $-\infty$ for $k = 1, \dots, J_b^+$. This asymptotic behavior for $\mathcal{L}^{a,b,c}$ was predicted in \cite{dimitrov2024airy}.
\end{remark}
\begin{remark}\label{Rem.Flat} Let us give a simple reason one might expect Theorem \ref{Thm.Slopes}(c,d). Suppose $J_a^+ = J_b^+ = K < \infty$, i.e. exactly $K$ parameters $a_i^+, b_i^+$ are non-zero. Then, by Theorem \ref{Thm.MonCoupling} we can create two couplings of $\mathcal{A}^{a,b,c}$ and $\mathcal{A}^{0,0,0}$ (the usual Airy line ensemble), so that for the first coupling
$$ \mathcal{A}^{a,b,c}_{K+i}(t) \leq \mathcal{A}^{0,0,0}_i(t) \mbox{ for } i \geq 1, t \in \mathbb{R},$$
while for the second coupling
$$ \mathcal{A}^{a,b,c}_{K+i}(t) \geq \mathcal{A}^{0,0,0}_{K+i}(t) \mbox{ for } i \geq 1, t \in \mathbb{R}.$$
Since $\mathcal{A}^{0,0,0}$ is stationary, the latter allows us to stochastically ``squeeze'' $\mathcal{A}^{a,b,c}_{K+i}(t)$ for all $t$ (in particular for large or small ones) between $\mathcal{A}^{0,0,0}_{K+i}(0)$ and $\mathcal{A}^{0,0,0}_{i}(0)$, ensuring tightness.
\end{remark}

We now turn to our last main result, which shows that the ensembles $\mathcal{L}^{a,b,c}$ from Proposition \ref{S12AWD} are extreme points in the space of line ensembles that satisfy the Brownian Gibbs property. Before we state it, we introduce a bit of notation, in particular explaining the content of the last sentence. We start with a simple formulation of the Brownian Gibbs property, introduced in \cite{CorHamA}. We refer the reader to Definition \ref{Def.BGP} for a precise statement, and to \cite[Section 2]{DEA21} and \cite[Section 2]{DimMat} for comprehensive treatments.

Suppose that $\mathcal{L}$ is a $\mathbb{N}$-indexed line ensemble on $\mathbb{R}$ as in Definition \ref{Def.LE}. We say that $\mathcal{L}$ satisfies the {\em Brownian Gibbs property}, if the ensemble is {\em non-intersecting}, i.e.
$$\mathcal{L}_i(t) > \mathcal{L}_{i+1}(t) \mbox{ for all } i \geq 1, \mbox{ and } t \in \mathbb{R},$$
and for each $m \in \mathbb{N}$ and $b > a$, the law of $\{\mathcal{L}_{i}\}_{i = 1}^{m}$ on the interval $[a,b]$, conditioned on $\{\mathcal{L}_{i}(a)\}_{i = 1}^{m}$, $\{\mathcal{L}_{i}(b)\}_{i = 1}^{m}$ and $\{\mathcal{L}_{m+1}(t): t \in [a,b]\}$, is that of $m$ independent Brownian bridges $\{{B}_{i} \}_{i=1}^{m}$ from ${B}_i(a) = \mathcal{L}_i(a)$ to ${B}_i(b) = \mathcal{L}_i(b)$ that are conditioned to not intersect:
$$B_1(t) > B_2(t) > \cdots > B_{m}(t) > \mathcal{L}_{m+1}(t)  \mbox{ for } t\in [a,b].$$

Let $\mathcal{P}$ be the space of probability measures on $C(\mathbb{N} \times \mathbb{R})$ with the weak topology. Sitting inside this space is the (closed and convex) space $\mathcal{G}$ of laws, which satisfy the Brownian Gibbs property. An {\em extreme point} in $\mathcal{G}$ is a measure $\mu$, such that if $\mu = \alpha \cdot \mu_1 + (1-\alpha) \cdot \mu_2$ for some $\mu_1, \mu_2 \in \mathcal{G}$ and $\alpha \in (0,1)$, then $\mu_1 = \mu_2 = \mu$. In other words, $\mu$ is extreme if it cannot be decomposed non-trivially into two measures in $\mathcal{G}$. Denote the set of extreme points of $\mathcal{G}$ by $\mathrm{Ext}(\mathcal{G})$. 

With the above notation in place, we are ready to state our final main result.
\begin{theorem}\label{Thm.Extreme} Assume the same notation as in Definition \ref{DLP}, Proposition \ref{S12AWD} and the preceding paragraphs. For any $(a,b,c) \in \parP$, we have that $\text{Law}\left(\mathcal{L}^{a,b,c}\right) \in \mathrm{Ext}(\mathcal{G})$. 
\end{theorem}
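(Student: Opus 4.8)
The plan is to characterize $\mathrm{Ext}(\mathcal{G})$ via a tail-triviality criterion and then verify that criterion for $\mathcal{L}^{a,b,c}$ using the determinantal structure from Proposition \ref{S12AWD}. First I would recall the standard fact from the theory of Gibbs measures: a measure $\mu \in \mathcal{G}$ is extreme if and only if it is trivial on the tail $\sigma$-algebra $\mathcal{T} = \bigcap_{n \geq 1} \mathcal{F}_n$, where $\mathcal{F}_n$ is the $\sigma$-algebra generated by the restriction of the ensemble to $\{(i,t) : i > n\} \cup \{(i,t) : i \leq n, |t| > n\}$ — that is, the data "outside" an increasing exhaustion of $\mathbb{N} \times \mathbb{R}$ by the rectangles $[1,n] \times [-n,n]$. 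This equivalence is a consequence of the Brownian Gibbs property being a local specification: the conditional law of $\{\mathcal{L}_i\}_{i=1}^m$ on $[a,b]$ given the boundary data is prescribed, so the argument that extreme points are exactly the tail-trivial ones goes through as in the classical Dobrushin theory (one direction: a nontrivial tail event gives a nontrivial convex decomposition by conditioning; the other: tail-triviality forces any component $\mu_i \ll \mu$ to equal $\mu$ by a martingale/backward-convergence argument). I would state this as a lemma, citing or adapting the treatment in \cite{DEA21} or the analogous statement for the Airy line ensemble.

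Next I would reduce tail-triviality to a statement about the determinantal point process. Since $\mathcal{L}^{a,b,c}$ is a deterministic parabolic shift and rescaling of $\mathcal{A}^{a,b,c}$ by \eqref{S1FDE}, it suffices to prove the corresponding tail-triviality for $\mathcal{A}^{a,b,c}$. The key input is that for any finite set of times, the point configuration $M$ in \eqref{T2E1} is a determinantal point process with kernel $K_{a,b,c}$. A determinantal point process with a Hermitian, locally trace-class correlation kernel whose associated operator $K$ satisfies $0 \leq K \leq 1$ is \emph{mixing} — and in particular tail-trivial — provided the operator $K$ has no eigenvalue equal to $1$ on any infinite-dimensional invariant subspace; more usefully, the Bufetov--Qiu--Shamov and Lyons--Steif type results give that such processes are tail-trivial whenever the kernel defines a projection or, more generally, under a "number rigidity fails in the tail" hypothesis. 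The cleanest route here: I would invoke the result (Osada--Shirai, Bufetov, or Lyons) that a determinantal point process associated to a locally trace-class operator $0 \le K \le 1$ on $L^2$ of a $\sigma$-finite space is always tail-trivial. This is a theorem of Lyons (and Osada--Shirai) with no extra hypothesis, so once I know $K_{a,b,c}$ induces such an operator — Hermiticity needs checking, which is where the three-term structure of \eqref{3BPKer} must be massaged into a genuinely self-adjoint form, possibly after a gauge transformation $K(t_1,x_1;t_2,x_2) \mapsto g(t_1,x_1) K(t_1,x_1;t_2,x_2) g(t_2,x_2)^{-1}$ as is standard for Airy-type kernels — the tail-triviality of the multi-time point process follows.

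The remaining gap is that tail-triviality of $M$ for each \emph{finite} collection of times does not immediately give tail-triviality of the full line ensemble $\mathcal{A}^{a,b,c}$ as a random element of $C(\mathbb{N}\times\mathbb{R})$: I need to pass from the point-process tail to the line-ensemble tail. Here I would use that the Brownian Gibbs property lets me reconstruct the curves on any compact rectangle from (i) the point-process data at finitely many times far from that rectangle together with the $(m{+}1)$-st curve, via the non-intersecting Brownian bridge resampling, and (ii) a monotone/sandwiching argument controlling the $(m{+}1)$-st curve using Theorem \ref{Thm.MonCoupling} and the known extremality of the plain Airy line ensemble $\mathcal{A}^{0,0,0}$. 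Concretely, a tail event for the line ensemble is measurable with respect to the curves restricted to the complement of large rectangles; by the Gibbs property and the strong Brownian bridge estimates (acceptance probabilities bounded below, as in \cite{CorHamA, DEA21}), such an event is arbitrarily well-approximated by events measurable with respect to the point process at finitely many far-away times, whose tail is trivial by the previous paragraph. Taking limits, the line-ensemble tail is trivial, hence $\text{Law}(\mathcal{L}^{a,b,c}) \in \mathrm{Ext}(\mathcal{G})$.

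\textbf{Main obstacle.} I expect the hard part to be the last step: rigorously transferring tail-triviality from the finite-time determinantal point processes to the continuous line ensemble. This requires a quantitative version of the statement that the curves on a compact set are determined, up to small error, by point-process data at finitely many distant times — i.e. a form of "spatial mixing" for the Brownian Gibbs field, most likely proved by controlling the influence of boundary data through non-intersecting Brownian bridge couplings and the parabolic decay of the curves established in Theorem \ref{Thm.Slopes}. Verifying the Hermitian (self-adjoint, $0\le K\le 1$) structure of $K_{a,b,c}$ needed to apply the abstract tail-triviality theorem for determinantal processes is a secondary technical point, but one that is essentially known for this family of kernels and should follow by an explicit conjugation together with the determinantal-process characterization already guaranteed by Proposition \ref{S12AWD}.
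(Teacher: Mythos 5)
Your proposal takes a genuinely different route from the paper, which proves extremality by a direct stochastic domination argument rather than via tail triviality. Given $\mu = \alpha\mu_1 + (1-\alpha)\mu_2$, the paper constructs auxiliary rescaled ensembles $\hat{\mathcal{L}}^{\delta,N}$ with slightly shrunk parameters and, combining the slope asymptotics of Theorem~\ref{Thm.Slopes}, the monotone couplings of Theorem~\ref{Thm.MonCoupling}, and the rigidity estimate of Lemma~\ref{Lem.Parabolicity}, shows that with high probability their boundary data on $[-N,N]$ lies below that of each $\mathcal{L}^j$. The Brownian Gibbs resampling plus the avoiding-bridge monotone coupling then gives that $\mu$ is stochastically dominated by each $\mu_j$, and since $\mu$ is a convex average of the $\mu_j$, equality of all three laws follows. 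No tail $\sigma$-algebra, and no abstract determinantal-process theory beyond the kernel asymptotics, is used.

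There is a genuine gap in your plan as written, and it is upstream of the transfer problem you flag. The abstract tail-triviality theorems you invoke (Lyons, Osada--Shirai, Bufetov--Qiu--Shamov) all require the correlation kernel to be, up to unitary equivalence, a self-adjoint locally trace-class operator with $0 \leq K \leq 1$. The multi-time kernel $K_{a,b,c}$ in \eqref{3BPKer} is not of this form: the term $K^2_{a,b,c}$ carries a factor ${\bf 1}\{t_2 > t_1\}$, so the kernel is time-asymmetric, $K_{a,b,c}(t_1,x_1;t_2,x_2)$ and $K_{a,b,c}(t_2,x_2;t_1,x_1)$ have structurally different formulas, and a gauge conjugation by a scalar $g(t,x)$ cannot repair this (such a conjugation would force the ratio of the two to factor as $g(t_2,x_2)^2/g(t_1,x_1)^2$, which it does not). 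This is the same non-Hermitian, $L$-ensemble/Markov-semigroup structure that the extended Airy kernel has, and it is not conjugate to a self-adjoint projection. Only the single-time restriction $K_{a,b,c}(t,\cdot;t,\cdot)$ is symmetric, and tail triviality of a one-time marginal says essentially nothing about the tail of the full line ensemble. A second, smaller gap: the equivalence ``extreme $\Leftrightarrow$ tail-trivial'' for the Brownian Gibbs specification on $\mathbb{N}\times\mathbb{R}$ is plausible but not off the shelf --- the exterior of the region $\{1,\dots,n\}\times(-n,n)$ includes the whole $(n{+}1)$-st curve, so this is not the classical product-space DLR setup and the Dynkin--Georgii--Preston machinery would need to be re-derived in this setting. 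So even before reaching the DPP-to-line-ensemble transfer step you identify, the proposal already relies on two unproved and, in the case of the Hermiticity step, likely false ingredients.
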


A major problem in the study of any family of Gibbs measures $\mathcal{M}$ is to understand the set of extreme points $\mathrm{Ext}(\mathcal{M})$. Part of the reason is that under general conditions, one can express any Gibbs measure as a {\em Choquet integral} over $\mathrm{Ext}(\mathcal{M})$, which sets up a one-to-one correspondence between $\mathcal{M}$ and probability measures on $\mathrm{Ext}(\mathcal{M})$. In other words, the elements of $\mathrm{Ext}(\mathcal{M})$ are the {\em simplest} Gibbs measures, and any measure in $\mathcal{M}$ is a convex combination of these. When $\mathcal{M} = \mathcal{G}$, Theorem \ref{Thm.Extreme} provides some insight about the nature of the set $\mathrm{Ext}(\mathcal{G})$, as it identifies a large (infinite parameter) family of elements in this set. 

There are still many questions we would like to address about the set $\mathrm{Ext}(\mathcal{G})$. It is certainly not the case that $\{\text{Law}\left(\mathcal{L}^{a,b,c}\right): (a,b,c) \in \parP\}$ exhausts $\mathrm{Ext}(\mathcal{G})$. Indeed, the Brownian Gibbs property is preserved under translations, affine shifts and Brownian scalings ($f(t) \mapsto \lambda^{-1} f(\lambda^2t)$ for $\lambda > 0$), see e.g. \cite[Lemma 2.1]{DS25} with $\vec{A} = 0$. Consequently, by Proposition \ref{S13P1}(c), at the very least $\{\text{Law}\left(\mathcal{L}^{a,b,c}\right): (a,b,c) \in \parF\}$ and all their affine shifts and Brownian scalings are contained in $\mathrm{Ext}(\mathcal{G})$. Whether the last containment is in fact an equality would be interesting to show, although we suspect the answer to be negative. Beyond this, we expect $\mathrm{Ext}(\mathcal{G})$ to be non-compact, which prevents the application of the usual Choquet theory to this set. Nevertheless, we expect some version of the correspondence between probability measures on $\mathrm{Ext}(\mathcal{G})$ and $\mathcal{G}$ to hold. We hope to address the various open-ended questions in this paragraph in the future.

%
%
\subsection{Key ideas and paper outline}\label{Section1.5} The rest of the paper is split into five sections. In Section \ref{Section2} we establish the basic properties from Section \ref{Section1.3}, with the proofs of Propositions \ref{S13P1} and \ref{S13P2} occupying Sections \ref{Section2.1} and \ref{Section2.2}, respectively. 

In Section \ref{Section3}, we establish the monotone coupling Theorem \ref{Thm.MonCoupling}. As mentioned in Section \ref{Section1.1}, in \cite{dimitrov2024airy, dimitrov2024tightness} we showed that the Airy wanderer line ensembles are weak limits of Schur processes --- the precise statement is recalled as Proposition \ref{Prop.SchurToAiry} in Section \ref{Section3.5}. The key observation here is that there are multiple ways to couple two Schur processes monotonically in their parameters --- see Proposition \ref{MonCoup2} in Section \ref{Section3.4}. These monotone couplings persist in the limit to the Airy line ensembles, which is how Theorem \ref{Thm.MonCoupling} is proved in Section \ref{Section3.6}. 

The way we prove our monotone coupling in Proposition \ref{MonCoup2} is by utilizing an exact sampling algorithm for Schur processes introduced by Borodin in \cite{B11}, which is recalled in Section \ref{Section3.2}. This algorithm is based on certain push-block Markovian dynamics on sequences of partitions that involve sequential update rules. The key idea behind the proof of Proposition \ref{MonCoup2} is that we can couple these dynamics monotonically in the parameters of the model by feeding the same background noise that drives the dynamics through certain parameter-dependent quantile functions. These quantile functions correspond to truncated and shifted geometric variables, and the core computation is that the action of these functions is monotone in their parameters --- see Lemma \ref{S33Lem1} in Section \ref{Section3.3}.

In Section \ref{Section4}, we introduce two random measures corresponding to the two scalings in parts (a) and (c) of Theorem \ref{Thm.Slopes}. We show that the first and second moments of these measures converge in the large-time limit; see Lemmas \ref{Lem.FirstMoment}, \ref{Lem.SecondMoment} and \ref{Lem.FirstMomentFlat}. The proofs of these lemmas rely on the determinantal point process structure of the Airy wanderer line ensembles described in Proposition \ref{S12AWD}, together with a detailed asymptotic analysis of the correlation kernel $K_{a,b,c}$ from Definition \ref{3BPKernelDef} via the method of steepest descent. 

Lemmas \ref{Lem.FirstMoment}, \ref{Lem.SecondMoment} and \ref{Lem.FirstMomentFlat} are then used in Section \ref{Section5} to establish the asymptotic slopes theorem, Theorem \ref{Thm.Slopes}. A key simplifying strategy we use is that these lemmas are proved under the assumption that the parameters of the model are all distinct. This assumption leads to a simpler asymptotic analysis, since it ensures that all the poles of the integrands in $K_{a,b,c}$ are simple, but it restricts subsequent arguments to a subset of the parameter space. Nevertheless, this subset is sufficiently large that we can ``fill in the gaps'' using the monotone coupling Theorem \ref{Thm.MonCoupling}, and thereby prove Theorem \ref{Thm.Slopes} for all parameters.

In the last section, Section \ref{Section6}, we establish the extremality of the Airy wanderer line ensembles from Theorem \ref{Thm.Extreme}. The proof is split into two parts, occupying Sections \ref{Section6.2} and \ref{Section6.3}. Below we outline some of the key ideas behind the proof. Additional details can be found in the outline at the end of Section \ref{Section6.2}, which contains a high-level explanation of the second part of the proof.\\  

The setup of the theorem provides a parabolic Airy wanderer line ensemble $\mathcal{L}^{a,b,c}$ as in (\ref{S1FDE}), whose law we denote by $\mu$, a parameter $\alpha \in (0,1)$, and two Brownian Gibbsian ensembles $\mathcal{L}^1$ and $\mathcal{L}^2$ with respective laws $\mu_1$ and $\mu_2$, satisfying 
$$\mu = \alpha \mu_1 + (1-\alpha) \mu_2.$$ 
Our goal is to show $\mu = \mu_1 = \mu_2$. 

The main idea is to construct a sequence of appropriately rescaled Airy wanderer ensembles, denoted $\hat{\mathcal{L}}^{\delta, N}$, which are typically below $\mathcal{L}^{a,b,c}$, and hence below both $\mathcal{L}^1$ and $\mathcal{L}^2$. These ensembles are related to $\mathcal{L}^{a,b,c}$ as follows:
\begin{itemize}
    \item One defines $(\hat{a},\hat{b},\hat{c})$ by starting with $(a,b,c) \in \parP$, setting $a_i^+ = b_j^+ = 0$ for $i \geq L+1, j \geq R+1$, and then slightly decreasing the remaining non-zero parameters using a small parameter $\delta >0$. 
    \item The curves $\hat{\mathcal{L}}^{\delta, N}_i$ are then defined by 
    $$\hat{\mathcal{L}}^{\delta, N}_i(t) = \lambda_N^{-1}\mathcal{L}^{\hat{a}, \hat{b}, \hat{c}}_i(\lambda_N^2 t) - r_N$$ 
    for suitable sequences $\lambda_N \rightarrow 1^+$ and $r_N \rightarrow 0^+$.
\end{itemize}

Using Theorem \ref{Thm.Slopes}(a,b), we compare the top few curves of $\mathcal{L}^{j}$ at times $\pm N$ with those of $\hat{\mathcal{L}}^{\delta, N}$, and conclude that the former are typically higher. Moreover, by carefully choosing $\lambda_N$, we alter the curvature of $\hat{\mathcal{L}}^{\delta, N}$ sufficiently to show that the endpoints $\{\mathcal{L}_i^{j}(\pm N)\}_{i = 1}^{N-1}$ are likely higher than $\{\hat{\mathcal{L}}^{\delta, N}_i(\pm N)\}_{i = 1}^{N-1}$. Here we also exploit Theorem \ref{Thm.Slopes}(c,d).

Next, using the monotone coupling from Theorem \ref{Thm.MonCoupling} together with rigidity estimates for the parabolic Airy line ensemble, we show that the $N$-th curve $\mathcal{L}^j_N$ is likely higher than a certain inverted parabola $g_N$. Using the same theorem and adjusting the $r_N$, we can ensure that the $N$-th curve $\hat{\mathcal{L}}^{\delta, N}_N$ is below $g_N$, and hence $\mathcal{L}^j_N$.

Combining these steps and invoking the Brownian Gibbs property, we conclude that $\hat{\mathcal{L}}^{\delta, N}$ can be coupled with $\mathcal{L}^j_N$ so that the latter is likely higher than the former. Taking the limits $N \rightarrow \infty$, $\delta \rightarrow 0^+$, and $L,R \rightarrow \infty$, we deduce that $\mathcal{L}^{a,b,c}$ is stochastically dominated by both $\mathcal{L}^1$ and $\mathcal{L}^2$. As the law $\mu$ of $\mathcal{L}^{a,b,c}$ is a convex combination of $\mu_1$ and $\mu_2$, this is only possible if $\mu = \mu_1 = \mu_2$.

%
%
\subsection*{Acknowledgments} The author would like to thank Amol Aggarwal, Alexei Borodin, and Ivan Corwin for many fruitful discussions. This work was partially supported by Simons Foundation International through Simons Award TSM-00014004.

%
%
\section{Basic properties of Airy wanderer line ensembles}\label{Section2} The goal of this section is to establish the two propositions in Section \ref{Section1.3}. Throughout this section we freely use the terminology and notation for determinantal point processes from \cite[Section 2]{dimitrov2024airy}.

%
%
\subsection{Proof of Proposition \ref{S13P1}}\label{Section2.1} We prove each of the parts separately.

%
%
\subsubsection{Proof of part (a)}\label{Section2.1.1} In \cite[Lemma 3.1]{DimMat} it was shown that the law of a line ensemble is uniquely determined by its finite-dimensional distributions. Consequently, it suffices to show that $\bar{\mathcal{A}}$ has the same finite-dimensional distribution as $\mathcal{A}^{\bar{a},\bar{b},\bar{c}}$. From \cite[Corollary 2.20]{dimitrov2024airy} and \cite[Proposition 2.13(3)]{dimitrov2024airy}, we see that it suffices to show that for any $s_1, \dots, s_m \in \mathbb{R}$ with $s_1 < s_2 < \cdots < s_m$, we have that the random measure
\begin{equation}\label{S211E1}
\bar{M}(A) = \sum_{i \geq 1} \sum_{j = 1}^m {\bf 1}\left\{ (s_j, \bar{\mathcal{A}}_i(s_j)) \in A \right\}
\end{equation}
is a determinantal point process on $\mathbb{R}^2$, with correlation kernel $K_{\bar{a},\bar{b},\bar{c}}$ as in (\ref{3BPKer}), and reference measure $\mu_{\mathsf{S}} \times \mathrm{Leb}$ as in Proposition \ref{S12AWD}. We next observe that $\bar{M} = M \phi^{-1}$, where 
\begin{equation}\label{S211E2}
M(A) = \sum_{i \geq 1} \sum_{j = 1}^m {\bf 1}\left\{ (s_j , \mathcal{A}^{a,b,c}_i(s_j)) \in A \right\}, \mbox{ and } \phi(s,x) = (s, x + v).
\end{equation}
From Proposition \ref{S12AWD}, we know that $M$ is a determinantal point process with correlation kernel $K_{a,b,c}$ and reference measure $\mu_{\mathsf{S}} \times \mathrm{Leb}$. Using the change of variables, cf. \cite[Proposition 2.13(5)]{dimitrov2024airy}, we conclude that $\bar{M}$ is a determinantal point process with correlation kernel
\begin{equation}\label{S211E3}
\bar{K}(t_1, x_1; t_2,x_2) = K_{a,b,c}(t_1, x_1 - v; t_2 , x_2 - v),
\end{equation}
and reference measure $(\mu_{\mathsf{S}} \times \mathrm{Leb})\phi^{-1} = \mu_{\mathsf{S} } \times \mathrm{Leb}$. We finally check directly from the kernel definition in Definition \ref{3BPKernelDef} that
\begin{equation}\label{S211E4}
 K_{a,b,c}(t_1, x_1 - v; t_2, x_2 - v) = e^{vt_2 - v t_1} \cdot K_{\bar{a},\bar{b},\bar{c}}(t_1, x_1; t_2, x_2).
\end{equation}
Equations (\ref{S211E3}) and (\ref{S211E4}) imply that $\bar{M}$ is determinantal with correlation kernel as the right side of (\ref{S211E4}), but then by a gauge transformation it is also determinantal with correlation kernel $K_{\bar{a},\bar{b},\bar{c}}$, cf. \cite[Proposition 2.13(4)]{dimitrov2024airy}. This suffices for the proof.

%
%
\subsubsection{Proof of part (b)}\label{Section2.1.2} Arguing as in the proof of part (a) above, we see that it suffices to show that for any $s_1, \dots, s_m \in \mathbb{R}$ with $s_1 < s_2 < \cdots < s_m$, we have that the random measure
\begin{equation}\label{S212E1}
\cev{M}(A) = \sum_{i \geq 1} \sum_{j = 1}^m {\bf 1}\left\{ (s_j, \mathcal{A}^{a,b,c}_i(-s_j)) \in A \right\}
\end{equation}
is a determinantal point process on $\mathbb{R}^2$, with correlation kernel $K_{b,a,c}$ as in (\ref{3BPKer}), and reference measure $\mu_{\mathsf{S}} \times \mathrm{Leb}$ as in Proposition \ref{S12AWD}. We next observe that $\cev{M} = M \phi^{-1}$, where 
\begin{equation}\label{S212E2}
M(A) = \sum_{i \geq 1} \sum_{j = 1}^m {\bf 1}\left\{ (-s_j , \mathcal{A}^{a,b,c}_i(-s_j)) \in A \right\}, \mbox{ and } \phi(s,x) = (-s, x).
\end{equation}
From Proposition \ref{S12AWD}, a change of variables, cf. \cite[Proposition 2.13(5)]{dimitrov2024airy}, and a kernel transposition, cf. \cite[Proposition 2.13(4)]{dimitrov2024airy}, we conclude that $\cev{M}$ is a determinantal point process on $\mathbb{R}^2$, with correlation kernel
\begin{equation}\label{S212E3}
\cev{K}(t_1, x_1; t_2,x_2) = K_{a,b,c}(-t_2 , x_2;-t_1, x_1),
\end{equation}
and reference measure $(\mu_{-\mathsf{S}} \times \mathrm{Leb})\phi^{-1} = \mu_{\mathsf{S}} \times \mathrm{Leb}$, where $- \mathsf{S} = \{-s_m, -s_{m-1}, \dots, -s_1\}$. To finish the proof, it suffices to check that 
\begin{equation}\label{S212E4}
 K_{a,b,c}(-t_2 , x_2;-t_1, x_1) = K_{b,a,c}(t_1, x_1; t_2, x_2).
\end{equation}

In the remainder we briefly explain why (\ref{S212E4}) holds. Fix $A < 0$ sufficiently small, and $B > 0$ sufficiently large, so that $A + |s| < 0$ and $B - |s| > 0$ for all $s \in \mathsf{S}$. On the left of (\ref{S212E4}) we use Definition \ref{3BPKernelDef} with $\alpha = A$, $\beta = B$, while on the right side we use Definition \ref{3BPKernelDef} with $\alpha = -B$, $\beta = -A$. With this choice, one directly verifies that 
$$K^i_{a,b,c}(-t_2 , x_2;-t_1, x_1) = K^i_{b,a,c}(t_1, x_1; t_2, x_2) \mbox{ for } i = 1,2,3.$$
We mention that when $i = 2$, the equality is directly verified using the third line in (\ref{3BPKer}). When $i = 1$ one needs to change variables $w \rightarrow -w$ in the left side to match it with the right. When $i = 3$, one needs to change variables $w \rightarrow -w$ and $z \rightarrow -z$ on the left side, and use the identity
$$\Phi_{a,b,c}(v+t) = \frac{1}{\Phi_{b,a,c}(-v-t)},$$
which is a direct consequence of the definition of $\Phi_{a,b,c}$ in (\ref{DefPhi}).

%
%
\subsubsection{Proof of part (c)}\label{Section2.1.3} As the proof of this statement essentially already appeared in the proof of \cite[Theorem 1.8]{dimitrov2024airy}, we will be brief. Let $\{\hat{a}^+_i\}_{i \geq 1}$ be the decreasing sequence formed by the terms in $\{a_i^+\}_{i \geq 1}$ (counted with multiplicities) as well as the terms $1/a_1^-, \dots, 1/ a_{J_a^-}^-$. Similarly, we let $\{\hat{b}^+_i\}_{i \geq 1}$ be the decreasing sequence formed by the terms in $\{b_i^+\}_{i \geq 1}$ (counted with multiplicities) as well as the terms $1/b_1^-, \dots, 1/ b_{J_b^-}^-$. 

We next fix $\Delta \in \mathbb{R}$ as follows. If $J_a^- = J_b^-$, we set $\Delta = 0$. If $J_a^- > J_b^-$, we let $\Delta > 0$ be close enough to zero so that $1 - \Delta a_1^+ > 0$. If $J_a^- < J_b^-$, we let $\Delta < 0$ be close enough to zero so that $1 + \Delta \hat{b}_1^+ >0$. We now define the parameters $\{\tilde{a}^{+}_i\}_{i \geq 1}$, $\{\tilde{b}^{+}_i\}_{i \geq 1}$, using $\{\hat{a}_i^+\}_{i \geq 1}$, $\{\hat{b}_i^+\}_{i \geq 1}$, and $\Delta$.  If $J_a^- = J_b^-$, we set $\tilde{a}_i^+ = \hat{a}_i^+$ and $\tilde{b}_i^+ = \hat{b}_i^+$ for $i \geq 1$. If $J_a^- > J_b^-$, we set $h = J_a^- - J_b^-$ and let $\{\tilde{b}_i^+\}_{i \geq 1}$ be the decreasing sequence formed by the terms in $\{ \hat{b}_i^+/ (1 + \Delta \hat{b}_i^+) \}_{i \geq 1}$ (counted with multiplicities) and $h$ copies of $\Delta^{-1}$, while $\tilde{a}^+_i = \hat{a}_i^+/ (1 - \Delta \hat{a}_i^+)$ for $i \geq 1$. If $J_b^- > J_a^-$, we set $h = J_b^- - J_a^-$ and let $\{\tilde{a}_i^+\}_{i \geq 1}$ be the decreasing sequence formed by the terms in $\{ \hat{a}_i^+/ (1 - \Delta \hat{a}_i^+) \}_{i \geq 1}$ (counted with multiplicities) and $h$ copies of $-\Delta^{-1}$, while $\tilde{b}^+_i = \hat{b}_i^+/ (1 + \Delta \hat{b}_i^+)$ for $i \geq 1$. 

Suppose that $\mathcal{A}^{\tilde{a}, \tilde{b}, \tilde{c}}$ is as in Proposition \ref{S12AWD} for the above choice of parameters and with $c^+ = 0$, in particular $a_i^- = b_i^- = 0$ for $i \geq 1$, i.e. $(\tilde{a}, \tilde{b}, \tilde{c}) \in \parP$. From Step 2 in the proof of \cite[Theorem 1.8]{dimitrov2024airy}, we have the following equality in law
$$\left( \mathcal{A}^{\tilde{a},\tilde{b},\tilde{c}}_i(t-\Delta) : i \geq 1, t \in \mathbb{R} \right) =  \left( \mathcal{A}^{a,b,0}_i(t): i \geq 1, t \in \mathbb{R} \right),$$
where $\mathcal{A}^{a,b,0}$ has the same parameters $(a,b,c)$ in the statement of the proposition but with $c^+ = 0$. The third statement of the proposition follows from the last equality and part (a) of the proposition.

%
%
\subsection{Proof of Proposition \ref{S13P2}}\label{Section2.2} Let us define $\mathcal{L}^n$ via 
$$\left(\sqrt{2} \cdot \mathcal{L}_i^{n}(t) + t^2: i \geq 1, t \in \mathbb{R} \right)  = \left(\mathcal{A}^{a_n,b_n,c_n}_i(t): i \geq 1 , t \in \mathbb{R} \right),$$
We seek to show that $\mathcal{L}^n$ satisfy the conditions of \cite[Lemma 8.1]{dimitrov2024airy}. If true, then we would conclude that $\mathcal{L}^n \Rightarrow \mathcal{L}^{a,b,c}$, where the latter is as in (\ref{S1FDE}) for the ensemble $\mathcal{A}^{a,b,c}$ as in the statement of the proposition. In particular, by the continuous mapping theorem we would get the result we desire.

The fact that $\mathcal{L}^n$ satisfy the Brownian Gibbs property follows from Proposition \ref{S12AWD}. Consequently, we just need to show that for fixed $s, t \in \mathbb{R}$ and sequences $x_n, y_n \in \mathbb{R}$ such that $\lim_n x_n = x$ and $\lim_n y_n = y$ we have
\begin{equation}\label{S81E1}
\lim_{n \rightarrow \infty} K_{a_n, b_n, c_n}(s, x_n; t, y_n) = K_{a,b,c}(s,x;t,y),
\end{equation}
and also that for each $t \in \mathbb{R}$ we have
\begin{equation}\label{S81E2}
\lim_{ a \rightarrow \infty} \limsup_{n \rightarrow \infty} \mathbb{P}\left(\mathcal{A}^{a_n,b_n,c_n}_1(t) \geq a\right) = 0.
\end{equation}
The proofs of (\ref{S81E1}) and (\ref{S81E2}) we present below are quite similar to that in Step 1 of the proof of \cite[Theorem 1.10]{dimitrov2024airy}, and so we will be brief.\\

We first fix $s,t \in \mathbb{R}$ and proceed to show (\ref{S81E1}). Below, we make use of the following inequalities:
\begin{equation}\label{RatBound}
|1 + z| \leq e^{|z|} \mbox{ for $z \in \mathbb{C}$, and } \frac{1}{|1-z|} \leq 1 + \frac{|z|}{|1-z|} \leq e^{|z|/d} \mbox{ for } z \in \mathbb{C}, |1- z| \geq d > 0.
\end{equation}
Fix $\alpha, \beta \in \mathbb{R}$ so that $s + \alpha > 0$, $t + \beta < 0$, and $a^+_{n,1} (s+ \alpha) < 1/2$, $-b^+_{n,1} (t + \beta) < 1/2$ for all $n \geq 1$. Note that if $z \in \Gamma_{\alpha}^+$, then $|1 - a_{n,i}^+(z+s)| \geq 1/4$, and if $w \in \Gamma_{\beta}^-$, then $|1 - b_{n,i}^+(w +t)| \geq 1/4$ from our choice of $\alpha, \beta$. In addition, from (\ref{S13E1}) we can find $A \in [0, \infty)$, such that $\sum_{i \geq 1} (a^+_{n,i} + b^+_{n,i}) \in [0, A]$ for all $n \geq 1$. Combining the last few observations and the definition of $\Phi_{a,b,c}$ in (\ref{DefPhi}), we conclude that we can find a constant $c_A$, depending on $A$ alone, such that for $z \in \Gamma_{\alpha}^+$, $w \in \Gamma_{\beta}^-$, we have
\begin{equation}\label{S22E1}
\left|\frac{\Phi_{a_n,b_n, c_n}(z + s)}{\Phi_{a_n,b_n, c_n}(w + t)} \right|  = \left|\prod_{i = 1}^{\infty} \frac{(1 + b_{n,i}^+ (z+s)) (1 - a_{n,i}^+ (w+t))}{(1 - a_{n,i}^+ (z+s)) (1 + b_{n,i}^+ (w+t)) } \right| \leq \exp\left(c_A|z+s| + c_A|w+t| \right). 
\end{equation}
We mention that in deriving the latter we used (\ref{RatBound}) with $d = 1/4$. 

In addition, by analyzing the real part of $z^3/3$ and $w^3/3$, we see that we can find a constant $D_1 > 0$ (depending on $\alpha, \beta$), such that for $z \in \Gamma_{\alpha}^+$, $w \in \Gamma_{\beta}^-$, we have
\begin{equation}\label{S22E2}
\begin{split}
\left|e^{z^3/3 - w^3/3} \right| \leq D_1 \cdot \exp \left(-|z|^3/12 - |w|^3/12 \right).
\end{split}
\end{equation}
Since by construction $\Gamma_{\alpha + s}^+$ is well-separated from $\Gamma_{\beta + t}^-$, we also have for some $D_2 > 0$ (depending on $\alpha, \beta,s ,t$)
\begin{equation}\label{S81Q5}
\left|\frac{1}{z + s - w - t} \right| \leq D_2.
\end{equation}
Finally, for a sequence $x_n$, and $y_n$ converging to $x$ and $y$ we have that for all large $n$
\begin{equation}\label{S81Q6}
\left|e^{-x_n z  + y_nw}\right| \leq \exp \left( (|x|+1)|z| + (|y|+1) |w| \right).
\end{equation}

We now observe that for $i = 1,2,3$ we have
\begin{equation}\label{S22E3}
\lim_{n \rightarrow \infty} K^i_{a_n, b_n, c_n}(s, x_n; t, y_n) = K^i_{a,b,c}(s,x;t,y).
\end{equation}
When $i = 1$, the latter is trivial as both sides are equal to zero (as $\gamma = \emptyset$ by construction). When $i = 2$, equation (\ref{S22E3}) holds by the continuity of the exponential functions on the third line of (\ref{3BPKer}) and the fact that $\lim_n x_n = x$, $\lim_n y_n = y$. When $i = 3$, we note that the integrands in the definition of $K^3_{a_n, b_n, c_n}(s, x_n; t, y_n)$ in the fourth line of (\ref{3BPKer}) converge to those of $ K^3_{a,b,c}(s,x;t,y)$ in view of the assumed convergence $(a_n, b_n, c_n) \rightarrow (a,b,c)$ in (\ref{S13E1}). The fact that the integrals themselves converge follows from the dominated convergence theorem, with dominating function given by the product of the right sides of (\ref{S22E1}), (\ref{S22E2}), (\ref{S81Q5}) and (\ref{S81Q6}). Equation (\ref{S22E3}) clearly implies (\ref{S81E1}).\\

In the remainder of this section we establish (\ref{S81E2}). Using that $\Real(z+t) > 0$ for all $z \in \Gamma_{\alpha}^+$ and $\Real(w+t) < 0$ for all $w \in \Gamma_{\beta}^-$, we see that there is a constant $d_1 > 0$ (depending on $\alpha, \beta, t$) such that for $z\in \Gamma_{\alpha}^+$ and $w \in \Gamma_{\beta}^-$ 
\begin{equation}\label{S81Q8}
\begin{split}
\left|e^{-x (z + t)} \right| \leq \begin{cases} e^{-d_1 x} &\mbox{ if } x \geq 0, \\ e^{|L||z + t|} &\mbox{ if } x \in [-L, 0], \end{cases} \hspace{3mm} \left|e^{(w + t)y} \right| \leq \begin{cases} e^{-d_1 y}, &\mbox{ if } y \geq 0, \\ e^{|L||w + t|} &\mbox{ if } y \in [-L, 0]. \end{cases}
\end{split}
\end{equation}
Combining (\ref{S22E1}), (\ref{S22E2}), (\ref{S81Q5}) and (\ref{S81Q8}), we see that we can find a constant $B_L$ (depending on $\alpha$, $\beta$, $t$, $A$, and $L$), such that for all large $n$ and $x, y \geq -L$
\begin{equation}\label{S81Q9}
\begin{split}
&\left|K_{a_n,b_n,c_n}(t,x;,t,y) \right| \leq B_L \cdot \exp\left( -d_1 |x| - d_1 |y| \right).
\end{split}
\end{equation}
In deriving the last identity we used that $K_{a_n,b_n,c_n} = K_{a_n,b_n,c_n}^3$ by our choice of $\alpha, \beta$.

If $V$ is an $n \times n$ matrix with complex entries and columns $v_1, \dots, v_n$, we have by Hadamard's inequality, see \cite[Corollary 33.2.1.1.]{Prasolov}, that 
\begin{equation}\label{Hadamard}
 |\det V| \leq \prod_{i = 1}^n \|v_i\|
\end{equation}
where $\|v\| = (|v_1|^2 + \cdots + |v_n|^2)^{1/2}$ for $v = (v_1, \dots, v_n) \in \mathbb{C}^n$. Consequently, for $x_1, \dots, x_n \geq -L$ 
\begin{equation*}
\left| \det\left[ K_{a_n,b_n,c_n}(t,x_i;t,x_j) \right]_{i,j = 1}^n \right| \leq B_L^n \cdot n^{n/2} \cdot \prod_{i = 1}^n \exp \left( - d_1 |x_i| \right),
\end{equation*}
which implies for any $a \in \mathbb{R}$
\begin{equation*}
1 + \sum_{n \geq 1} \frac{1}{n!} \int_{(a, \infty)^n} \left| \det\left[ K_{a_n,b_n,c_n}(t,x_i;t,x_j) \right]_{i,j = 1}^n \right| \lambda^n(dx) < \infty.
\end{equation*}

From \cite[Proposition 2.9 and Lemma 2.17]{dimitrov2024airy}, we conclude for all $a \in \mathbb{R}$
\begin{equation*}
\mathbb{P}\left(\mathcal{A}^{a_n,b_n,c_n}_1(t) \geq a\right)  = \sum_{n = 1}^{\infty} \frac{(-1)^{n-1}}{n!}  \int_{(a, \infty)^n}\det\left[ K_{a_n,b_n,c_n}(t,x_i;t,x_j) \right]_{i,j = 1}^n\lambda^n(dx). 
\end{equation*}
Combining the estimates after and including (\ref{S81Q9}), we get for all $a \geq 0$
\begin{equation*}
\mathbb{P}\left(\mathcal{A}^{a_n,b_n,c_n}_1(t) \geq a\right) \leq \sum_{n = 1}^{\infty} \frac{n^{n/2} B_0^n }{n!} \cdot \left(\int_{a}^{\infty} e^{-d_1 x} dx\right)^n \leq e^{-d_1a} \cdot \sum_{n = 1}^{\infty} \frac{n^{n/2} B_0^n }{n! \cdot d_1^n},
\end{equation*}
which proves (\ref{S81E2}).\\

%
%
\section{Monotone couplings}\label{Section3} The goal of this section is to establish Theorem \ref{Thm.MonCoupling}. In Section \ref{Section3.1} we introduce Schur processes, and in Section \ref{Section3.2} we describe a sampling algorithm for them based on \cite{B11}. Section \ref{Section3.3} develops several properties of quantile functions, which we then use in Section \ref{Section3.4} to show that certain Schur processes admit monotone couplings in their parameters, see Proposition \ref{MonCoup2}. In Section \ref{Section3.5} we recall from \cite{dimitrov2024tightness} that the Airy wanderer line ensembles can be obtained as weak limits of Schur processes, see Proposition \ref{Prop.SchurToAiry} for the precise statement. Finally, in Section \ref{Section3.6} we prove Theorem \ref{Thm.MonCoupling} by showing that the monotone couplings from Proposition \ref{MonCoup2} persist in the limit to the Airy wanderer line ensembles established in Proposition \ref{Prop.SchurToAiry}.

%
%
\subsection{Schur processes}\label{Section3.1} In this section we define the Schur processes that we work with in the present paper. Our exposition follows \cite[Section 3.1]{dimitrov2024airy}, which in turn goes back to \cite{BR05, OR03}. 

A {\em partition} is a sequence $\lambda = (\lambda_1, \lambda_2,\dots)$ of non-negative integers, called {\em parts}, such that $\lambda_1 \geq \lambda_2 \geq \cdots$ and all but finitely many elements are zero. We denote the set of all partitions by $\mathbb{Y}$. The {\em weight} of a partition $\lambda$ is given by $|\lambda| = \lambda_1 + \lambda_2 + \cdots$ . There is a single partition of weight $0$, which we denote by $\emptyset$. We say that two partitions $\lambda, \mu$ {\em interlace}, denoted $\lambda \succeq \mu$ or $\mu \preceq \lambda$, if $\lambda_1 \geq \mu_1 \geq \lambda_2 \geq \mu_2 \geq \cdots$. For two partitions $\lambda, \mu$, we define the (skew) Schur polynomial in a single variable by
\begin{equation}\label{S31E1}
s_{\lambda/\mu}(x) = {\bf 1}\{\lambda \succeq \mu\} \cdot x^{|\lambda| - |\mu|},
\end{equation}
and in several variables by
\begin{equation}\label{S31E2}
s_{\lambda/\mu}(x_1,\dots, x_n) = \sum_{\lambda^1, \dots, \lambda^{n-1} \in \mathbb{Y}} \prod_{i = 1}^n s_{\lambda^i/\lambda^{i-1}}(x_i),
\end{equation}
where $\lambda^0 = \mu$ and $\lambda^n = \lambda$. If $\mu = \emptyset$ in (\ref{S31E2}), we drop it from the notation and simply write $s_\lambda$ in place of $s_{\lambda/\emptyset}$. With the above notation in place we can define our measures.
\begin{definition}\label{SPD} Fix $M, N \in \mathbb{N}$ and suppose that $X = (x_1,\dots, x_M)$, $Y = (y_1, \dots, y_N)$ are such that $x_i, y_j \geq 0$ and $x_i y_j < 1$ for all $i = 1, \dots, M$, and $j = 1, \dots, N$. With this data we define the measure
\begin{equation}\label{SP}
\mathbb{P}_{X,Y} (\lambda^1, \dots, \lambda^{M}) = \prod_{i = 1}^M \prod_{j = 1}^N (1 - x_i y_j) \cdot \prod_{i = 1}^M s_{\lambda^i/ \lambda^{i-1}}(x_i) \cdot s_{\lambda^M}(y_1, \dots, y_M),
\end{equation}
where $\lambda^0 = \emptyset$ and $\lambda^i \in \mathbb{Y}$. 
We call the measure $\mathbb{P}_{X,Y}$ the {\em Schur process} with parameters $X,Y$. 
\end{definition}
\begin{remark}\label{S31Rem} Note that, in view of (\ref{S31E1}), $\mathbb{P}_{X,Y}$ is supported on sequences $(\lambda^1, \dots, \lambda^M)$ such that
$$\emptyset \preceq \lambda^1 \preceq \lambda^2 \preceq \cdots \preceq \lambda^{M-1} \preceq \lambda^M.$$ 
\end{remark}

%
%
\subsection{Schur dynamics}\label{Section3.2} In this section we describe a sampling algorithm for the Schur processes in Definition \ref{SPD}. This algorithm was introduced in \cite{B11}, and it is based on certain Markov dynamics on the Schur processes that involve sequential update. At a high level, the sampling algorithm allows us to sequentially build $(\lambda(n,1), \dots, \lambda(n,M))$, distributed according to $\mathbb{P}_{X,Y_n}$ for $n = 1, \dots, N$, where $Y_n = (y_1, \dots, y_n)$, by evolving the parts of $\lambda(n,j)$. This evolution of the parts is based on certain truncated geometric random variables, which we introduce next. 

\begin{definition}\label{GeomDistr} Fix $q \in [0,1)$, $a \in \mathbb{Z}$, and $b \in \mathbb{Z} \cup \{\infty\}$ such that $a \leq b$. With this data we define the probability mass function
\begin{equation}\label{S32DefP}
p(x|a,b,q) = \frac{q^x}{\sum_{y = a}^b q^y} \mbox{ for } x \in \mathbb{Z}, a \leq x \leq b, \mbox{ and } p(x|a,b,q) = 0 \mbox{ otherwise.}
\end{equation}
We also define the corresponding cumulative distribution function (cdf) by
\begin{equation}\label{S32DefF}
F(x|a,b,q) = \sum_{y \leq x} p(y|a,b,q) = \begin{cases} 0 &\mbox{ if } x < a \\ 1 & \mbox{ if } x \geq b \\ \frac{1 - q^{\lfloor x \rfloor - a + 1}}{1 - q^{b - a + 1}} &\mbox{ if } x \in (a,b). \end{cases}
\end{equation}
\end{definition}
\begin{remark}\label{S32Rem1} In words, the distribution $p(x|a,b,q)$ in Definition \ref{GeomDistr} is just a geometric distribution with parameter $q$, conditioned to be inside $[0,b -a]$, and translated by $a$. 
\end{remark}

We next introduce the the sampling algorithm.

{\bf \raggedleft Sampling algorithm.} Assume $M, N, X, Y$ are as in Definition \ref{SPD}. We proceed to sample $\{\lambda(n,m): m = 0, \dots, M, n = 0, \dots, N\}$ as follows. 
\begin{enumerate}
    \item[1.] {\em Initialization}. We set $\lambda(0,m) = \emptyset = \lambda(n, 0)$ for $m = 0, \dots, M$ and $n = 0, \dots, N$.
    \item[2.] {\em Update $n \rightarrow n+1$}. Assume that $(\lambda(n,1), \dots, \lambda(n,M))$ have already been sampled. We proceed to sequentially sample $\lambda(n+1,m)$ for $m = 1,\dots, M$ as follows.
    \item[3.] {\em Sampling $\lambda(n+1,m)$}. For $i \in \mathbb{N}$, we define $a_{i} = \max(\lambda_i(n+1, m-1), \lambda_i(n,m))$ and $b_i = \min(\lambda_{i-1}(n+1, m-1), \lambda_{i-1}(n,m))$, with the convention $b_1 = \infty$. Let $X_i(n+1,m)$ be independent random variables with distributions $p(\cdot|a_i, b_i, x_my_{n+1})$ as in Definition \ref{GeomDistr}, and set $\lambda_i(n+1,m) = X_i(n+1,m)$. 
\end{enumerate}
\begin{remark}\label{S32Rem2} By induction on $m + n$, one directly shows that $\lambda_{i}(n,m) = 0$ for all $i > \min(m,n)$. In particular, in the third step of the sampling algorithm we only need to generate finitely many variables $X_i(n+1,m)$ for $i = 1,\dots, \min(m,n)$, making the algorithm implementable on a computer. 
\end{remark}
\begin{remark}\label{S32Rem3} One directly observes from the above algorithm that if $0 \leq m_1 \leq m_2 \leq M$, and $0 \leq n_1 \leq n_2 \leq N$, then $\lambda(n_1,m_1) \subseteq \lambda (n_2, m_2)$ in the sense that $\lambda_i(m_1,n_1) \leq \lambda_i(m_2, n_2)$ for $i \geq 1$.  
\end{remark}

The key result we require about the above sampling algorithm is contained in the following proposition, which is a special case of \cite[Theorem 10 and Remark 12]{B11} (applied to the specializations in \cite[Example 9]{B11}). 
\begin{proposition}\label{DSP} Assume the same notation as in Definition \ref{SPD}. Suppose that $\{\lambda(n,m): m = 0, \dots, M, n = 0, \dots, N\}$ are as in the above sampling algorithm. Then, $(\lambda(N,1), \dots, \lambda(N,M))$ is distributed according to $\mathbb{P}_{X,Y}$. 
\end{proposition}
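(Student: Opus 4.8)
The plan is to recognize this statement as a known theorem of Borodin \cite{B11} and reduce it to that reference, after verifying that the dynamics described in the sampling algorithm coincide with the specialized push-block dynamics of \cite{B11}. First I would recall the general framework: in \cite{B11} one fixes a sequence of Schur-positive specializations and constructs Markov dynamics on sequences of interlacing partitions, indexed by a ``time'' coordinate $n$ and a ``space'' coordinate $m$, whose marginals at time $N$ are Schur processes. The key structural fact is that the single-variable Schur specialization $s_{\lambda/\mu}(x) = \mathbf{1}\{\lambda \succeq \mu\} x^{|\lambda|-|\mu|}$ generates, via the push-block construction, exactly the truncated-and-shifted geometric update described in step 3 of our algorithm: conditionally on $\lambda(n,m)$ and $\lambda(n+1,m-1)$, each new part $\lambda_i(n+1,m)$ is sampled from $p(\cdot \mid a_i, b_i, x_m y_{n+1})$ with $a_i, b_i$ the interlacing-enforced bounds. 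I would state explicitly that our algorithm is the instance of \cite[Theorem 10]{B11} obtained by taking all specializations to be the single-variable pure-alpha specializations with parameters $x_m$ along the $m$-axis and $y_n$ along the $n$-axis, as in \cite[Example 9]{B11}, and that \cite[Remark 12]{B11} guarantees the sequential (rather than parallel) update produces the same law.

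The substance of the argument, if one wanted to give it rather than merely cite it, is an induction on $n$. The inductive hypothesis is that $(\lambda(n,1),\dots,\lambda(n,M))$ has law $\mathbb{P}_{X,Y_n}$ where $Y_n = (y_1,\dots,y_n)$, with the convention $\mathbb{P}_{X,Y_0} = \delta_{(\emptyset,\dots,\emptyset)}$, matching the initialization in step 1. For the inductive step one analyzes the conditional law of $(\lambda(n+1,1),\dots,\lambda(n+1,M))$ given $(\lambda(n,1),\dots,\lambda(n,M))$ produced by the sequential sampling in steps 2--3. The computation that makes everything work is the elementary identity that, for fixed interlacing data, the product over $i$ of the normalizing constants $\sum_{y=a_i}^{b_i}(x_m y_{n+1})^y$ telescopes against the Schur branching structure, and that the skew Cauchy identity
\[
\sum_{\kappa} s_{\kappa/\nu}(x)\, s_{\kappa/\mu}(y) = \frac{1}{1-xy}\sum_{\rho} s_{\nu/\rho}(y)\, s_{\mu/\rho}(x)
\]
specialized to single variables is precisely what converts the two-step conditioning (first along space, incorporating $x_m$, then the coupling to the previous time slice, incorporating $y_{n+1}$) into the correct Schur-process weight with the extra variable $y_{n+1}$ appended. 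One checks that the new product $\prod_i \prod_{j\le n+1}(1-x_i y_j)$ emerges with the right normalization and that $s_{\lambda(n+1,M)}(y_1,\dots,y_{n+1})$ appears as required.

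The main obstacle — and the reason I would ultimately prefer to cite \cite{B11} directly rather than reprove it — is the bookkeeping in the inductive step: one must carefully track how the sequential updates along $m = 1,\dots,M$ at fixed $n+1$ interact, since the bounds $a_i, b_i$ for $\lambda(n+1,m)$ depend on the already-sampled $\lambda(n+1,m-1)$ as well as on $\lambda(n,m)$, and verifying that the composition of these conditional laws integrates to $\mathbb{P}_{X,Y_{n+1}}$ requires the commutation relation between the space-direction and time-direction transition operators that is the heart of \cite{B11}. Since this is established there in full generality, I would keep the proof of Proposition \ref{DSP} to a single paragraph: state that it is the specialization of \cite[Theorem 10 and Remark 12]{B11} to the specializations of \cite[Example 9]{B11}, note that the truncated geometric update of Definition \ref{GeomDistr} is exactly the push-block rule for single-variable Schur specializations, and invoke Remark \ref{S32Rem2} to observe that only finitely many coordinates are ever nontrivial so that all sums converge and the cited results apply verbatim.
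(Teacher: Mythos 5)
Your proposal matches the paper's treatment exactly: the paper does not reprove this statement but presents it as a special case of \cite[Theorem 10 and Remark 12]{B11} applied to the single-variable specializations of \cite[Example 9]{B11}, which is precisely the citation you identify and advocate. Your additional sketch of the underlying induction via the skew Cauchy identity is a correct account of what Borodin's argument does, but, as you yourself conclude, citing the reference is the right call and is what the paper does.
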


%
%
\subsection{Quantile functions}\label{Section3.3} Suppose that $F$ is a cdf on $\mathbb{R}$. We define the quantile function $Q$ associated with $F$ via
\begin{equation}\label{S3DefQ}
Q(u) = \inf \{ t : u \leq F(t) \} \mbox{ for all } u \in (0,1).
\end{equation}
The following proposition summarizes the properties we require about $Q$. Each statement below is either proved in \cite[Sections 11.4, 11a]{Pfeiffer} or follows immediately from statements there.

\begin{proposition}\label{QuantileP} Suppose that $F$ is a cdf on $\mathbb{R}$ and $Q$ is as in (\ref{S3DefQ}). Then, the following all hold.
\begin{enumerate}
    \item[P1.] The function $Q$ is increasing on $(0,1)$.
    \item[P2.] $Q(u) \leq t$ if and only if $u \leq F(t)$ for each $u \in (0,1)$ and $t \in \mathbb{R}$.
    \item[P3.] If $U$ is a uniform random variable on $(0,1)$, then $X = Q(U)$ has cdf $F$.
    \item[P4.] Let $F_1, F_2$ be two cdfs on $\mathbb{R}$, with associated quantile functions $Q_1, Q_2$. If $F_2(t) \leq F_1(t)$ for all $t \in \mathbb{R}$, then $Q_1(u) \leq Q_2(u)$ for all $u \in (0,1)$. 
\end{enumerate}
\end{proposition}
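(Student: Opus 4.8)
The plan is to verify each of the four properties directly from the definition of $Q$ in (\ref{S3DefQ}), using only that $F$ is non-decreasing, right-continuous, and satisfies $F(t) \to 0$ as $t \to -\infty$ and $F(t) \to 1$ as $t \to +\infty$. The bulk of the work is a single structural observation, after which everything is a short consequence.

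First I would record well-definedness together with the key fact that the defining infimum is attained. For $u \in (0,1)$ set $S_u := \{t \in \mathbb{R} : F(t) \geq u\}$. Then $S_u$ is non-empty (since $F(t) \to 1$) and bounded below (since $F(t) \to 0$), so $Q(u) = \inf S_u$ is a finite real number. Taking a sequence $s_n \downarrow Q(u)$ with $s_n \in S_u$ and using right-continuity of $F$ gives $F(Q(u)) = \lim_n F(s_n) \geq u$; combined with monotonicity of $F$ this shows $S_u = [Q(u),\infty)$, i.e. $Q(u) \in S_u$. This is the crux of the argument.

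With this in hand the rest is routine. For P1: if $u_1 \leq u_2$ then $S_{u_2} \subseteq S_{u_1}$, hence $Q(u_1) = \inf S_{u_1} \leq \inf S_{u_2} = Q(u_2)$. For P2: if $u \leq F(t)$ then $t \in S_u$, so $Q(u) \leq t$; conversely, if $Q(u) \leq t$ then, since $S_u = [Q(u),\infty)$ and $F$ is non-decreasing, $F(t) \geq F(Q(u)) \geq u$. For P3: for every $t \in \mathbb{R}$ we have $\mathbb{P}(Q(U) \leq t) = \mathbb{P}(U \leq F(t)) = F(t)$, where the first equality is P2 and the second uses that $U$ is uniform on $(0,1)$ and $F(t) \in [0,1]$; since this holds for all $t$, $X = Q(U)$ has cdf $F$. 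For P4: the hypothesis $F_2(t) \leq F_1(t)$ for all $t$ gives $S^{(2)}_u \subseteq S^{(1)}_u$ for every $u \in (0,1)$, whence $Q_1(u) = \inf S^{(1)}_u \leq \inf S^{(2)}_u = Q_2(u)$.

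I do not expect a serious obstacle here. The only point requiring any care is the use of right-continuity of $F$ to guarantee that $S_u$ is a \emph{closed} half-line $[Q(u),\infty)$ rather than an open one; this is precisely what makes the ``$Q(u) \leq t \Rightarrow u \leq F(t)$'' direction of P2 valid, and P3 and P4 then follow from P2 and from elementary set inclusions together with monotonicity. (One could alternatively cite the treatment in \cite[Sections 11.4, 11a]{Pfeiffer}, but the self-contained argument above is short enough to include if desired.)
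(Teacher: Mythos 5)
Your proof is correct, and it takes the standard route: the single nontrivial observation is that right-continuity of $F$ forces $S_u = \{t : F(t) \geq u\}$ to be a \emph{closed} half-line $[Q(u),\infty)$, after which P1--P4 all fall out of set inclusions and monotonicity. This is exactly the argument one finds in textbook treatments; the paper itself does not give a proof, instead deferring to the cited reference (Pfeiffer), so your write-up amounts to a self-contained version of what the paper outsources. One small remark for the record: in P1 ``increasing'' is to be read as non-decreasing --- the quantile function is constant on intervals corresponding to jumps of $F$ --- and that is what you prove, so there is no issue.
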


The key result we show in this section is that for certain choices of parameters, we can monotonically couple two distributions as in Definition \ref{GeomDistr}.
\begin{lemma}\label{S33Lem1} Suppose that $q_1, q_2 \in [0,1)$,  $a_1, a_2 \in \mathbb{Z}$, $b_1, b_2 \in \mathbb{Z} \cup \{\infty\}$ satisfy $a_1 \leq b_1$, $a_2 \leq b_2$, $q_1 \leq q_2$, $a_1 \leq a_2$, and $b_1 \leq b_2$. For $i = 1,2$ we let $F_i(x) = F(x|a_i,b_i, q_i)$ be the cdfs from (\ref{S32DefF}), and let $Q_i$ denote the corresponding quantile functions. Let $U$ be a uniform $(0,1)$ random variable and set $X_i = Q_i(U)$. Then, $X_i$ has distribution $F_i$ for $i =1,2$ and $X_1 \leq X_2$.
\end{lemma}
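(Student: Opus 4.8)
The plan is to verify the hypotheses of Proposition \ref{QuantileP}, parts P3 and P4, for the two cdfs $F_1$ and $F_2$, and then read off the conclusion. By P3, since $U$ is uniform on $(0,1)$, the variable $X_i = Q_i(U)$ has cdf $F_i$ for $i = 1, 2$; this gives the first claim immediately and requires no work. For the almost sure inequality $X_1 \leq X_2$, it suffices by P4 to show that $F_2(x) \leq F_1(x)$ for all $x \in \mathbb{R}$: then $Q_1(u) \leq Q_2(u)$ for all $u \in (0,1)$, and evaluating at $u = U$ gives $X_1 = Q_1(U) \leq Q_2(U) = X_2$ surely.

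So the entire content of the lemma reduces to the pointwise inequality $F(x \mid a_2, b_2, q_2) \leq F(x \mid a_1, b_1, q_1)$ for all real $x$, under the stated assumptions $q_1 \leq q_2$, $a_1 \leq a_2$, $b_1 \leq b_2$. I would prove this by a two-step monotonicity argument, changing one group of parameters at a time. First, holding $(a, b)$ fixed, I would show that $F(x \mid a, b, q)$ is nonincreasing in $q$; concretely, using the closed form in (\ref{S32DefF}) on the nontrivial range $x \in (a, b)$, the value is $\frac{1 - q^{\lfloor x\rfloor - a + 1}}{1 - q^{b - a + 1}}$, and one checks this is a decreasing function of $q \in [0,1)$ (intuitively: increasing $q$ shifts the truncated geometric mass toward larger values, i.e. stochastic dominance upward). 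Second, I would show that increasing $a$ and $b$ simultaneously (with $q$ fixed) can only decrease the cdf: shifting both endpoints of the support window to the right translates and reweights the mass rightward, again giving stochastic dominance. It is cleanest to handle this via Remark \ref{S32Rem1}'s description of $p(\cdot \mid a, b, q)$ as a geometric$(q)$ variable conditioned to $[0, b-a]$ then shifted by $a$: monotonicity in $q$ of a conditioned geometric, monotonicity of the conditioning window, and the trivial monotonicity under translation.

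Concretely, for the monotonicity in $q$ I would fix integers $a \le b$ and an integer $k$ with $0 \le k \le b - a$, and show that $\P(G \le k)$, where $G$ is geometric$(q)$ conditioned to lie in $[0, b-a]$, equals $\frac{1 - q^{k+1}}{1 - q^{b-a+1}}$ and is decreasing in $q$; differentiating in $q$ or comparing $\frac{1-q^{k+1}}{1-q^{m+1}}$ for $m = b-a$ at two values $q_1 < q_2$ reduces to the elementary inequality $(1 - q_1^{k+1})(1 - q_2^{m+1}) \ge (1 - q_2^{k+1})(1 - q_1^{m+1})$, i.e. $q_1^{k+1} + q_2^{m+1} \le q_2^{k+1} + q_1^{m+1}$, which follows since $t \mapsto t^{k+1} - t^{m+1}$ is nondecreasing on $[0,1]$ when $k \le m$. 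For the monotonicity in the window, I would compare the conditioned geometric on $[0, m_1]$ versus $[0, m_2]$ with $m_1 \le m_2$: enlarging the window only adds mass at the right end, so it stochastically dominates; then translating by $a_1$ versus $a_2 \ge a_1$ only helps further. Assembling: $F(x \mid a_2, b_2, q_2) \le F(x \mid a_2, b_2, q_1) \le F(x \mid a_2, b_2, q_1)$ with window enlarged appropriately $\le F(x \mid a_1, b_1, q_1)$, using at the last step that $a_1 \le a_2$ and $b_1 - a_1 \geq$ or $\leq b_2 - a_2$ needs care.

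The main obstacle I anticipate is precisely that last bookkeeping point: the hypotheses give $a_1 \le a_2$ and $b_1 \le b_2$ but say nothing about the comparison of the window widths $b_1 - a_1$ and $b_2 - a_2$, so I cannot simply chain "widen the window, then shift." The clean fix is to avoid decomposing into "widen" and "shift" and instead prove the inequality $F(x \mid a_2, b_2, q) \le F(x \mid a_1, b_1, q)$ directly for fixed $q$ by a coupling of the two conditioned-and-shifted geometrics, or by the stochastic-dominance characterization: show that for every threshold $t$, $\P(\text{window }[a_2,b_2]\text{-variable} > t) \ge \P(\text{window }[a_1,b_1]\text{-variable} > t)$, splitting into the cases $t < a_1$, $a_1 \le t < a_2$, and $t \ge a_2$ and using the explicit formula in each; the edge and possibly-infinite cases ($b_i = \infty$, or $x$ outside $(a_i, b_i)$) must be checked separately but are straightforward from (\ref{S32DefF}). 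Once the pointwise cdf comparison is in hand, the lemma closes by a one-line invocation of P4 and P3 as described above.
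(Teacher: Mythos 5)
Your reduction to the pointwise CDF inequality $F_2(x)\le F_1(x)$ via Proposition~\ref{QuantileP}(P3,~P4) is correct and matches the paper's opening move. But the execution breaks down in two places, one of which you flag and one of which you do not. The unflagged error is in the $q$-monotonicity step. You reduce $(1-q_1^{k+1})(1-q_2^{m+1}) \ge (1-q_2^{k+1})(1-q_1^{m+1})$ to ``$q_1^{k+1}+q_2^{m+1}\le q_2^{k+1}+q_1^{m+1}$,'' but expanding the products gives
\begin{equation*}
q_2^{k+1}+q_1^{m+1}-q_1^{k+1}-q_2^{m+1}+\bigl(q_1^{k+1}q_2^{m+1}-q_2^{k+1}q_1^{m+1}\bigr)\ge 0,
\end{equation*}
and you silently dropped the parenthesized cross-term. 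That term equals $q_1^{k+1}q_2^{k+1}\bigl(q_2^{m-k}-q_1^{m-k}\bigr)\ge 0$ and cannot be discarded: your reduced claim, equivalently $q_1^{k+1}-q_1^{m+1}\le q_2^{k+1}-q_2^{m+1}$, is \emph{false} in general. For $k=0$, $m=1$, $q_1=0.7$, $q_2=0.9$ one gets $0.7(1-0.7)=0.21>0.09=0.9(1-0.9)$, even though the full CDF inequality holds there ($0.3\cdot0.19=0.057\ge 0.051=0.1\cdot0.51$). Your justification is also wrong in itself: $t\mapsto t^{k+1}-t^{m+1}$ with $k\le m$ vanishes at both $t=0$ and $t=1$ and is strictly positive in between, so it is not nondecreasing on $[0,1]$. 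The second problem you identify yourself --- the ``widen then shift'' decomposition fails because nothing is assumed about $b_1-a_1$ versus $b_2-a_2$ --- but the proposed fix (coupling, or a case-split stochastic dominance check) is left as a sketch and never carried out.

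The paper sidesteps both traps by never separating the parameters. After clearing denominators it obtains a single polynomial $G(x\,|\,a_1,b_1,q_1,a_2,b_2,q_2)$, reduces by an explicit term-by-term comparison to the common-window case $G(x\,|\,a,b,q_1,a,b,q_2)$ (this handles the $(a,b)$ shift in one stroke, with no assumption about window widths), and then proves nonnegativity of the latter by showing the first difference $\Delta(x)=G(x)-G(x-1)$ changes sign at most once on $[a,b]$, so that $G$ is minimized at an endpoint $x\in\{a-1,b\}$, where a direct computation gives $G=0$. Crucially, this keeps the cross-terms in the calculation the whole time. If you want to salvage a two-step approach you would need to retain the cross-term in your $q$-step and supply an actual argument (e.g.\ a coupling of the two truncated geometrics) for the $(a,b)$-step; as written, the core inequality your proof rests on is not true.
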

\begin{proof} The fact that $X_i$ has distribution $F_i$ for $i = 1,2$ follows from property P3 in Proposition \ref{QuantileP}. From property P4 in Proposition \ref{QuantileP} and the fact that $F_i$ are constant on each interval $(a,a+1)$ for $a \in \mathbb{Z}$, we see that to show that $X_1 \leq X_2$, it suffices to prove that for each $x \in \mathbb{Z}$
\begin{equation}\label{S33E1}
F_2(x) \leq F_1(x).
\end{equation}
If $x < a_2$, we see that (\ref{S33E1}) trivially holds as the left side is $0$, and if $x \geq b_1$, then it trivially holds as the right side is $1$. We may thus assume that $ a_2 \leq x \leq b_1$, in which case from (\ref{S32DefF}) we see that (\ref{S33E1}) is equivalent to 
$$\frac{1 - q_2^{x - a_2 + 1}}{1 - q_2^{b_2 - a_2 + 1}} \leq   \frac{1 - q_1^{x - a_1 + 1}}{1 - q_1^{b_1 - a_1 + 1}}.$$ 
The latter is clear if $q_1 = 0$ (as then the right side is equal to $1$), and so we may assume $q_1 > 0$. Clearing denominators and multiplying both sides by $q_1^{a_1} q_2^{a_2}$, the above becomes equivalent to 
\begin{equation}\label{S33E2}
0 \leq G(x| a_1,b_1,q_1, a_2, b_2, q_2) \mbox{ for } a_2 \leq x \leq b_1 ,
\end{equation}
where 
\begin{equation*}
\begin{split}
G(x| a_1,b_1,q_1, a_2, b_2, q_2) = q_1^{a_1} q_2^{x+1} - q_1^{b_1+1} q_2^{x + 1} + q_1^{b_1 + 1} q_2^{a_2} - q_1^{x+1} q_2^{a_2} - q_1^{a_1}q_2^{b_2 +1} + q_1^{x+1} q_2^{b_2 + 1} .  
\end{split}
\end{equation*}

By a direct computation we have
\begin{equation*}
    \begin{split}
     &G(x| a_1,b_1,q_1, a_2, b_2, q_2) - G(x| a_2,b_1,q_1, a_2, b_1, q_2) ,\\
     & = (q_1^{a_1} - q_1^{a_2}) ( q_2^{x+1} - q_2^{b_2+1}) + (q_2^{b_1 +1} - q_2^{b_2 + 1}) (q_1^{a_2} - q_1^{x+1})  \geq 0,
    \end{split}
\end{equation*}
where in the last inequality we used that $q_1, q_2 \in (0,1)$, $a_1 \leq a_2 \leq x \leq b_1 \leq b_2$. In particular, we conclude that to show (\ref{S33E2}), it suffices to prove that for $a \leq x \leq b$ and $0 < q_1 \leq q_2 <1$, we have 
\begin{equation}\label{S33E3}
0 \leq G(x| a,b,q_1, a, b, q_2). 
\end{equation}
If $b = \infty$, we note that 
$$G(x| a,\infty,q_1, a, \infty, q_2) =  q_1^{a} q_2^{x+1} - q_1^{x+1} q_2^{a} = q_1^a q_2^{x+1} ( 1 - (q_1/q_2)^{x+1-a}),$$
which clearly implies (\ref{S33E3}), as $q_2 \geq q_1$, and so we may assume that $b < \infty$.

We finally observe that
$$\Delta(x) = G(x| a,b,q_1, a, b, q_2) - G(x-1|a,b,q_1,a,b,q_2) = (1-q_2)q_2^x (q_1^{b + 1} - q_1^{a}) + (1-q_1)q_1^x(q_2^{a} - q_2^{b+1}).  $$
The latter shows that $\Delta(x) \leq 0$ if and only if 
$$(1-q_2)q_2^x (q_1^{a} - q_1^{b + 1}) \geq  (1-q_1)q_1^x(q_2^{a} - q_2^{b+1}) \iff (q_2/q_1)^x \geq \frac{(1-q_1)(q_2^{a} - q_2^{b+1})}{(1-q_2)(q_1^{a} - q_1^{b + 1}) }.$$
As $q_2 \geq q_1$, we see that $(q_2/q_1)^x$ is increasing in $x$, and so we either have 
$$\Delta(x) \leq 0 \mbox{ for all } x = a, a+1, \dots, b \mbox{, or }$$
$$\Delta(x) > 0 \mbox{ for } x = a, a+1, \dots, x^* \mbox{ and } \Delta(x) \leq 0 \mbox{ for } x = x^* + 1, \dots, b,$$
for some  $x^* \in [a,b] \cap \mathbb{Z}$. In both cases, we see that $G(x| a,b,q_1, a, b, q_2)$ is minimal on $[a-1,b]$ when $x = a-1$, or $x = b$. Since by direct computation 
\begin{equation*}
G(a-1| a,b,q_1, a, b, q_2) = 0 = G(b| a,b,q_1, a, b, q_2) ,
\end{equation*}
we conclude (\ref{S33E3}) and hence the lemma.
\end{proof}

%
%
\subsection{Monotone couplings of Schur processes}\label{Section3.4} In this section we establish monotone couplings for the Schur processes in Definition \ref{SPD}. They are established by appropriately coupling the Schur dynamics in Section \ref{Section3.2} using Lemma \ref{S33Lem1}. The main result of the section is as follows.

\begin{proposition}\label{MonCoup2} Fix $M, N \in \mathbb{N}$, $A,B \in \mathbb{Z}_{\geq 0}$, and let 
\begin{equation}\label{S34E1}
\mathcal{P}_{M,N} = \{(\vec{x}, \vec{y}) \in [0,\infty)^M \times [0,\infty)^N: x_i y_j < 1 \mbox{ for } i = 1, \dots, M, j = 1, \dots, N\}.
\end{equation}
There is a probability space $(\Omega, \mathcal{F}, \mathbb{P})$ and two families of random sequences 
$$(\lambda^1[X,Y], \dots, \lambda^{M}[X,Y]), (\tilde{\lambda}^1[X,Y], \dots, \tilde{\lambda}^{M}[X,Y])  \in \mathbb{Y}^{M},$$ 
indexed by $(X,Y) \in \mathcal{P}_{M,N}$, so that the following hold. Under $\mathbb{P}$ the distribution of the sequences $(\lambda^1[X,Y], \dots, \lambda^{M}[X,Y])$ and $(\tilde{\lambda}^1[X,Y], \dots, \tilde{\lambda}^{M}[X,Y])$ is $\mathbb{P}_{X,Y}$ as in Definition \ref{SPD}. In addition, we have for each $(X,Y) \in \mathcal{P}_{M,N}, (\tilde{X}, \tilde{Y}) \in \mathcal{P}_{M,N}$, $\omega \in \Omega$, $k \in \mathbb{N}$, and $j \in \{1, \dots, M\}$ that
\begin{equation}\label{MonIneq2}
 \lambda_{k+\max(A,B)}^j[X,Y](\omega) \leq \tilde{\lambda}_k^j[\tilde{X}, \tilde{Y}](\omega),
\end{equation}
provided that $x_{i+B}y_{j+A} \leq \tilde{x}_i\tilde{y}_j$ for all $i = 1, \dots, M-B$ and $j = 1, \dots, N-A$.
\end{proposition}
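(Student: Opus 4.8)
\emph{Proof proposal.} The plan is to build both families from Borodin's sampling algorithm (Proposition~\ref{DSP}) by replacing the geometric draws with quantile transforms (Proposition~\ref{QuantileP}) of a common source of randomness, and to obtain the coupling by feeding the \emph{same} uniform into the two parameter-dependent quantile functions after an index shift that absorbs the $\max(A,B)$ appearing in (\ref{MonIneq2}). Write $P:=\max(A,B)$ and place on $(\Omega,\mathcal F,\mathbb P)$ two independent families of i.i.d.\ $\mathrm{Uniform}(0,1)$ variables $\{V_i(n,m)\}$ and $\{V'_i(n,m)\}$, indexed by $i\ge 1$ and $n,m\in\mathbb Z$. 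For $(\tilde X,\tilde Y)\in\mathcal P_{M,N}$ I define $\tilde\lambda(n,m)[\tilde X,\tilde Y]$ by running the algorithm of Section~\ref{Section3.2} but setting $\lambda_i(n,m)=Q\bigl(V_i(n,m)\bigr)$, where $Q$ is the quantile function (\ref{S3DefQ}) of $F(\,\cdot\mid a_i,b_i,\tilde x_m\tilde y_n)$ from (\ref{S32DefF}) and $a_i,b_i$ are the bounds prescribed by the algorithm; for $(X,Y)\in\mathcal P_{M,N}$ I define $\lambda(n,m)[X,Y]$ in the same way, but driven by $W_i(n,m):=V_{i-P}(n-A,m-B)$ when $i>P$ and $W_i(n,m):=V'_i(n,m)$ when $1\le i\le P$. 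Along the (finitely many) indices the algorithm actually consumes, $\{W_i(n,m)\}$ is again an i.i.d.\ uniform family (the entries drawn from $V$ carry distinct indices $(i-P,n-A,m-B)$, and $i>P$ is only consumed when $\min(n,m)\ge i>P$, so $n-A,m-B\ge1$), so by Proposition~\ref{DSP} and property P3 of Proposition~\ref{QuantileP} the sequences $(\lambda^j[X,Y])_{j=1}^M:=(\lambda(N,j)[X,Y])_{j=1}^M$ and $(\tilde\lambda^j[X,Y])_{j=1}^M:=(\tilde\lambda(N,j)[X,Y])_{j=1}^M$ both have law $\mathbb P_{X,Y}$.

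The core step is to show, under the hypothesis $x_{i+B}y_{j+A}\le\tilde x_i\tilde y_j$, that
\[
\lambda_{k+P}(n,m)[X,Y]\ \le\ \tilde\lambda_k(n-A,m-B)[\tilde X,\tilde Y]\qquad\text{for all }k\ge1,\ 0\le n\le N,\ 0\le m\le M,
\]
where a partition at a negative row or column index is read as $\emptyset$. I would prove this by induction on $n+m$. If $n\le A$ or $m\le B$, the right-hand side is $0$, while $k+P>P\ge\max(A,B)\ge\min(n,m)$ forces $\lambda_{k+P}(n,m)[X,Y]=0$ by Remark~\ref{S32Rem2}, so the inequality is trivial; this is precisely where $P\ge A$ and $P\ge B$ enter, and it is why $P=\max(A,B)$ is the optimal choice. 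In the remaining case $n\ge A+1$, $m\ge B+1$ both sides have the form $Q_1(U)$, $Q_2(U)$ for the \emph{same} uniform $U=W_{k+P}(n,m)=V_k(n-A,m-B)$, and I invoke Lemma~\ref{S33Lem1} with $(a_1,b_1,q_1)=(a_{k+P},b_{k+P},x_my_n)$ for the $(X,Y)$-process and $(a_2,b_2,q_2)=(\tilde a_k,\tilde b_k,\tilde x_{m-B}\tilde y_{n-A})$ for the $(\tilde X,\tilde Y)$-process. The product comparison $q_1\le q_2$ is exactly the hypothesis with $i=m-B$ and $j=n-A$ (which lie in the allowed ranges $1\le i\le M-B$, $1\le j\le N-A$); the bound comparisons $a_{k+P}\le\tilde a_k$ (from the layers $(n,m-1)$ and $(n-1,m)$ at part index $k$) and $b_{k+P}\le\tilde b_k$ (from the same two layers at part index $k-1$) follow from the inductive hypothesis applied at $(n,m-1)$ and $(n-1,m)$, with the case $k=1$ immediate because then $\tilde b_k=\infty$ by the convention $b_1=\infty$ in the sampling algorithm. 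I expect this verification -- together with keeping all the edge conventions ($\lambda_i(n,m)=0$ for $i>\min(n,m)$, partitions at index $0$ or at negative index, the $b_1=\infty$ convention) consistent -- to be the most delicate part; everything around it is formal.

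Finally, specializing the displayed inequality to $n=N$ and $m=j$ and using the monotonicity $\tilde\lambda(N-A,j-B)\subseteq\tilde\lambda(N,j)$ from Remark~\ref{S32Rem3} gives
\[
\lambda^j_{k+\max(A,B)}[X,Y]\ \le\ \tilde\lambda^j_k[\tilde X,\tilde Y]\qquad\text{for all }k\ge1,\ j\in\{1,\dots,M\},
\]
which is (\ref{MonIneq2}), completing the proof.
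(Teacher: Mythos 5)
Your proof is correct and follows essentially the same strategy as the paper's: run Borodin's algorithm twice, coupling the draws by pushing a shared family of uniforms through the parameter-dependent quantile functions, then propagate the ordering by induction on $n+m$ via Lemma~\ref{S33Lem1}, handling base cases via Remark~\ref{S32Rem2} and closing with Remark~\ref{S32Rem3}. The only superficial difference is the direction of the index shift: you re-index the $\lambda$-side uniforms by subtraction (necessitating an auxiliary family $V'$ and a convention at nonpositive indices), whereas the paper sets $\tilde{U}(n,m,k) = U(n+A,m+B,k+\max(A,B))$, shifting the $\tilde{\lambda}$-side by addition and thereby avoiding any padding.
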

\begin{remark}\label{Rem.MonCoup} Observe that from (\ref{MonIneq2}) with $A = B= 0$, we have in particular
\begin{equation}\label{Eq.MonIneqRem}
\sum_{i = 1}^k \lambda_i^j[X,Y](\omega) \leq \sum_{i = 1}^k\tilde{\lambda}_i^j[\tilde{X}, \tilde{Y}](\omega),
\end{equation}
when $x_iy_j \leq \tilde{x}_i\tilde{y}_j$. In \cite[Proposition 3.7]{dimitrov2024airy} we established a monotone coupling of Schur processes, which ensures (\ref{Eq.MonIneqRem}), and is based on the {\em Robinson-Schensted-Knuth (RSK)} correspondence and {\em Greene's theorem}. It is worth pointing out that the coupling in Proposition \ref{MonCoup2} is different from the one in \cite[Proposition 3.7]{dimitrov2024airy} even when $A= B = 0$, for which (\ref{MonIneq2}) may fail for general $k$, see \cite[Remark 3.9]{dimitrov2024airy}.
\end{remark}
\begin{proof} Let $(\Omega, \mathcal{F}, \mathbb{P})$ be any probability space that supports $\{U(n,m,k): m, n, k \in \mathbb{N}\}$, where the latter are i.i.d. uniform $(0,1)$ random variables. Define further the random variables
\begin{equation}\label{S34L2E1}
\tilde{U}(n,m,k) = U(n+A,m +B,k + \max(A,B)),
\end{equation}
which we observe are again i.i.d. uniform $(0,1)$. We construct $\{\lambda[X,Y](n,m): m = 0,\dots, M, n = 0, \dots, N\}$ and $\{\tilde{\lambda}[X,Y](n,m): m = 0,\dots, M, n = 0, \dots, N\}$  using the sampling algorithm from Section \ref{Section3.2}. We start by setting 
$$\lambda[X,Y](0,m) = \lambda[X,Y](n, 0) = \tilde{\lambda}[X,Y](0,m) = \tilde{\lambda}[X,Y](n, 0)= \emptyset $$
for $m = 0, \dots, M$ and $n = 0, \dots, N$. 

If $\lambda[X,Y](n-1,m)$, $\lambda[X,Y](n, m-1)$, $\tilde{\lambda}[X,Y](n-1,m)$, $\tilde{\lambda}[X,Y](n, m-1)$ have been constructed, we set for $k \geq 1$
\begin{equation}\label{S34L2E2}
\lambda_k[X,Y](n,m) = Q^{n,m}_k[X,Y](U(n,m,k))\mbox{ and } \tilde{\lambda}_k[X,Y](n,m) = \tilde{Q}^{n,m}_k[X,Y](\tilde{U}(n,m,k)),
\end{equation}
where $Q^{n,m}_k[X,Y]$ is the quantile function of the distribution $F(\cdot | a_k,b_k,q)$ from (\ref{S32DefF}) with $a_k, b_k, q$ given by
\begin{equation}\label{S34L1E2}
\begin{split}
&a_k = \max(\lambda_{k}[X,Y](n,m-1), \lambda_{k}[X,Y](n-1,m)), \hspace{2mm} \\
&b_k = \min(\lambda_{k-1}[X,Y](n,m-1), \lambda_{k-1}[X,Y](n-1,m)), \hspace{2mm} q = x_m y_n,
\end{split}
\end{equation}
and $b_1 = \infty$. In addition, $\tilde{Q}^{n,m}_k[X,Y]$ is the quantile function of the distribution $F(\cdot | \tilde{a}_k,\tilde{b}_k,q)$ with $\tilde{a}_k, \tilde{b}_k$ as in (\ref{S34L1E2}) but with $\lambda$ replaced with $\tilde{\lambda}$. Finally, we set $\lambda^i[X,Y] = \lambda[X,Y](N,i)$ and $\tilde{\lambda}^i[X,Y] = \tilde{\lambda}[X,Y](N,i)$ for $i = 1, \dots, M$, and note that from Proposition \ref{DSP}, we have that under $\mathbb{P}$ the distribution of the sequences $(\lambda^1[X,Y], \dots, \lambda^{M}[X,Y])$ and $(\tilde{\lambda}^1[X,Y], \dots, \tilde{\lambda}^{M}[X,Y])$ is $\mathbb{P}_{X,Y}$.\\

What remains is to verify (\ref{MonIneq2}), for which it suffices to show that for $0 \leq m \leq M$, $0 \leq n \leq N$,
\begin{equation}\label{S34L2E3}
\lambda_{k + \max(A,B)}[X,Y](n,m)(\omega) \leq \tilde{\lambda}_{k}[\tilde{X}, \tilde{Y}](n,m)(\omega).
\end{equation}
Note that (\ref{S34L2E3}) trivially holds if $0 \leq m \leq B$ or $0\leq n \leq A$, as then the left side is equal to zero, cf. Remark \ref{S32Rem2}. Hence, we only need to show (\ref{S34L2E3}) when $B+1 \leq m \leq M$, and $A+1 \leq n \leq N$. Using that $\lambda[\tilde{X}, \tilde{Y}](n,m) \subseteq \lambda[\tilde{X}, \tilde{Y}](n',m')$, when $n \leq n'$ and $m \leq m'$, cf. Remark \ref{S32Rem3}, we see that it suffices to show  
\begin{equation}\label{S34L2E4}
 \lambda_{k + \max(A,B)}[X,Y](n + A,m +B)(\omega) \leq \tilde{\lambda}_k[\tilde{X}, \tilde{Y}](n,m)(\omega),
\end{equation}
for all $k \geq 1$, $m = 0, \dots, M-B$, $n = 0, \dots, N-A$. Note that (\ref{S34L2E4}) holds trivially when $m = 0$ or $n = 0$, as both sides are equal to zero. We may thus assume $m,n \geq 1$. 

Assuming that (\ref{S34L2E4}) holds when $(n,m)$ is replaced by $(n-1,m)$ or $(n,m-1)$, we see that 
\begin{equation}\label{S34L2E5}
a_{k + \max(A,B)} \leq \tilde{a}_k, \hspace{2mm} b_{k + \max(A,B)} \leq \tilde{b}_k, \hspace{2mm} q = x_{m + B}y_{n +A} \leq \tilde{x}_m \tilde{y}_n = \tilde{q},
\end{equation}
where $a_{k + \max(A,B)}, b_{k + \max(A,B)}$ are as in (\ref{S34L1E2}) with $m$ replaced with $m+B$ and $n$ replaced with $n +A$, and $\tilde{a}_k, \tilde{b}_k$ are as in (\ref{S34L1E2}) with $X,Y$ replaced with $\tilde{X}, \tilde{Y}$, and $\lambda$ replaced with $\tilde{\lambda}$. From Lemma \ref{S33Lem1} and (\ref{S34L2E1}) we conclude that for $k \geq 1$
\begin{equation*}
    \begin{split}
        &Q^{n +A,m +B}_{k + \max(A,B)}[X,Y](U(n + A,m +B,k +\max(A,B))) \\
        &= Q^{n +A,m +B}_{k + \max(A,B)}[X,Y](\tilde{U}(n,m,k))  \leq \tilde{Q}^{n,m}_k[\tilde{X},\tilde{Y}](\tilde{U}(n,m,k)),
    \end{split}
\end{equation*}
which together with (\ref{S34L2E2}) shows that (\ref{S34L2E4}) holds for $(n,m)$. The fact that (\ref{S34L2E4}) holds for all $0 \leq m \leq M -B$, $0 \leq n \leq N -A$ now follows by induction on $m+n$. 
\end{proof}

%
%
\subsection{Convergence of Schur processes}\label{Section3.5} In this section we state a weak convergence result about the Schur processes in Definition \ref{DSP}, which was shown in \cite{dimitrov2024tightness}. We begin by explaining how we scale our parameters in the following definition.

\begin{definition}\label{ParScale} We fix parameters $(a,b,c) \in \parP$ as in Definition \ref{DLP}. We fix $q \in (0,1)$ and set
\begin{equation}\label{SigmaQ}
\sigma_q = \frac{q^{1/3} (1 + q)^{1/3}}{1- q}, \hspace{2mm} p = \frac{q}{1-q}, \hspace{2mm} \sigma = \sqrt{p(1+p)}, \mbox{ and } f_q = \frac{q^{1/3}}{2 (1 + q)^{2/3}}.
\end{equation}
For $N \in \mathbb{N}$ we consider two numbers $A_N, B_N$ and sequences $\{x^N_i \}_{ i\geq 1}$ and $\{y^N_i\}_{i \geq 1}$ such that
\begin{equation}\label{ABSeq}
\begin{split}
&x_i^N =1 - \frac{1}{N^{1/3} b_i^+ \sigma_q } \mbox{ for $i = 1, \dots, B_N$, and } y_i^N =1 -\frac{1}{N^{1/3} a_i^+ \sigma_q } \mbox{ for $i = 1, \dots, A_N$},
\end{split}
\end{equation}
where $B_N \leq \min\left(\lfloor N^{1/12} \rfloor, J_b^+ \right)$ is the largest integer such that $x^N_{B_N} \geq q$, and $A_N \leq \min\left(\lfloor N^{1/12} \rfloor, J_a^+ \right)$ is the largest integer such that $y^N_{A_N} \geq q$. Here, we use the convention $x_0^N = y_0^N = 1$ so that $A_N =0$ and $B_N = 0$ are possible. We also have
\begin{equation}\label{RemSeq}
\begin{split}
\mbox{ $x_i^N = q$ for $i > B_N$ and $y_i^N = q$ for $i > A_N$.}
\end{split}
\end{equation}
Note that if $M,N \in \mathbb{N}$, we can define the ascending Schur process in Definition \ref{DSP} with parameters $X = (x_1^N, \dots, x_M^N)$ and $Y = (y_1^N, \dots, y_N^N)$ as above, since $x^N_i, y_i^N \in [q, 1)$ for all $i \in \mathbb{N}$.
\end{definition}

In the sequel we assume the same notation as in Definition \ref{ParScale}. We also assume that $M_N \geq N + N^{3/4} + 1$ and let $\mathbb{P}_N$ be the measure $\mathbb{P}_{X, Y}$ from Definition \ref{DSP} with $M = M_N$, $x_i = x_i^N$ for $i \in \{ 1, \dots, M_N \}$, $y_j = y_j^N$ for $j \in \{ 1, \dots, N \}$. If $\{\lambda^j_i: j = 1, \dots, M_N, i \geq 1 \}$ has distribution $\mathbb{P}_N$, we define the sequence of $\mathbb{N}$-indexed line ensembles $\mathfrak{L}^N = \{L_i^N\}_{i \geq 1}$ on $\mathbb{R}$ via 
\begin{equation}\label{S35E1}
L_i^N(s) = \begin{cases} \lambda_i^{N + s}  &\mbox{ if } s \in [ - N +1, M_N - N] \cap \mathbb{Z}, \\ \lambda_i^{1}  &\mbox{ if } s \leq - N, \\ \lambda_i^{M_N} &\mbox{ if } s > M_N - N,
\end{cases}
\end{equation}
extended by linear interpolation for non-integer $s$. We finally define $\mathcal{L}^N = \{\mathcal{L}_i^N\}_{i \geq 1}$ via
\begin{equation}\label{S35E2}
\mathcal{L}_i^N(t) = \sigma^{-1} N^{-1/3} \cdot \left( L_i^N(t N^{2/3}) - p t N^{2/3}  - 2p N\right).
\end{equation}

The following is a special case of \cite[Propostion 6.2]{dimitrov2024tightness}, corresponding to setting $c^+ = 0$.
\begin{proposition}\label{Prop.SchurToAiry} For any $(a,b,c) \in \parP$ the sequence $\mathcal{L}^N$ in (\ref{S35E2}) converges weakly to the line ensemble $\left\{f_q^{-1/2}\mathcal{L}_i^{a,b,c}(f_q t): i \geq 1, t \in \mathbb{R}\right\}$, where $\mathcal{L}^{a,b,c}$ is as in Proposition \ref{S12AWD}.
\end{proposition}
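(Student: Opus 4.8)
The statement is, as the surrounding text records, the $c^+ = 0$ case of \cite[Proposition 6.2]{dimitrov2024tightness}: for $(a,b,c)\in\parP$ one automatically has $a_i^- = b_i^- = 0$ and $c^+ = c^- = 0$, so the scaling prescribed in Definition \ref{ParScale} is exactly the one analyzed there and nothing beyond matching conventions is formally needed. To describe how I would prove it from scratch, the plan is the standard two-step route to weak convergence of line ensembles: convergence of finite-dimensional distributions via the determinantal structure, together with tightness.

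For the finite-dimensional distributions, I would first use that the ascending Schur process $\mathbb{P}_{X,Y}$ of Definition \ref{SPD}, read off from its particle configuration, is a determinantal point process whose correlation kernel has an explicit double-contour-integral form. Under the scalings (\ref{S35E1})--(\ref{S35E2}) I would then carry out a steepest-descent analysis of this kernel. The bulk of the parameters, all equal to $q$, contributes the cubic exponent $z^3/3$ together with the parabolic and Gaussian shifts that are built into the relation (\ref{S1FDE}) between $\mathcal{A}^{a,b,c}$ and $\mathcal{L}^{a,b,c}$, with the constants $\sigma,\sigma_q,p,f_q$ of (\ref{SigmaQ}) entering as normalizing factors. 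The finitely many parameters that differ from $q$, namely $x_i^N = 1 - (N^{1/3}b_i^+\sigma_q)^{-1}$ for $i\le B_N$ and $y_j^N = 1 - (N^{1/3}a_j^+\sigma_q)^{-1}$ for $j\le A_N$, sit at distance $O(N^{-1/3})$ from the double critical point and, in the limit, produce precisely the zeros $\{-(b_i^+)^{-1}\}$ and poles $\{(a_i^+)^{-1}\}$ of $\Phi_{a,b,c}$ from (\ref{DefPhi}). The goal of this step is to show that, after a suitable gauge transformation, the prelimit kernel converges uniformly on compact spatial sets to the Airy wanderer kernel $K_{a,b,c}$ of Definition \ref{3BPKernelDef}, up to the deterministic rescaling by $f_q$ in the statement; combined with the determinantal characterization of $\mathcal{A}^{a,b,c}$ (hence of $\mathcal{L}^{a,b,c}$) in Proposition \ref{S12AWD}, this upgrades to convergence of the point processes (\ref{T2E1}) and thus of all finite-dimensional laws of $\mathcal{L}^N$.

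For tightness I would show $\{\mathcal{L}^N\}$ is tight in $C(\mathbb{N}\times\mathbb{R})$ using two inputs. The first is one-point tightness of the top curves $\mathcal{L}^N_1(t)$, which I would extract from the kernel bounds above through a Fredholm-determinant estimate in the spirit of the one used to prove (\ref{S81E2}) in Proposition \ref{S13P2}. The second is an (asymptotic) Brownian Gibbs property for the prelimit ensembles $\mathfrak{L}^N$: the Schur dynamics of Section \ref{Section3.2} realize the $\lambda_i^{N+s}$ as non-intersecting lattice paths, whose conditional bridge laws converge under diffusive scaling to those of non-intersecting Brownian bridges. Feeding these into the standard tightness framework for Gibbsian line ensembles (as in \cite{CorHamA,DEA21}) yields the uniform equicontinuity estimates, hence tightness; together with the finite-dimensional convergence this gives $\mathcal{L}^N \Rightarrow \{f_q^{-1/2}\mathcal{L}^{a,b,c}_i(f_q t)\}$.

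The hard part will be the steepest-descent step. I would need $N$-dependent contours that are simultaneously steepest-descent for the cubic near its double critical point, that open up in the limit onto the contours $\Gamma_\alpha^+$ and $\Gamma_\beta^-$ of Definition \ref{DefContInf}, and that stay on the correct side of the $O(N^{1/3})$-scale poles coming from the spiked parameters, so that exactly the right finite set of residues assembles into $\Phi_{a,b,c}$. Keeping every error estimate uniform over compact sets in $(t,x)$ and over the number of spikes — which is bounded, thanks to the $N^{1/12}$ cutoffs defining $A_N$ and $B_N$ — is the delicate bookkeeping in the argument.
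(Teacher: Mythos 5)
The paper's own ``proof'' of this proposition is a one-sentence citation of \cite[Proposition 6.2]{dimitrov2024tightness} specialized to $c^+ = 0$, and you correctly identify this. Your from-scratch sketch (steepest-descent asymptotics of the determinantal kernel for finite-dimensional convergence, together with the discrete Gibbs property of the prelimit Schur-process ensembles driving tightness in the \cite{CorHamA,DEA21} framework) also tracks the strategy that the cited companion paper actually uses, so there is nothing to correct.
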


%
%
\subsection{Proof of Theorem \ref{Thm.MonCoupling}}\label{Section3.6}  Let $A$, $B$, $(a,b,c)$, and $(\tilde{a}, \tilde{b}, \tilde{c})$ be as in the statement of the theorem. Let $J_a^+, J_b^+$ and $\tilde{J}_a^+, \tilde{J}_b^+$ be as in Definition \ref{DLP} for the two sets of parameters $(a,b,c)$ and $(\tilde{a}, \tilde{b}, \tilde{c})$, respectively. For $N \in \mathbb{N}$, we let $M = M_N \geq N + N^{3/4} + 1$, $X^N = (x_1^N, \dots, x_M^N)$, $Y^N = (y_1^N, \dots, y_N^N)$ be as in Definition \ref{ParScale} for $(a,b,c)$. We also let $\tilde{X}^N = (\tilde{x}_1^N, \dots, \tilde{x}_M^N)$, $\tilde{Y}^N = (\tilde{y}_1^N, \dots, \tilde{y}_N^N)$ be as in Definition \ref{ParScale} with $a_i^+, b_i^+, J_a^+, J_b^+$ replaced with $\tilde{a}_i^+, \tilde{b}_i^+, \tilde{J}_a^+, \tilde{J}_b^+$. 

We first claim that 
\begin{equation}\label{Eq.ParOrdered}
x_{i+B}^N \leq \tilde{x}_i^N \mbox{ for } i = 1, \dots, M-B, \mbox{ and }y_{i+B}^N \leq \tilde{y}_i^N \mbox{ for } i = 1, \dots, N-A.
\end{equation}
We only establish the first set of inequalities in (\ref{Eq.ParOrdered}) --- the second one is established analogously.

Fix any $i \in \{1, \dots, M- B\}$. Since $\tilde{x}_i^N \in [q,1)$ by construction, we may assume $x_{i+B}^N > q$. The latter implies the following: 
\begin{equation}\label{Eq.Implications}
x_{i+B}^N = 1 - \frac{1}{N^{1/3} b_{i+B}^+ \sigma_q}, \hspace{2mm} i+B \leq \lfloor N^{1/12} \rfloor, \hspace{2mm} i+ B \leq J_b^+. 
\end{equation}
From the third part of (\ref{Eq.Implications}), we see $b_{i+B}^+ > 0$, and since $\tilde{b}_i^+ \geq b_{i+B}^+$, we conclude $\tilde{J}_b^+ \geq i$. The second part of (\ref{Eq.Implications}) shows $i \leq \lfloor N^{1/12} \rfloor$. The last two statements and Definition \ref{ParScale} show
$$ 1 - \frac{1}{N^{1/3} b_{i+B}^+ \sigma_q} \leq 1 - \frac{1}{N^{1/3} \tilde{b}_{i}^+ \sigma_q} = \tilde{x}_{i}^N.$$
The last equation, and the first part of (\ref{Eq.Implications}) show the first inequality in (\ref{Eq.ParOrdered}).\\

Combining (\ref{Eq.ParOrdered}) and Proposition \ref{MonCoup2}, we conclude that for each $N \in \mathbb{N}$, we can couple two random sequences 
$$(\lambda^1, \dots, \lambda^{M}) \mbox{ and } (\tilde{\lambda}^1, \dots, \tilde{\lambda}^{M})  \in \mathbb{Y}^{M},$$ 
whose distributions are $\mathbb{P}_{X^N,Y^N}$ and $\mathbb{P}_{\tilde{X}^N,\tilde{Y}^N}$ as in Definition \ref{SPD}, respectively, and which satisfy almost surely for all $j \in \{1, \dots, M\}$ and $k \geq 1$
\begin{equation}\label{Eq.MonIneqSpec}
 \lambda_{k+\max(A,B)}^j \leq \tilde{\lambda}_k^j.
\end{equation}
Let $\mathcal{L}^N$ be as in (\ref{S35E1}) and (\ref{S35E2}), and let $\tilde{\mathcal{L}}^N$ be as in (\ref{S35E1}) and (\ref{S35E2}) with $\lambda$ replaced with $\tilde{\lambda}$. From (\ref{Eq.MonIneqSpec}), we conclude that almost surely
\begin{equation}\label{Eq.MonIneqSpecLE}
\mathcal{L}^N_{k+\max(A,B)}(t) \leq \tilde{\mathcal{L}}^N_k(s) \mbox{ for } k \geq 1, t \in \mathbb{R}.
\end{equation}

From Proposition \ref{Prop.SchurToAiry}, we know that $\mathcal{L}^N \Rightarrow \mathcal{L}^{\infty}$, and $\tilde{\mathcal{L}}^N \Rightarrow \tilde{\mathcal{L}}^{\infty}$, where 
$$\mathcal{L}^\infty \overset{d}{=} \left\{f_q^{-1/2}\mathcal{L}_i^{a,b,c}(f_q t): i \geq 1, t \in \mathbb{R}\right\} \mbox{ and } \tilde{\mathcal{L}}^\infty \overset{d}{=} \left\{f_q^{-1/2}\mathcal{L}_i^{\tilde{a},\tilde{b},\tilde{c}}(f_q t): i \geq 1, t \in \mathbb{R}\right\}.$$
We conclude that $(\mathcal{L}^N, \tilde{\mathcal{L}}^N)$ is tight, and then by Skorohod's representation theorem, see \cite[Theorem 6.7]{Billing}, we may assume that $(\mathcal{L}^N, \tilde{\mathcal{L}}^N)$ and $(\mathcal{L}^\infty, \tilde{\mathcal{L}}^\infty)$, are defined on the same probability space, and
$$\mathcal{L}^N \overset{a.s.}{\rightarrow} \mathcal{L}^{\infty} \mbox{ and }\tilde{\mathcal{L}}^N \overset{a.s.}{\rightarrow} \tilde{\mathcal{L}}^{\infty}.$$
The last equation and (\ref{Eq.MonIneqSpecLE}) show that we can couple $\mathcal{L}^{a,b,c}$ and $\mathcal{L}^{\tilde{a},\tilde{b},\tilde{c}}$ on the same probability space, so that almost surely
$$f_q^{-1/2}\mathcal{L}^{a,b,c}_{k+\max(A,B)}(f_qt) \leq f_q^{-1/2}\mathcal{L}^{\tilde{a},\tilde{b},\tilde{c}}_k(f_q t) \mbox{ for } k \geq 1, t \in \mathbb{R}.$$
The last equation and (\ref{S1FDE}) imply the statement of the theorem.

%
%
\section{Limits of factorial moments}\label{Section4} For $t > 0$ and $(a,b,c)\in \parP$ as in Definition \ref{DLP}, we define the following random measures on $\mathbb{R}$:
\begin{equation}\label{Eq.MeasT}
M^{t}(A) = \sum_{i \geq 1} {\bf 1}\left\{ t^{-1} \cdot \left(\mathcal{A}^{a,b,c}_i(t) - t^2 \right)  \in A \right\}, \hspace{2mm} \tilde{M}^t(A) = \sum_{i \geq 1} {\bf 1}\left\{ \mathcal{A}^{a,b,c}_i(t)  \in A \right\},
\end{equation}
where $\mathcal{A}^{a,b,c}$ is as in Proposition \ref{S12AWD}. In Lemmas \ref{Lem.FirstMoment} and \ref{Lem.SecondMoment}, proved in Sections \ref{Section4.1} and \ref{Section4.2}, respectively, we analyze the limits of the first and second factorial moments of $M^t[\hat{\alpha}, \hat{\beta})$ for suitable $\hat{\alpha} \in \mathbb{R}$ and $\hat{\beta} \in [\hat{\alpha}, \infty]$. The latter will play a role in the proof of Theorem \ref{Thm.Slopes}(a,b). In Section \ref{Section4.3}, we also analyze the limit of the first factorial moment of $\tilde{M}^t[\hat{\alpha}, \infty)$, see Lemma \ref{Lem.FirstMomentFlat}, which will play a role in the proof of Theorem \ref{Thm.Slopes}(c,d). All three lemmas are established under the simplifying assumption that $a_i^+ > a_{i+1}^+$ for $i = 1, \dots, J_a^+$, which suffices for our purposes. In this section we freely use the terminology and notation for determinantal point processes from \cite[Section 2]{dimitrov2024airy}.

%
%
\subsection{First moments of $M^t$}\label{Section4.1} We begin by summarizing some notation in the following definition.

\begin{definition}\label{Def.SlopesPar} Assume the same notation as in Definition \ref{DLP}. Let $(a,b,c) \in \parP$ be such that $a^+_{i} > a^+_{i+1}$ for $i = 1, \dots, J_a^+$, and set $\hat{\alpha}_{i} = -2/a^+_{i}$ for $i = 1, \dots, J_{a}^+$. We also set $\hat{\alpha}_0 = 0$ and, if $J_{a}^+ < \infty$, we set $\hat{\alpha}_{J_a^+ + 1} = -\infty$. Since $c^{\pm} = b_i^- = a_i^- = 0$ for $i \geq 1$, we drop the plus sign from the notation and simply write $a_i, b_i, J_a, J_b$ in place of $a^+_i, b^+_i, J^+_a, J^+_b$. Lastly, we define $a_0 = \infty$, $1/a_0 = 0$, and, if $a_{i} = 0$, set $1/a_i = \infty$. 
\end{definition}

The goal of this section is to establish the following key lemma.
\begin{lemma}\label{Lem.FirstMoment} Assume the same notation as in Definition \ref{Def.SlopesPar}, and let $M^t$ be as in (\ref{Eq.MeasT}). For any $k \in \mathbb{Z}_{\geq 0}$ with $k \leq J_a$, and $\hat{\alpha} \in (\hat{\alpha}_{k+1}, \hat{\alpha}_{k})$, we have
\begin{equation}\label{Eq.FirstMoment}
\lim_{t \rightarrow \infty} \mathbb{E}\left[M^{t}[\hat{\alpha}, \infty) \right] = k.
\end{equation}
\end{lemma}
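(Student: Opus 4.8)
The plan is to use the determinantal structure of the Airy wanderer line ensemble recorded in Proposition \ref{S12AWD}. Fixing the single time $t$, the random measure $\tilde{M}^t$ that places atoms at $\{\mathcal{A}^{a,b,c}_i(t)\}_{i\ge1}$ is a determinantal point process on $\mathbb{R}$ with correlation kernel $x\mapsto K_{a,b,c}(t,x;t,x)$, so $\mathbb{E}[M^t[\hat\alpha,\infty)] = \mathbb{E}[\tilde M^t[t^2+\hat\alpha t,\infty)] = \int_{t^2+\hat\alpha t}^\infty K_{a,b,c}(t,x;t,x)\,dx$. After the change of variables $x = t^2 + yt$ and using that $K^1_{a,b,c}\equiv0$ and $K^2_{a,b,c}(t,x;t,x)=0$ for equal times (the indicator $\mathbf{1}\{t_2>t_1\}$ vanishes), we are reduced to computing $\lim_{t\to\infty} t\int_{\hat\alpha}^\infty K^3_{a,b,c}(t, t^2+yt; t, t^2+yt)\,dy$. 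First I would write this diagonal value explicitly from the fourth line of (\ref{3BPKer}): it is a double contour integral over $\Gamma^+_{\alpha}\times\Gamma^-_{\beta}$ of $\frac{e^{z^3/3 - (t^2+yt)z - w^3/3 + (t^2+yt)w}}{z+t-w-t}\cdot\frac{\Phi_{a,b,c}(z+t)}{\Phi_{a,b,c}(w+t)}$, where we are free to choose the contour shifts $\alpha,\beta$ subject to $\alpha+t<\underline a$, $\beta+t>\underline b$, which for large $t$ gives plenty of room.

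The core of the argument is a steepest-descent analysis. I would substitute $z\mapsto t + t^{1/3}\tilde z$ and $w\mapsto t + t^{1/3}\tilde w$ (the standard Airy scaling around the point $t$), so that the cubic part $z^3/3 - w^3/3$ combined with the $-t^2 z + t^2 w$ and $-ytz + ytw$ terms produces, after collecting powers of $t$, a leading exponential $e^{t(\,\cdots)}$ whose exponent has critical points, plus the rescaled Airy kernel at scale $t^{1/3}$. Crucially, the factor $\frac{\Phi_{a,b,c}(z+t)}{\Phi_{a,b,c}(w+t)}$ must be tracked: near $z = t$ we have $z+t \approx 2t$, and $\Phi_{a,b,c}(2t) = \prod_i \frac{(1+b_i^+\cdot 2t)}{(1-a_i^+\cdot 2t)}\to$ has $k$ poles crossed if we try to deform $\Gamma^+_\alpha$ to pass to the right of the points $(a_i^+)^{-1}$. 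The mechanism producing the integer $k$ is exactly this: when $\hat\alpha\in(\hat\alpha_{k+1},\hat\alpha_k)$, i.e. $-2/a_{k+1}^+ < \hat\alpha < -2/a_k^+$, deforming the $z$-contour $\Gamma^+_\alpha$ across the poles at $(a_1^+)^{-1},\dots,(a_k^+)^{-1}$ (but not $(a_{k+1}^+)^{-1}$) picks up exactly $k$ residues, each of which, after integrating over $y\in[\hat\alpha,\infty)$ and taking $t\to\infty$, contributes $1$; the remaining deformed double-contour integral is shown to vanish in the limit because the two contours can be separated and the exponential $e^{t(\cdots)}$ decays. The threshold $\hat\alpha = -2/a_j^+$ is where the $j$-th residue's $y$-integral transitions between contributing $1$ and $0$: the residue at $z = (a_j^+)^{-1}$ (so $z+t$ near the pole, but we are in the regime where the pole location relative to the saddle at $z=t$ matters) carries a factor like $e^{t\cdot g_j(y)}$ with $g_j$ changing sign precisely at $y = -2/a_j^+$, so $\int_{\hat\alpha}^\infty e^{t g_j(y)}\,dy\cdot t \to 1$ when $g_j<0$ on the whole range and $\to 0$ otherwise.

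I would organize the proof as: (i) reduce to $\lim_t t\int_{\hat\alpha}^\infty K^3_{a,b,c}(t,t^2+yt;t,t^2+yt)\,dy$ via the determinantal formula and $K^1=K^2=0$; (ii) perform the contour deformation of $\Gamma^+_\alpha$ past the first $k$ poles, writing $K^3 = \sum_{j=1}^k (\text{residue}_j) + (\text{deformed integral})$; (iii) show $t\int_{\hat\alpha}^\infty(\text{residue}_j)\,dy \to 1$ for each $j\le k$ by evaluating the residue explicitly — the $w$-integral remains and is itself amenable to a (simpler, one-dimensional) steepest descent or residue computation, and the $y$-integral localizes; (iv) show $t\int_{\hat\alpha}^\infty(\text{deformed double integral})\,dy\to0$ by bounding the integrand using the exponential decay from the cubic terms (as in the estimates (\ref{S22E1})–(\ref{S22E2}) already used in Section \ref{Section2.2}) together with the fact that, post-deformation, the real part of the exponent's leading-order term is strictly negative uniformly. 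The main obstacle I anticipate is step (iii)/(iv): keeping careful track of which poles are crossed as a function of $\hat\alpha$ (this is where the assumption $a_i^+ > a_{i+1}^+$ is used — it guarantees all poles are simple and the thresholds $\hat\alpha_j$ are distinct), and producing uniform-in-$y$ bounds that survive multiplication by $t$ and integration over the unbounded range $[\hat\alpha,\infty)$ — one must split the $y$-integral into a bounded part near the threshold, where a precise local expansion gives the "$\to 1$", and a tail part $y\to\infty$, where cruder exponential bounds suffice. The assumption that the parameters are distinct, and the already-available machinery for bounding $K^3$ from Section \ref{Section2.2}, should make all the estimates tractable.
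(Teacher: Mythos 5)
Your overall strategy matches the paper's: reduce $\mathbb{E}[M^t[\hat\alpha,\infty)]$ to a contour integral via the determinantal structure (and the observation $K^1 = K^2 = 0$ at equal times), then deform the $z$-contour past the poles at $1/a_1^+,\dots,1/a_k^+$, with those residues supplying the integer $k$ and the remainder vanishing as $t\to\infty$. The role of the hypothesis $\hat\alpha\in(\hat\alpha_{k+1},\hat\alpha_k)$ in fixing the deformation location (so that exactly $k$ poles are crossed), and of the simplicity hypothesis $a_i^+>a_{i+1}^+$, are both correctly identified.

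Two concrete points would derail the argument as written, though. First, the $z\mapsto t+t^{1/3}\tilde z$ Airy rescaling is the wrong scale here: with $x_1 = x_2 = t^2+yt$ the exponent $z^3/3 - x_1 z$ develops its critical point at unit distance from $z=t$ (at $z-t \approx -\hat\alpha/2$), not at distance $t^{1/3}$, and the poles of $\Phi_{a,b,c}(z+t)$ sit at $z = 1/a_i^+ - t$, i.e. at unit distance from the critical point after the shift $z\mapsto z-t$. The paper works at unit scale throughout; a $t^{1/3}$ window never sees the poles and would not produce $k$. Second, and more seriously, the claimed mechanism for the residue contribution --- ``$\int_{\hat\alpha}^\infty e^{tg_j(y)}\,dy\cdot t\to 1$ when $g_j<0$ on the whole range'' --- is false: if $g_j<0$ strictly then the integral decays exponentially in $t$ and multiplying by $t$ still gives $0$, not $1$. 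The actual reason the $j$-th residue contributes exactly $1$ is a \emph{second} contour deformation, in $w$: after taking the $z$-residue at $1/a_j^+$ and doing the $y$-integral (which produces the factor $(1/a_j^+-w)^{-2}$), the factor $1/\Phi(w)$ has a simple zero at $w=1/a_j^+$, leaving a simple pole whose residue evaluates algebraically to exactly $1$; the remaining $w$-integral over a shifted steepest-descent contour $\Gamma_u^-$ is then shown to vanish. You do mention that the $w$-integral ``is amenable to a residue computation,'' which is on the right track, but the $g_j$ heuristic as stated is not salvageable and needs to be replaced by this two-stage residue extraction.

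Aside from these, the architecture of your proposal --- Fubini to reduce to a kernel integral, deforming $\Gamma^+_\alpha$ across $1/a_1^+,\dots,1/a_k^+$, and exponential bounds (in the spirit of the estimates already in Section \ref{Section2.2}) to kill the remainder --- is exactly what the paper carries out in Steps 1--4 of the proof.
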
 
\begin{proof} For clarity, we split the proof into four steps. In the first step, we derive a double-contour integral formula for $\mathbb{E}\left[M^{t}[\hat{\alpha}, \infty) \right]$, see (\ref{Eq.FME1}). In the second step, we deform the contours from (\ref{Eq.FME1}), and reduce the proof of the lemma to establishing that certain single or double integrals converge to zero as $t \rightarrow \infty$, see (\ref{Eq.FME3}) and (\ref{Eq.FME4}). In the third step we summarize various estimates for the integrands in (\ref{Eq.FME3}) and (\ref{Eq.FME4}), and in the fourth step we establish these two statements.\\

{\bf \raggedleft Step 1.} From Proposition \ref{S12AWD}, \cite[Lemma 2.17]{dimitrov2024airy}, and \cite[Proposition 2.13(5,6)]{dimitrov2024airy}, we know that $M^{t}$ is a determinantal point process on $\mathbb{R}$ with correlation kernel $K^t(x,y) = t K_{a,b,c}(t,tx + t^2;t,ty + t^2)$ and Lebesgue reference measure. From Definition \ref{3BPKernelDef}, we can write $K^t(x,y)$ as 
\begin{equation}\label{Eq.KernelT}
 K^t(x,y) = \frac{t}{(2\pi \im)^2} \int_{\Gamma_{\alpha }^+}  d z \int_{\Gamma_{0}^-}  dw \frac{e^{z^3/3 - w^3/3 - z^2t + w^2t - ztx+ wty +t^2x - t^2y}}{z - w } \prod_{i = 1}^{\infty} \frac{(1 + b_i z)(1 - a_i w)}{(1 - a_i z)(1 + b_i w)},
\end{equation}
where $\Gamma_{\alpha}^+, \Gamma_{0}^-$ are as in Definition \ref{DefContInf} with $0 < \alpha < 1/a_1$. We mention that in obtaining (\ref{Eq.KernelT}) from Definition \ref{3BPKernelDef}, we performed in $K^3_{a,b,c}$ the change of variables $z \mapsto z - t$ and $w \mapsto w - t$, and used the definition of $\Phi_{a,b,c}$ from (\ref{DefPhi}).

From \cite[(2.13) and (2.18)]{dimitrov2024airy}, 
\begin{equation*}
\begin{split}
 \mathbb{E}\left[M^{t}[\hat{\alpha}, \infty)  \right] = \int_{\hat{\alpha}}^{\infty}K^t(x,x)dx. 
 \end{split} 
\end{equation*}
Substituting $K^t(x,x)$ from (\ref{Eq.KernelT}), exchanging the order of the integrals, and performing the integral over $x$, we obtain
\begin{equation}\label{Eq.FME1}
\begin{split}
 \mathbb{E}\left[M^{t}[\hat{\alpha}, \infty)  \right] = \frac{1}{(2\pi \im)^2} \int_{\Gamma_{\alpha }^+}  d z \int_{\Gamma_{0}^-}  dw \frac{e^{z^3/3 - w^3/3 - z^2t + w^2t - (z-w)t\hat{\alpha}}}{(z - w)^2 } \prod_{i = 1}^{\infty} \frac{(1 + b_i z)(1 - a_i w)}{(1 - a_i z)(1 + b_i w)}.
 \end{split} 
\end{equation}
The exchange of the order of integration is justified by Fubini's theorem in view of the following bounds, which imply absolute integrability of the integrand on the product contour. For any $A \in [0, 1/a_{k+1})$, there exists $D_1 > 0$ (depending on $A$ alone) such that for $\alpha, \beta \in [0, A]$ and $z \in \Gamma_{\alpha}^+$, $w \in \Gamma_{\beta}^-$:
\begin{equation}\label{Eq.CubicEstimate}
\left|e^{z^3/3}\right| \leq D_1 \exp(-|z|^3/12), \hspace{2mm} \left|e^{w^3/3}\right| \leq D_1 \exp(-|w|^3/12).
\end{equation}
if we set $B = \sum_{i \geq 1} (a_i + b_i)$, then by (\ref{RatBound}) with $d = 1/2$, we further have
\begin{equation}\label{Eq.ProdEstimate1}
\prod_{i = 1}^{\infty} |1 + b_i z| \leq \exp\left(B|z|\right), \hspace{2mm} \prod_{i \geq 1} \frac{|1 - a_i w|}{|1 + b_i w|} \leq \exp\left(2B|w| \right).
\end{equation}
Lastly, for $d_1 = (1/2)(1 - A a_{k+1})$, we have $|1-a_iz| \geq d_1$ for $i \geq k+1$, and then from (\ref{RatBound})
\begin{equation}\label{Eq.ProdEstimate2}
\prod_{i \geq k+1}^{\infty} \frac{1}{|1 - a_i z|} \leq \exp\left(B|z|/d_1 \right).
\end{equation}
We mention that in deriving (\ref{Eq.FME1}) we also used $|z-w| \geq \Real(z-w) \geq \alpha > 0$ for $z \in \Gamma_{\alpha}^+, w \in \Gamma^-_{0}$, and also that $\Gamma_{\alpha}^+$ is bounded away from $a_1^{-1}, \dots, a_k^{-1}$, so that $\prod_{i =1}^{k} \frac{1}{(1 - a_i z)}$ is bounded on $\Gamma_{\alpha}^+$.\\

{\bf \raggedleft Step 2.} Since $(\hat{\alpha}_{k+1}, \hat{\alpha}_{k})$ is open and $\hat{\alpha} \in (\hat{\alpha}_{k+1}, \hat{\alpha}_{k})$, we can find $\epsilon_0 > 0$ such that 
$$ \left(-(1/2)\hat{\alpha} - \epsilon_0, -(1/2)\hat{\alpha} + \epsilon_0\right) \subset \left(1/a_{k}, 1/a_{k+1}\right).$$
Fix $\delta \in (0, \epsilon_0/4)$, set
\begin{equation}\label{Eq.FMDeltaEpsilon}
\begin{split}
u = -(1/2)\hat{\alpha} + \delta, \hspace{2mm} v = -(1/2)\hat{\alpha} + 2\delta, 
\end{split}
\end{equation}
and notice that by construction $u,v \in \left(1/a_{k}, 1/a_{k+1}\right)$.

We now proceed to deform $\Gamma_{\alpha}^+$ to $\Gamma_{v}^+$ in (\ref{Eq.FME1}). In the process of deformation, we cross the {\em simple} poles at $z = 1/a_j$ for $j = 1, \dots, k$. By the residue theorem, 
\begin{equation}\label{Eq.FME2}
\begin{split}
&\mathbb{E}\left[M^{t}[\hat{\alpha}, \infty)  \right] = \sum_{j = 1}^k \frac{1}{2\pi \im}  \int_{\Gamma_{0}^-} dw \frac{e^{a_j^{-3}/3 - w^3/3 - a_j^{-2}t + w^2t - (1/a_j-w)t\hat{\alpha}}}{(1/a_j - w)^2 } \\
&\times \prod_{i = 1, i \neq j}^{\infty} \frac{(1 + b_i /a_j)(1 - a_i w)}{(1 - a_i/a_j)(1 + b_i w)} \cdot \frac{(1 + b_j/a_j)(1 - a_j w)}{a_j(1 + b_j w)}\\
 & + \frac{1}{(2\pi \im)^2} \int_{\Gamma_{v}^+}  d z \int_{\Gamma_{0}^-}  dw \frac{e^{z^3/3 - w^3/3 - z^2t + w^2t - (z-w)t\hat{\alpha}}}{(z - w)^2 } \prod_{i = 1}^{\infty} \frac{(1 + b_i z)(1 - a_i w)}{(1 - a_i z)(1 + b_i w)}.
 \end{split} 
\end{equation}
We mention that the poles at $z = 1/a_j$ are simple due to our assumption that the positive $a_i$'s are distinct, as in Definition \ref{Def.SlopesPar}. In addition, we mention that the contour deformation near infinity is justified due to the decay ensured by the cubic terms in the exponential, see (\ref{Eq.CubicEstimate}), and the bounds in (\ref{Eq.ProdEstimate1}), (\ref{Eq.ProdEstimate2}).

We next deform $\Gamma_{0}^-$ to $\Gamma_{u}^-$ in (\ref{Eq.FME2}). In the process of deformation we cross the simple poles at $w = 1/a_j$ for $j = 1, \dots, k$ within the first two lines, and do not cross any poles for the third. In addition, all the residues from the first two lines are readily seen to equal $1$. By the residue theorem,
\begin{equation}\label{Eq.FMExpanded}
\mathbb{E}\left[M^{t}[\hat{\alpha}, \infty)  \right] = k + A^t + \sum_{j = 1}^k A^{j,t},
\end{equation}
where, setting $\Phi(z) = \prod_{i \geq 1}\frac{1+b_iz}{1- a_iz}$ and $\hat{\Phi}_j = \prod_{i = 1, i \neq j}^{\infty} \frac{(1 + b_i /a_j)}{(1 - a_i/a_j)} \cdot \frac{(1 + b_j/a_j)}{a_j}$, we have
\begin{equation}\label{Eq.FMAT}
A^t =  \frac{1}{(2\pi \im)^2} \int_{\Gamma_{v}^+}  d z \int_{\Gamma_{u}^-}  dw \frac{e^{z^3/3 - w^3/3 - z^2t + w^2t - (z-w)t\hat{\alpha}}}{(z - w)^2 } \cdot \frac{\Phi(z)}{\Phi(w)},
\end{equation}
and for $j = 1,\dots, k$
\begin{equation}\label{Eq.FMAJT}
A^{j,t} =  \frac{1}{2\pi \im}  \int_{\Gamma_{u}^-} dw \frac{e^{a_j^{-3}/3 - w^3/3 - a_j^{-2}t + w^2t - (1/a_j-w)t\hat{\alpha}}}{(1/a_j - w)^2 } \cdot \frac{\hat{\Phi}_j}{\Phi(w)}.
\end{equation}

From (\ref{Eq.FMExpanded}), we see that, to show (\ref{Eq.FirstMoment}), it suffices to prove
\begin{equation}\label{Eq.FME3}
\begin{split}
&\lim_{t \rightarrow \infty} A^t = 0,
 \end{split} 
\end{equation}
and for $j = 1, \dots, k$ that
\begin{equation}\label{Eq.FME4}
\begin{split}
&\lim_{t \rightarrow \infty}A^{j,t} = 0.
 \end{split} 
\end{equation}

{\bf \raggedleft Step 3.} In this step we derive various estimates for the integrands in (\ref{Eq.FMAT}) and (\ref{Eq.FMAJT}).

From the definition of the contours $\Gamma^+_v, \Gamma_u^-$ in Definition \ref{DefContInf}, we have for $z \in \Gamma_v^+$ and $w \in \Gamma_u^-$ that
$$z = v + e^{\im \phi}r_z \mbox{ and }w = u + e^{\im \psi}r_w,$$
where $\phi \in \{\pi/4, -\pi/4\}$, $\psi \in \{3\pi/4, -3\pi/4\}$, $r_z = |z-v|$, $r_w = |w - u|$. Using that $\cos(\phi) = \sqrt{2}/2$, $\cos(\psi) = -\sqrt{2}/2$, we get
\begin{equation*}
\Real[z] = v + (\sqrt{2}/2) r_z, \hspace{2mm} \Real[z^2] = v^2 + \sqrt{2}vr_z, \hspace{2mm} \Real[w] = u - (\sqrt{2}/2)r_w, \hspace{2mm} \Real[w^2] = u^2 - \sqrt{2}ur_w.
\end{equation*}
The latter shows for $z \in \Gamma_v^+$, and $w \in \Gamma_u^-$, that
\begin{equation}\label{Eq.FMQuadraticBound}
\left|e^{- z^2t} \right| = \exp\left( -tv^2 - \sqrt{2}tvr_z\right), \hspace{2mm} \left|e^{w^2t}\right| = \exp\left(tu^2 - \sqrt{2}tur_w \right).
\end{equation}
In addition, we have
\begin{equation}\label{Eq.FMLinearBound}
\left|e^{-zt\hat{\alpha}}\right| = \exp\left(- t v \hat{\alpha} - t \hat{\alpha}(\sqrt{2}/2)r_z \right) , \hspace{2mm} \left|e^{wt\hat{\alpha}}\right| = \exp \left( tu \hat{\alpha} - t\hat{\alpha}(\sqrt{2}/2)r_w \right).
\end{equation}
Combining (\ref{Eq.FMDeltaEpsilon}), (\ref{Eq.FMQuadraticBound}) and (\ref{Eq.FMLinearBound}), we conclude
\begin{equation}\label{Eq.FMBigBound}
\left|e^{- z^2t + w^2t- (z-w)t\hat{\alpha}}\right| = \exp \left( - 3t\delta^2  - \sqrt{2}t(2\delta r_z + \delta r_w) \right) \leq \exp \left(-3t\delta^2 \right).
\end{equation}
Combining (\ref{Eq.FMDeltaEpsilon}), (\ref{Eq.FMQuadraticBound}) and (\ref{Eq.FMLinearBound}), we also obtain
\begin{equation}\label{Eq.FMResBigBound}
\begin{split}
\left|e^{- a_j^{-2}t + w^2t - (1/a_j-w)t\hat{\alpha}}\right| = \exp \left( -t(a_j^{-1} + \hat{\alpha}/2)^2 + t(u + \hat{\alpha}/2)^2 - \sqrt{2}t\delta r_w  \right) \leq \exp(-15t\delta^2),
\end{split}
\end{equation}
where in the last inequality we used that from (\ref{Eq.FMDeltaEpsilon}) we have $u+\hat{\alpha}/2 = \delta$, and $|\hat{\alpha}/2 + a_i^{-1}| \geq \epsilon_0 > 4 \delta$ for all $i \geq 1$, as well as $t, \delta, r_w \geq 0$.

As $\Gamma_v^+$ is separated from $a_1^{-1}, \dots, a_k^{-1}$, and $\delta$-separated from $\Gamma_u^-$, we conclude that for some $D_2 > 0$, depending on $v, a_1, \dots, a_k$, and all $z \in \Gamma_v^+, w\in \Gamma_u^-$
\begin{equation}\label{Eq.FMRemTerms}
\frac{1}{|z-w|^2} \leq \delta^{-2}, \hspace{2mm} \prod_{i = 1}^k \frac{1}{|1-a_iz|} \leq D_2.
\end{equation}
In addition, from (\ref{Eq.FMDeltaEpsilon}) we have $|u - 1/a_i| > 3 \delta$ for all $i \geq 1$. Consequently, for all $w \in \Gamma_u^-$ and $j = 1,\dots, k$, we have
\begin{equation}\label{Eq.FMResSep}
\frac{1}{|1/a_j-w|^2} \leq \delta^{-2}.
\end{equation}

{\bf \raggedleft Step 4.} In this final step, we establish (\ref{Eq.FME3}) and (\ref{Eq.FME4}).

Combining (\ref{Eq.CubicEstimate}), (\ref{Eq.ProdEstimate1}), (\ref{Eq.ProdEstimate2}), (\ref{Eq.FMBigBound}) and (\ref{Eq.FMRemTerms}), we conclude 
\begin{equation*}
\begin{split}
&\left|A^t\right| \leq D_1D_2 \delta^{-2} \cdot e^{-3t \delta^2} \cdot \int_{\Gamma_{v}^+}  |d z| \int_{\Gamma_{u}^-} |dw| \exp\left(B|z| + B|z|/d_1 + 2B|w| - |z|^3/12 - |w|^3/12 \right),
\end{split}
\end{equation*}
where $|dz|, |dw|$ denote integration with respect to arc-length. The last inequality implies (\ref{Eq.FMAT}).

Similarly, for a fixed $j \in \{1,\dots, k\}$, combining (\ref{Eq.CubicEstimate}), (\ref{Eq.ProdEstimate1}), (\ref{Eq.FMResBigBound}) and (\ref{Eq.FMResSep}), we conclude 
\begin{equation*}
\begin{split}
&\left|A^{j,t}\right| \leq D_1 \delta^{-2} \cdot e^{-15t \delta^2} \cdot e^{a_j^{-3}/3}  |\hat{\Phi}_j| \cdot \int_{\Gamma_{u}^-} |dw| \exp\left(2B|w| - |w|^3/12 \right),
\end{split}
\end{equation*}
which implies (\ref{Eq.FMAJT}).
\end{proof}

%
%
\subsection{Second moments of $M^t$}\label{Section4.2} The goal of this section is to establish the following key lemma.
\begin{lemma}\label{Lem.SecondMoment} Assume the same notation as in Definition \ref{Def.SlopesPar}, and let $M^t$ be as in (\ref{Eq.MeasT}). For any $k \in \mathbb{Z}_{\geq 1}$ with $k \leq J_a$, $\hat{\alpha} \in (\hat{\alpha}_{k+1}, \hat{\alpha}_k)$, and $\hat{\beta} \in (\hat{\alpha}_k, \hat{\alpha}_{k-1})$, we have
\begin{equation}\label{Eq.SecondMoment}
\lim_{t \rightarrow \infty} \mathbb{E}\left[M^{t}[\hat{\alpha}, \hat{\beta}) \cdot \left(M^{t}[\hat{\alpha}, \hat{\beta}) - 1 \right) \right] = 0.
\end{equation}
\end{lemma}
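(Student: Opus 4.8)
The plan is to mirror the four-step structure of the proof of Lemma \ref{Lem.FirstMoment}, now working with the second factorial moment. Since $M^t$ is a determinantal point process on $\mathbb{R}$ with kernel $K^t(x,y) = tK_{a,b,c}(t,tx+t^2;t,ty+t^2)$, the second factorial moment has the standard representation
\begin{equation*}
\mathbb{E}\left[M^t[\hat\alpha,\hat\beta)\left(M^t[\hat\alpha,\hat\beta)-1\right)\right] = \int_{[\hat\alpha,\hat\beta)^2}\det\begin{bmatrix} K^t(x,x) & K^t(x,y) \\ K^t(y,x) & K^t(y,y)\end{bmatrix}\, dx\, dy = \int_{[\hat\alpha,\hat\beta)^2}\left(K^t(x,x)K^t(y,y) - K^t(x,y)K^t(y,x)\right)dx\,dy.
\end{equation*}
First I would substitute the double-contour formula (\ref{Eq.KernelT}) for each kernel entry, so the integrand becomes a product of two double-contour integrals; after performing the two elementary Gaussian-type $x$ and $y$ integrals over $[\hat\alpha,\hat\beta)$ (as in Step 1 of Lemma \ref{Lem.FirstMoment}, taking care that $\hat\beta$ may be finite, which only introduces harmless boundary terms bounded by the same exponential factors), one is left with a four-fold contour integral in variables $z_1,w_1,z_2,w_2$ on $\Gamma_\alpha^+$ and $\Gamma_0^-$. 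Fubini is justified by the same cubic-decay and product estimates (\ref{Eq.CubicEstimate})--(\ref{Eq.ProdEstimate2}).

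The heart of the argument is Step 2: deform all $z$-contours outward to $\Gamma_v^+$ and all $w$-contours inward to $\Gamma_u^-$, with $u = -\hat\alpha/2+\delta$, $v=-\hat\alpha/2+2\delta$ chosen exactly as in (\ref{Eq.FMDeltaEpsilon}) so that $u,v \in (1/a_k,1/a_{k+1})$; crucially, because $\hat\beta \in (\hat\alpha_k,\hat\alpha_{k-1})$, i.e. $-\hat\beta/2 \in (1/a_{k-1},1/a_k)$, the contour $\Gamma_v^+$ stays to the left of $1/a_k$ while being able to reach $-\hat\beta/2 + O(\delta)$-type positions relevant for the $\hat\beta$ boundary terms — one must check that for small $\delta$ both $\hat\alpha$- and $\hat\beta$-boundary contributions pick up residues only at $z,w \in \{1/a_1,\dots,1/a_k\}$. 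The key new feature versus Lemma \ref{Lem.FirstMoment} is that the ``diagonal'' residue terms no longer vanish: the product $K^t(x,x)K^t(y,y)$, after full contour deformation, contributes the constant $k^2$ (from $k$ residues in each factor), while $K^t(x,y)K^t(y,x)$ contributes $k^2$ as well — I expect the combinatorics to work out so that the leading constants cancel, leaving $\lim_t \mathbb{E}[M^t[\hat\alpha,\hat\beta)(M^t[\hat\alpha,\hat\beta)-1)] = k^2 - k^2 = 0$, with all remaining single/double/triple/quadruple contour integrals carrying a strictly negative exponent $e^{-ct\delta^2}$ in $t$ and hence vanishing. Tracking this cancellation carefully — showing that every surviving residue term either cancels against a matching term from the other determinant entry or decays — is what I expect to be the main obstacle.

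For Steps 3 and 4 I would reuse verbatim the exponential bounds (\ref{Eq.FMQuadraticBound})--(\ref{Eq.FMResSep}): on $\Gamma_v^+ \times \Gamma_u^-$ one has $|e^{-z^2t+w^2t-(z-w)t\hat\alpha}| \leq e^{-3t\delta^2}$ and the analogous residue bound $\leq e^{-15t\delta^2}$, and the same estimates hold with $\hat\alpha$ replaced by $\hat\beta$ after shrinking $\delta$ further (here one uses $-\hat\beta/2 \in (1/a_{k-1},1/a_k)$ so that $\Gamma_u^-,\Gamma_v^+$ are still on the correct side of $-\hat\beta/2$, up to possibly crossing the same $k$ poles, giving the residue count $k$ for the $\hat\beta$-term too). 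Each of the non-constant pieces is then bounded by a $t$-independent absolutely convergent contour integral times a factor $e^{-ct\delta^2}\to 0$. Assembling: the fully-deformed $K^t(x,x)K^t(y,y)$ term equals $(k + o(1))^2$, the $K^t(x,y)K^t(y,x)$ term equals $k^2 + o(1)$, their difference is $o(1)$, which is (\ref{Eq.SecondMoment}). I would close by remarking that, combined with Lemma \ref{Lem.FirstMoment}, this shows $M^t[\hat\alpha,\hat\beta) \Rightarrow k - (\text{count above }\hat\beta)$ is concentrated, which is the input needed for Theorem \ref{Thm.Slopes}(a).
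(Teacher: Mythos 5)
Your overall strategy (express the second factorial moment via the standard determinant formula, substitute the double-contour kernel, deform contours to pick up residues, and bound the rest by $e^{-ct\delta^2}$) is the same as the paper's. However, there is a genuine gap in the residue combinatorics, and it is in exactly the place you flag as the ``main obstacle.''

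The specific problem is your claim that, after full deformation, the term $\int_{[\hat\alpha,\hat\beta)^2}K^t(x,x)K^t(y,y)\,dx\,dy$ converges to $k^2$ and that $\int_{[\hat\alpha,\hat\beta)^2}K^t(x,y)K^t(y,x)\,dx\,dy$ also converges to $k^2$. Neither is correct. Because the integral is over the \emph{bounded} interval $[\hat\alpha,\hat\beta)$, not $[\hat\alpha,\infty)$, each diagonal factor $\int_{\hat\alpha}^{\hat\beta}K^t(x,x)\,dx = \int_{\hat\alpha}^{\infty}-\int_{\hat\beta}^{\infty}$ contributes $k-(k-1)=1$ residues in the limit (since $\hat\beta \in (\hat\alpha_k,\hat\alpha_{k-1})$ only clears the poles $1/a_1,\dots,1/a_{k-1}$), so the first term converges to $1$, not $k^2$. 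More seriously, the off-diagonal product $K^t(x,y)K^t(y,x)$ does not factor, and the residue count is \emph{not} multiplicative. Even over $[\hat\alpha,\infty)^2$ one does not get $k^2$ but rather $k=\min(k,k)$, because when both $z_1$ and $z_2$ pick up residues at $1/a_i$ and $1/a_j$ with $i\neq j$, the resulting $w$-integrals carry an extra exponential decay factor $e^{-ct\delta^2}$ and vanish; only the diagonal residue pairs $i=j$ contribute, and each contributes $1$. This is the content of the paper's reduction to $B^t(\tilde\alpha,\tilde\beta)\to\min(p,q)$ (via the signed decomposition $B^t = B^t(\hat\alpha,\hat\alpha)+B^t(\hat\beta,\hat\beta)-B^t(\hat\alpha,\hat\beta)-B^t(\hat\beta,\hat\alpha)$), and then the five-way split of $B^t(\tilde\alpha,\tilde\beta)$ into $\tilde B^t$, single-residue terms $B^{1,t}_i$, $B^{2,t}_j$, off-diagonal double residues $B^t_{ij}$ (which vanish), and diagonal double residues $B^t_{jj}$ (which give $1$ each). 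Without this analysis, the claimed cancellation $k^2-k^2=0$ is asserted for the wrong constants and the wrong reasons. The correct cancellation is $A^t\to(k-(k-1))^2=1$ against $B^t\to k+(k-1)-(k-1)-(k-1)=1$, and justifying the second limit is the bulk of the lemma's proof.
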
 
\begin{proof} For clarity, we split the proof into seven steps. In the first step, we express the second factorial moment of $M^{t}[\hat{\alpha}, \hat{\beta})$ as a sum of five terms. We introduce a common generalization of four of them by $B^t(\tilde{\alpha}, \tilde{\beta})$, and reduce the proof of the lemma to establishing its convergence, see (\ref{Eq.SME3}). In the second step, we decompose $B^t(\tilde{\alpha}, \tilde{\beta})$ into a finite sum of five types of terms, and reduce the proof of (\ref{Eq.SME3}) to establishing that each type converges, see (\ref{Eq.SMDecB}), (\ref{Eq.SMDecBi}), (\ref{Eq.SMDecBj}), (\ref{Eq.SMDecResij}) and (\ref{Eq.SMDecResii}). The latter five equations are established one at a time in the remaining five steps.\\

{\bf \raggedleft Step 1.} Using the determinantal structure of $M^t$ from Step 1 of the proof of Lemma \ref{Lem.FirstMoment} and \cite[(2.13) and (2.18)]{dimitrov2024airy}, we get
\begin{equation}\label{Eq.SME1}
\begin{split}
 &\mathbb{E}\left[M^{t}[\hat{\alpha}, \hat{\beta}) \cdot \left(M^{t}[\hat{\alpha}, \hat{\beta}) - 1 \right) \right] = A^t - B^t \mbox{, where } \\
 &A^t =\int_{\hat{\alpha}}^{\hat{\beta}}dx \int_{\hat{\alpha}}^{\hat{\beta}}dyK^t(x,x)K^t(y,y), \hspace{2mm} B^t = \int_{\hat{\alpha}}^{\hat{\beta}}dx \int_{\hat{\alpha}}^{\hat{\beta}}dy K^t(x,y)K^t(y,x).  
 \end{split} 
\end{equation} 
As in the proof of Lemma \ref{Lem.FirstMoment}, we substitute $K^t$ from (\ref{Eq.KernelT}), change the order of the integrals, and perform the integrals over $x,y$ to get
\begin{equation}\label{Eq.SME2}
\begin{split}
 &A^t =(A_1^t - A_2^t)^2, \mbox{ where } \\
 &A_1^t = \frac{1}{(2\pi \im)^2} \int_{\Gamma_{\alpha}^+}  d z \int_{\Gamma_{0}^-} dw \frac{e^{z^3/3 - w^3/3 - z^2t + w^2t- (z-w)t\hat{\alpha}}}{(z-w)^2} \cdot \frac{\Phi(z)}{\Phi(w)},\\
 & A_2^t = \frac{1}{(2\pi \im)^2} \int_{\Gamma_{\alpha}^+}  d z \int_{\Gamma_{0}^-} dw \frac{e^{z^3/3 - w^3/3 - z^2t + w^2t- (z-w)t\hat{\beta}}}{(z-w)^2} \cdot \frac{\Phi(z)}{\Phi(w)},
 \end{split} 
\end{equation} 
\begin{equation}\label{Eq.SME3}
\begin{split}
 &B^t = B^t(\hat{\alpha}, \hat{\alpha}) + B^t(\hat{\beta}, \hat{\beta})
- B^t(\hat{\alpha}, \hat{\beta}) - B^t(\hat{\beta}, \hat{\alpha}), \mbox{ where }\\
& B^t(\tilde{\alpha}, \tilde{\beta}) = \frac{1}{(2\pi \im)^4} \int_{\Gamma_{\alpha }^+}  d z_1 \int_{\Gamma_{0}^-} dw_1 \int_{\Gamma_{\alpha }^+}  d z_2 \int_{\Gamma_{0}^-} dw_2 \frac{\Phi(z_1)\Phi(z_2)}{\Phi(w_1)\Phi(w_2)} \\
 & \times \frac{e^{- (z_1-w_2)t\tilde{\alpha}- (z_2-w_1)t\tilde{\beta}} \cdot e^{z_1^3/3 - w_1^3/3 - z_1^2t + w_1^2t}  \cdot e^{z_2^3/3 - w_2^3/3 - z_2^2t + w_2^2t} }{(z_1 - w_1)(z_1 - w_2)(z_2 - w_1)(z_2 - w_2) }.
\end{split} 
\end{equation}
We recall from the proof of Lemma \ref{Lem.FirstMoment} that $\Phi(z) = \prod_{i \geq 1} \frac{1+b_iz}{1-a_iz}$ and $0 < \alpha < 1/a_1$. Also, the exchange of the order of integration is justified by the same estimates stated below (\ref{Eq.FME1}).\\

Suppose $p,q \in \mathbb{Z}_{\geq 0}$ satisfy $p,q \leq J_a$ and fix $\tilde{\alpha} \in (\hat{\alpha}_{p+1}, \hat{\alpha}_p)$, and $\tilde{\beta} \in (\hat{\alpha}_{q+1}, \hat{\alpha}_q)$. We claim 
\begin{equation}\label{Eq.SMPiece}
\lim_{t \rightarrow \infty} B^t(\tilde{\alpha}, \tilde{\beta}) = \min(p,q), 
\end{equation}
where
\begin{equation}\label{Eq.BTilde}
\begin{split}
&B^t(\tilde{\alpha}, \tilde{\beta}) = \frac{1}{(2\pi \im)^4} \int_{\Gamma_{\alpha }^+}  d z_1 \int_{\Gamma_{0}^-} dw_1 \int_{\Gamma_{\alpha }^+}  d z_2 \int_{\Gamma_{0}^-} dw_2 \frac{\Phi(z_1)\Phi(z_2)}{\Phi(w_1)\Phi(w_2)} \\
 & \frac{e^{- (z_1-w_2)t\tilde{\alpha}- (z_2-w_1)t\tilde{\beta}} \cdot e^{z_1^3/3 - w_1^3/3 - z_1^2t + w_1^2t}  \cdot e^{z_2^3/3 - w_2^3/3 - z_2^2t + w_2^2t} }{(z_1 - w_1)(z_1 - w_2)(z_2 - w_1)(z_2 - w_2) }. 
\end{split}
\end{equation}
We establish (\ref{Eq.SMPiece}) in the steps below. Here, we assume its validity and conclude the proof of the lemma.\\

Applying (\ref{Eq.SMPiece}) to each of the four terms in (\ref{Eq.SME3}), we get
$$\lim_{t \rightarrow \infty} B^t = \min(k,k) + \min(k-1,k-1) - \min(k, k-1) - \min(k-1,k) = 1.$$
From (\ref{Eq.FirstMoment}) and (\ref{Eq.FME1}), we have $A_1^t \rightarrow k$ and $A_2^t \rightarrow k-1$, and so $A^t \rightarrow 1$ as $t \rightarrow \infty$. The last two statements and (\ref{Eq.SME1}) give (\ref{Eq.SecondMoment}).\\

{\bf \raggedleft Step 2.} In the remainder of the proof we focus on establishing (\ref{Eq.SMPiece}), and without loss of generality assume $\tilde{\alpha} \leq \tilde{\beta}$ and hence $q \leq p$. As $\tilde{\beta} \in (\hat{\alpha}_{q+1}, \hat{\alpha}_q)$, $\tilde{\alpha} \in (\hat{\alpha}_{p+1}, \hat{\alpha}_p)$, we can find $\epsilon_0 > 0$ such that 
$$ (-(1/2)\tilde{\beta} - \epsilon_0, -(1/2)\tilde{\beta} + \epsilon_0) \subset \left(1/a_{q}, 1/a_{q+1}\right), \hspace{2mm} \left(-(1/2)\tilde{\alpha} - \epsilon_0, -(1/2)\tilde{\alpha} + \epsilon_0 \right) \subset \left(1/a_{p}, 1/a_{p+1}\right).$$
Fix $\delta \in (0, \epsilon_0/12)$, set
\begin{equation}\label{Eq.SMDeltaEpsilon}
\begin{split}
u_1 = -(1/2)\tilde{\beta} + \delta, \hspace{2mm} v_1 = -(1/2)\tilde{\alpha} + 6\delta, \hspace{2mm} u_2 = -(1/2)\tilde{\alpha} + 3\delta, \hspace{2mm} v_2 = -(1/2)\tilde{\beta} + 2\delta,
\end{split}
\end{equation}
and notice that by construction $u_1, v_2 \in \left(1/a_{q}, 1/a_{q+1}\right)$, while $u_2, v_1 \in \left(1/a_{p}, 1/a_{p+1}\right)$. In addition, one directly verifies that
\begin{equation}\label{Eq.SMWellSep}
\begin{split}
&\frac{1}{|z-w|} \leq \delta^{-1} \mbox{ for }z \in \Gamma_{v_1}^+ \cup \Gamma_{v_2}^+, w \in \Gamma_{u_1}^- \cup \Gamma_{u_2}^-, \\
&\frac{1}{|z-1/a_i|} \leq \delta^{-1} \mbox{ for }z \in \Gamma_{v_1}^+ \cup \Gamma_{v_2}^+ \cup \Gamma_{u_1}^- \cup \Gamma_{u_2}^- \mbox{ and }i\geq 1 , \\
& (\tilde{\alpha}/2 + 1/a_i)^2 \geq 144 \delta^2, \hspace{2mm} (\tilde{\beta}/2 + 1/a_i)^2 \geq 144 \delta^2 \mbox{ for } i \geq 1.
\end{split}
\end{equation}

We now proceed to deform the $z_i$ contours $\Gamma_{\alpha }^+$ in (\ref{Eq.SMPiece}) to $\Gamma_{v_i}^+$ for $i = 1,2$. In the process of deformation we cross the {\em simple} poles at $z_1 = 1/a_i$ for $i = 1, \dots, p$ and $z_2 = 1/a_j$ for $j = 1, \dots, q$. By the residue theorem, we get
\begin{equation}\label{Eq.SME2}
\begin{split}
&B^t(\tilde{\alpha}, \tilde{\beta}) = \tilde{B}^t(\tilde{\alpha}, \tilde{\beta}) + \sum_{i = 1}^p B_i^{1,t}(\tilde{\alpha}, \tilde{\beta}) + \sum_{j = 1}^q B_j^{2,t}(\tilde{\alpha}, \tilde{\beta})  \\
& + \sum_{i = 1}^{p} \sum_{j = 1, j\neq i}^q B_{ij}^t(\tilde{\alpha}, \tilde{\beta})+  \sum_{j = 1}^q B_{jj}^t(\tilde{\alpha}, \tilde{\beta}).
\end{split} 
\end{equation}
Here, $\tilde{B}^t(\tilde{\alpha}, \tilde{\beta})$ is as in (\ref{Eq.BTilde}) but with $\Gamma^+_{\alpha}$ replaced with $\Gamma_{v_i}^+$. For $i = 1, \dots, p$ we have
\begin{equation}\label{Eq.SMResi}
\begin{split}
&B_{i}^{1,t}(\tilde{\alpha}, \tilde{\beta}) = \frac{1}{(2\pi \im)^3} \int_{\Gamma_{0}^-} dw_1 \int_{\Gamma_{v_2}^+} dz_2 \int_{\Gamma_{0}^-} dw_2 \frac{\hat{\Phi}_i \Phi(z_2)}{\Phi(w_1) \Phi(w_2)}  \\
 & \times \frac{e^{- (1/a_i-w_2)t\tilde{\alpha}- (z_2-w_1)t\tilde{\beta}} \cdot e^{a_i^{-3}/3 - w_1^3/3-a_{i}^{-2}t + w_1^2 t} \cdot e^{z_2^3/3 - w_2^3/3-z_2^2t + w_2^2 t} }{(1/a_i - w_1)(1/a_i - w_2)(z_2 - w_1)(z_2 - w_2)},
\end{split} 
\end{equation}
where we recall $\hat{\Phi}_j =  \prod_{i = 1, i \neq j}^{\infty} \frac{(1 + b_i /a_j)}{(1 - a_i/a_j)} \cdot \frac{(1 + b_j/a_j)}{a_j}$. For $j = 1, \dots, q$, we have
\begin{equation}\label{Eq.SMResj}
\begin{split}
&B_{j}^{2,t}(\tilde{\alpha}, \tilde{\beta}) = \frac{1}{(2\pi \im)^3} \int_{\Gamma_{0}^-} dw_1 \int_{\Gamma_{v_1}^+} dz_1 \int_{\Gamma_{0}^-} dw_2 \frac{\hat{\Phi}_j \Phi(z_1)}{\Phi(w_1) \Phi(w_2)}  \\
 & \times \frac{e^{- (z_1-w_2)t\tilde{\alpha}- (1/a_j-w_1)t\tilde{\beta}} \cdot e^{z_1^3/3 - w_1^3/3-z_1^2t + w_1^2 t} \cdot e^{a_j^{-3}/3 - w_2^3/3-a_j^{-2}t + w_2^2 t} }{(1/a_j - w_1)(1/a_j - w_2)(z_1 - w_1)(z_1 - w_2)}.
\end{split} 
\end{equation}
In addition for $j = 1, \dots ,q$, we have
\begin{equation}\label{Eq.SMResii}
\begin{split}
&B_{jj}^t(\tilde{\alpha}, \tilde{\beta}) = \frac{1}{(2\pi \im)^2} \int_{\Gamma_{0}^-} dw_1  \int_{\Gamma_{0}^-} dw_2 \frac{\hat{\Phi}_j^2}{\Phi(w_1) \Phi(w_2)}  \\
 & \times \frac{e^{- (1/a_j-w_2)t\tilde{\alpha}- (1/a_j-w_1)t\tilde{\beta}} \cdot e^{a_j^{-3}/3 - w_1^3/3-a_{j}^{-2}t + w_1^2 t} \cdot e^{a_j^{-3}/3 - w_2^3/3-a_{j}^{-2}t + w_2^2 t} }{(1/a_j - w_1)^2(1/a_j - w_2)^2},
\end{split} 
\end{equation}
Lastly, for $ i \neq j$
\begin{equation}\label{Eq.SMResij}
\begin{split}
&B_{ij}^t(\tilde{\alpha}, \tilde{\beta}) = \frac{1}{(2\pi \im)^2} \int_{\Gamma_{0}^-} dw_1  \int_{\Gamma_{0}^-} dw_2 \frac{\hat{\Phi}_i\hat{\Phi}_j}{\Phi(w_1) \Phi(w_2)}  \\
 & \times \frac{e^{- (1/a_i-w_2)t\tilde{\alpha}- (1/a_j-w_1)t\tilde{\beta}} \cdot e^{a_i^{-3}/3 - w_1^3/3-a_{i}^{-2}t + w_1^2 t} \cdot e^{a_j^{-3}/3 - w_2^3/3-a_{j}^{-2}t + w_2^2 t} }{(1/a_i - w_1)(1/a_j - w_1)(1/a_i - w_2)(1/a_j - w_2)}.
\end{split} 
\end{equation}
As in the proof of Lemma \ref{Lem.FirstMoment}, the contour deformation near infinity is justified due to the decay ensured by the cubic terms in the exponential, see (\ref{Eq.CubicEstimate}), and the bounds in (\ref{Eq.ProdEstimate1}), (\ref{Eq.ProdEstimate2}). \\

{\raggedleft \em Heuristic explanation of (\ref{Eq.SME2}):} As we deform the contours, we can have that only $z_1$ picks up a pole (producing $B_i^{1,t}$), only $z_2$ picks up a pole (producing $B_j^{2,t}$), neither picks up a pole (producing $\tilde{B}^t$), both $z_1$ and $z_2$ pick up a pole and the two poles are different (producing $B_{ij}^t$), or both $z_1$ and $z_2$ pick up a pole and the two poles are the same (producing $B_{jj}^t$).\\

In view of (\ref{Eq.SME2}), to prove (\ref{Eq.SMPiece}), it suffices to show 
\begin{equation}\label{Eq.SMDecB}
\lim_{t \rightarrow \infty} \tilde{B}^t(\tilde{\alpha}, \tilde{\beta}) = 0;
\end{equation}
\begin{equation}\label{Eq.SMDecBi}
\lim_{t \rightarrow \infty} B^{1,t}_i(\tilde{\alpha}, \tilde{\beta}) = 0, \mbox{ for } i = 1, \dots, p;
\end{equation}
\begin{equation}\label{Eq.SMDecBj}
\lim_{t \rightarrow \infty} B^{2,t}_j(\tilde{\alpha}, \tilde{\beta}) = 0, \mbox{ for } j = 1, \dots, q;
\end{equation}
\begin{equation}\label{Eq.SMDecResij}
\lim_{t \rightarrow \infty} B_{ij}^t(\tilde{\alpha}, \tilde{\beta}) = 0, \mbox{ for } 1 \leq i \leq p, \hspace{2mm} 1\leq  j \leq q, \hspace{2mm} i \neq j;
\end{equation}
\begin{equation}\label{Eq.SMDecResii}
\lim_{t \rightarrow \infty} B_{jj}^t(\tilde{\alpha}, \tilde{\beta}) = 1, \mbox{ for } 1 \leq j \leq q.
\end{equation}
The above five equations are established in the steps below.\\

{\bf \raggedleft Step 3.} In this step, we prove (\ref{Eq.SMDecB}).

We start by deforming the $w_i$ contours $\Gamma_{0}^-$ in $\tilde{B}^t(\tilde{\alpha}, \tilde{\beta})$ to $\Gamma_{u_i}^-$ for $i = 1,2$. In the process of deformation, we do not cross any poles, and so by Cauchy's theorem
\begin{equation}\label{Eq.SMBSpec}
\begin{split}
&\tilde{B}^t(\tilde{\alpha}, \tilde{\beta}) = \frac{1}{(2\pi \im)^4} \int_{\Gamma_{v_1}^+}  d z_1 \int_{\Gamma_{u_1}^-} dw_1 \int_{\Gamma_{v_2}^+}  d z_2 \int_{\Gamma_{u_2}^-} dw_2 \frac{\Phi(z_1)\Phi(z_2)}{\Phi(w_1)\Phi(w_2)} \\
 & \times \frac{e^{- (z_1-w_2)t\tilde{\alpha}- (z_2-w_1)t\tilde{\beta}} \cdot e^{z_1^3/3 - w_1^3/3 - z_1^2t + w_1^2t}  \cdot e^{z_2^3/3 - w_2^3/3 - z_2^2t + w_2^2t} }{(z_1 - w_1)(z_1 - w_2)(z_2 - w_1)(z_2 - w_2) }.
\end{split}
\end{equation}
As in the proof of Lemma \ref{Lem.FirstMoment}, we have for $z_1,z_2 \in \Gamma_v^+$ and $w_1,w_2 \in \Gamma_u^-$ that
\begin{equation}\label{Eq.SMParametrization}
z_i = v_i + e^{\im \phi_i}r_{z_i} \mbox{ and }w_i = u_i + e^{\im \psi_i}r_{w_i},
\end{equation}
where $\phi_i \in \{\pi/4, -\pi/4\}$, $\psi_i \in \{3\pi/4, -3\pi/4\}$, $r_{z_i} = |z_i-v_i|$, $r_{w_i} = |w_i - u_i|$. Using (\ref{Eq.FMQuadraticBound}) and (\ref{Eq.FMLinearBound}), we obtain the following analogue of (\ref{Eq.FMBigBound}) for $z_i \in \Gamma_{v_i}^+$, $w_i \in \Gamma_{u_i}^-$:
\begin{equation}\label{Eq.SMQuadraticBound}
\begin{split}
&\left|e^{- (z_1-w_2)t\tilde{\alpha} -z_1^2t + w_2^2 t} \right| \leq \exp \left(-3t[3\delta]^2 \right), \hspace{2mm} \left|e^{- (z_2-w_1)t\tilde{\beta} -z_2^2t + w_1^2 t} \right| \leq \exp \left(-3t\delta^2 \right).
\end{split}
\end{equation}

We may now apply the bounds in (\ref{Eq.CubicEstimate}), (\ref{Eq.ProdEstimate1}), (\ref{Eq.ProdEstimate2}) and (\ref{Eq.FMRemTerms}), with $z,w$ replaced with $z_1, w_2$, $k = p$, and $A = v_1$ to conclude that for some $D_1, D_2, d_1 > 0$ (depending on $v_1, a_1, \dots, a_p$), and all $z_1 \in \Gamma_{v_1}^+, w_2 \in \Gamma_{u_2}^-$
\begin{equation}\label{Eq.SMBigBound1}
\begin{split}
&\left|e^{z_1^3/3 -w_2^3/3} \cdot \frac{\Phi(z_1)}{\Phi(w_2)} \right| \leq D_1^2D_2 \exp\left( B|z_1| + B|z_1|/d_1 + 2B|w_2| -|z_1|^3/12 - |w_2|^3/12 \right),
\end{split}
\end{equation}
where we recall that $B = \sum_{i \geq 1} (a_i + b_i)$.

Using the same bounds with $k = q$ and $A = v_2$, we can find $\hat{D}_1, \hat{D}_2, \hat{d}_1 > 0$ (depending on $v_2, a_1, \dots, a_q$), such that for all $z_2 \in \Gamma_{v_2}^+, w_1 \in \Gamma_{u_1}^-$
\begin{equation}\label{Eq.SMBigBound2}
\begin{split}
&\left|e^{z_2^3/3 -w_1^3/3} \cdot \frac{\Phi(z_2)}{\Phi(w_1)} \right| \leq \hat{D}_1^2\hat{D}_2 \exp\left( B|z_2| + B|z_2|/\hat{d}_1 + 2B|w_1| -|z_2|^3/12 - |w_1|^3/12 \right).
\end{split}
\end{equation}

Combining (\ref{Eq.SMBSpec}) with (\ref{Eq.SMWellSep}), (\ref{Eq.SMQuadraticBound}), (\ref{Eq.SMBigBound1}), and (\ref{Eq.SMBigBound2}), we conclude 
\begin{equation*}
\begin{split}
&\left|\tilde{B}^t(\tilde{\alpha}, \tilde{\beta})\right| \leq D_1^2D_2  \cdot \hat{D}_1^2\hat{D}_2 \cdot \delta^{-4} \cdot \exp \left(-30t\delta^2 \right) \int_{\Gamma_{v_1}^+}  |d z_1| \int_{\Gamma_{u_1}^-} |dw_1| \int_{\Gamma_{v_2}^+}  |d z_2| \int_{\Gamma_{u_2}^-} |dw_2|\\
& e^{B|z_1| + B|z_1|/d_1 + 2B|w_2| + B|z_2| + B|z_2|/\hat{d}_1 + 2B|w_1|-|z_1|^3/12 - |w_1|^3/12 -|z_2|^3/12 - |w_2|^3/12 }.
\end{split}
\end{equation*}
The last inequality implies (\ref{Eq.SMDecB}) since the integral is finite.\\

{\bf \raggedleft Step 4.} In this step, we fix $i \in \{1,\dots, p\}$, and prove (\ref{Eq.SMDecBi}).

We first observe that 
\begin{equation}\label{Eq.SMKillPole}
\frac{1}{(1/a_j - w)\Phi(w)} = a_j\prod_{m = 1, m \neq j}^{\infty} (1 -a_m w) \cdot \prod_{m = 1}^{\infty} \frac{1}{1 + b_m w},
\end{equation}
which by (\ref{RatBound}) with $d = 1/2$ implies for all $w \in \Gamma_{\beta}^-$ with $\beta \geq 0$ that
\begin{equation}\label{Eq.SMKillPoleBound}
\left|\frac{1}{(1/a_j - w)\Phi(w)}\right| \leq a_j \cdot \exp\left(2B|w|\right).
\end{equation}

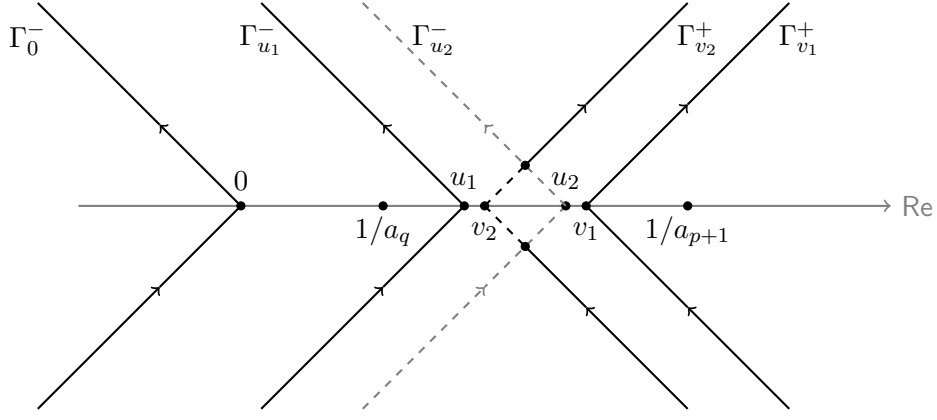
\begin{figure}[h]
    \centering
     \begin{tikzpicture}[scale=2.7]
		\begin{scope}[shift={(0,0)}]
        \def\tra{3} 
        \draw[->, thick, gray] (-2,0)--(2,0) node[right]{$\Real$};

        \draw[-,thick][black] (-1.2,0) -- (-1.6,-0.4);
        \draw[->,thick][black] (-2.2,-1) -- (-1.6,-0.4);
        \draw[black, fill = black] (-1.2,0) circle (0.02);
        \draw (-1.2, 0.125) node{$0$};
        \draw[->,thick][black] (-1.2,0) -- (-1.6,0.4);
        \draw[-,thick][black]  (-2.2,1) -- (-1.6,0.4);       

        \draw[-,thick][black] (-0.1,0) -- (-0.5,-0.4);
        \draw[->,thick][black] (-1.1,-1) -- (-0.5,-0.4);
        \draw[black, fill = black] (-0.1,0) circle (0.02);
        \draw (-0.1,0.125) node{$u_1$};
        \draw[->,thick][black] (-0.1,0) -- (-0.5,0.4);
        \draw[-,thick][black]  (-1.1,1) -- (-0.5,0.4);

        \draw[-, dashed, thick][gray] (0.4,0) -- (0,-0.4);
        \draw[->, dashed, thick][gray] (-0.6,-1) -- (0,-0.4);
        \draw[black, fill = black] (0.4,0) circle (0.02);
        \draw (0.4,0.125) node{$u_2$};
        \draw[->, dashed, thick][gray] (0.4,0) -- (0,0.4);
        \draw[-, dashed, thick][gray]  (-0.6,1) -- (0,0.4);

        \draw[-,thick][black] (0.5, 0) -- (1,-0.5);
        \draw[->,thick][black] (1.5, -1) -- (1, -0.5);
        \draw[black, fill = black] (0.5,0) circle (0.02);
        \draw (0.5,-0.125) node{$v_1$};
        \draw[->,thick][black] (0.5,0) -- (1,0.5);
        \draw[-,thick][black]  (1,0.5) -- (1.5,1);

        \draw[-, dashed, thick][black] (0, 0) -- (0.2,-0.2);
		\draw[-,thick][black]  (0.2,-0.2) -- (0.5,-0.5);

        \draw[->,thick][black] (1, -1) -- (0.5, -0.5);
        \draw[black, fill = black] (0,0) circle (0.02);
        \draw (0,-0.125) node{$v_2$};
        \draw[->,thick][black] (0.2,0.2) -- (0.5,0.5);
        \draw[-,thick][black]  (0.5,0.5) -- (1,1);
        \draw[-, dashed, thick][black]  (0,0) -- (0.2,0.2);

        \draw[black, fill = black] (1,0) circle (0.02);
        \draw (1,-0.125) node{$1/a_{p+1}$};	

        \draw[black, fill = black] (-0.5,0) circle (0.02);
        \draw (-0.5,-0.125) node{$1/a_{q}$};	

        \draw[black, fill = black] (0.2,0.2) circle (0.02);
        \draw[black, fill = black] (0.2,-0.2) circle (0.02);

        \draw (1.55,0.825) node{$\Gamma^+_{v_1}$};
        \draw (1.05,0.825) node{$\Gamma^+_{v_2}$};
        \draw (-1.1,0.825) node{$\Gamma^-_{u_1}$};
        \draw (-0.25,0.825) node{$\Gamma^-_{u_2}$};
        \draw (-2.25,0.825) node{$\Gamma^-_{0}$};

        \end{scope}

    \end{tikzpicture} 
    \caption{The figure depicts the contours $\Gamma^-_0$, $\Gamma^-_{u_1}$, $\Gamma_{u_2}^-$, $\Gamma_{v_1}^+$, $\Gamma_{v_2}^+$. The contours $\Gamma_{u_2}^-$ and $\Gamma_{v_2}^+$ intersect at two points, and $\Gamma_{v_2}^{+,0}$ is the portion of $\Gamma_{v_2}^+$ between them (drawn in dashed black). If $z_2 \in \Gamma_{v_2}^{+,0}$, then deforming $\Gamma_0^-$ to $\Gamma_{u_2}^-$ crosses $z_2$; otherwise, it does not.}
    \label{S41}
\end{figure}

We proceed to deform the $w_1,w_2$ contours $\Gamma_{0}^-$ in $B_i^{1,t}(\tilde{\alpha}, \tilde{\beta})$ to $\Gamma_{u_1}^-, \Gamma_{u_2}^-$, respectively. In the process of deforming $w_1$, in view of (\ref{Eq.SMKillPole}), we do not cross any poles. When deforming $w_2$, we cross the simple pole at $z_2$, provided that $z_2 \in \Gamma_{v_2}^{+,0}:= \{z \in \Gamma_{v_2}^{+}: |\Imag(z)| \leq (u_2-v_2)/2\}$, see Figure \ref{S41}. By the residue theorem, we conclude
\begin{equation}\label{Eq.SMResiV2}
\begin{split}
&B_{i}^{1,t}(\tilde{\alpha}, \tilde{\beta}) = B_{i}^{1,1,t}(\tilde{\alpha}, \tilde{\beta}) + B_{i}^{1,2,t}(\tilde{\alpha}, \tilde{\beta}) \mbox{, where }\\
&B_{i}^{1,1,t}(\tilde{\alpha}, \tilde{\beta}) = \frac{1}{(2\pi \im)^3} \int_{\Gamma_{u_1}^-} dw_1 \int_{\Gamma_{v_2}^+} dz_2 \int_{\Gamma_{u_2}^-} dw_2 \frac{\hat{\Phi}_i \Phi(z_2)}{\Phi(w_1) \Phi(w_2)}  \\
 & \times \frac{e^{- (1/a_i-w_2)t\tilde{\alpha}- (z_2-w_1)t\tilde{\beta}} \cdot e^{a_i^{-3}/3 - w_1^3/3-a_{i}^{-2}t + w_1^2 t} \cdot e^{z_2^3/3 - w_2^3/3-z_2^2t + w_2^2 t} }{(1/a_i - w_1)(1/a_i - w_2)(z_2 - w_1)(z_2 - w_2)}, \\
 &B_{i}^{1,2,t}(\tilde{\alpha}, \tilde{\beta}) = \hspace{-0.5mm} \frac{1}{(2\pi \im)^2} \int_{\Gamma_{u_1}^-} dw_1 \int_{\Gamma_{v_2}^{+,0}} \hspace{-0.5mm} dz_2\frac{\hat{\Phi}_i }{\Phi(w_1) } \cdot \frac{e^{- (1/a_i-z_2)t\tilde{\alpha}- (z_2-w_1)t\tilde{\beta} + a_i^{-3}/3 - w_1^3/3-a_{i}^{-2}t + w_1^2 t} }{(1/a_i - w_1)(1/a_i - z_2)(z_2 - w_1)}.
\end{split} 
\end{equation}
We mention that the contours $\Gamma_{v_2}^+$ and $\Gamma_{u_2}^-$ intersect each other at the two points: $(v_2 + u_2)/2 \pm \im (u_2 - v_2)$. The integrand is singular near each of these intersection points, due to the term $z_2 - w_2$ in the denominator. In a local parametrization around these points, the singularities are like that of the function $(x^2 + y^2)^{-1/2}$ near $(0,0)$ in $\mathbb{R}^2$, which makes them {\em integrable}. In particular, all integrals in (\ref{Eq.SMResiV2}) are well-defined and finite. \\

Setting $w_2 = u_2 + e^{\im \psi_2}r_{w_2}$ as in (\ref{Eq.SMParametrization}), we obtain
\begin{equation}\label{Eq.SMBiBoundW2}
\begin{split}
&\left| e^{-(1/a_i - w_2) t \tilde{\alpha} - a_i^{-2}t + w_2^2t} \right| = \exp\left(t (u_2 + \tilde{\alpha}/2)^2 - t (\tilde{\alpha}/2 + 1/a_i)^2 - \sqrt{2} t r_{w_2} (u_2 + \tilde{\alpha}/2) \right) \\
& = \exp\left( 9t\delta^2 - t (\tilde{\alpha}/2 + 1/a_i)^2 - 3\sqrt{2} t \delta r_{w_2} \right) \leq \exp\left( - 135 t \delta^2 \right),
\end{split}
\end{equation}
where in going from the first to the second line we used $u_2 + \tilde{\alpha}/2 = 3\delta$ from (\ref{Eq.SMDeltaEpsilon}), and in the last inequality we used (\ref{Eq.SMWellSep}) and $r_w,\delta,t\geq 0$.

Combining the second inequality in (\ref{Eq.CubicEstimate}) with $w$ replaced with $w_2$, (\ref{Eq.SMWellSep}), the second inequality in (\ref{Eq.SMQuadraticBound}), (\ref{Eq.SMBigBound2}), (\ref{Eq.SMKillPoleBound}) with $w$ replaced with $w_2$, and (\ref{Eq.SMBiBoundW2}), we conclude
\begin{equation*}
\begin{split}
&\left|B_{i}^{1,1,t}(\tilde{\alpha}, \tilde{\beta})\right| \leq D_1 \cdot \delta^{-2} \cdot \exp(-3t \delta^2) \cdot \hat{D}_1^2 \hat{D}_2    \cdot  a_i \cdot  \exp\left(-135t \delta^2 \right) \cdot |\hat{\Phi}_i| \cdot e^{a_i^{-3}/3} \\
&  \times \int_{\Gamma_{u_1}^-} |dw_1| \int_{\Gamma_{v_2}^+} |dz_2| \int_{\Gamma_{u_2}^-} |dw_2|  \frac{e^{2B|w_2| + B|z_2| + B|z_2|/\hat{d}_1 + 2B|w_1| - |z_2|^3/12 - |w_1|^3/12 - |w_2|^3/12}}{ |z_2 - w_2|}.
\end{split}
\end{equation*}
The last integral is finite, since $|z_2-w_2|^{-1}$ is locally integrable by our discussion below (\ref{Eq.SMResiV2}), while the cubic terms in the exponential provide sufficient decay near infinity. Consequently, 
\begin{equation}\label{Eq.SMB11iVanish}
\lim_{t \rightarrow \infty}B_{i}^{1,1,t}(\tilde{\alpha}, \tilde{\beta}) = 0.
\end{equation}

To analyze $B_{i}^{1,2,t}(\tilde{\alpha}, \tilde{\beta})$, we proceed to deform $\Gamma_{v_2}^{+,0}$ to $\Gamma_{u_2}^{-,0} := \{z \in \Gamma_{u_2}^-: |\Imag(z)| \leq (u_2-v_2)/2\}$. In the process of deformation, we may cross the simple pole at $1/a_i$, which happens precisely when $q < i \leq p$, see Figure \ref{S42}. By the residue theorem,
\begin{equation}\label{Eq.SMB12iV2}
\begin{split}
&B_{i}^{1,2,t}(\tilde{\alpha}, \tilde{\beta}) = \frac{1}{(2\pi \im)^2} \int_{\Gamma_{u_1}^-} dw_1 \int_{\Gamma_{u_2}^{-,0}} dz_2\frac{\hat{\Phi}_i }{\Phi(w_1) } \cdot  \frac{e^{- (1/a_i-z_2)t\tilde{\alpha}- (z_2-w_1)t\tilde{\beta}+ a_i^{-3}/3 - w_1^3/3-a_{i}^{-2}t + w_1^2 t} }{(1/a_i - w_1)(1/a_i - z_2)(z_2 - w_1)} \\
 & + \frac{{\bf 1}\{q < i \leq p\}}{2\pi \im} \cdot \int_{\Gamma_{u_1}^-} dw_1 \frac{\hat{\Phi}_i }{\Phi(w_1) } \cdot \frac{e^{- (1/a_i-w_1)t\tilde{\beta}} \cdot e^{a_i^{-3}/3 - w_1^3/3-a_{i}^{-2}t + w_1^2 t} }{(1/a_i - w_1)^2}.
\end{split}
\end{equation}

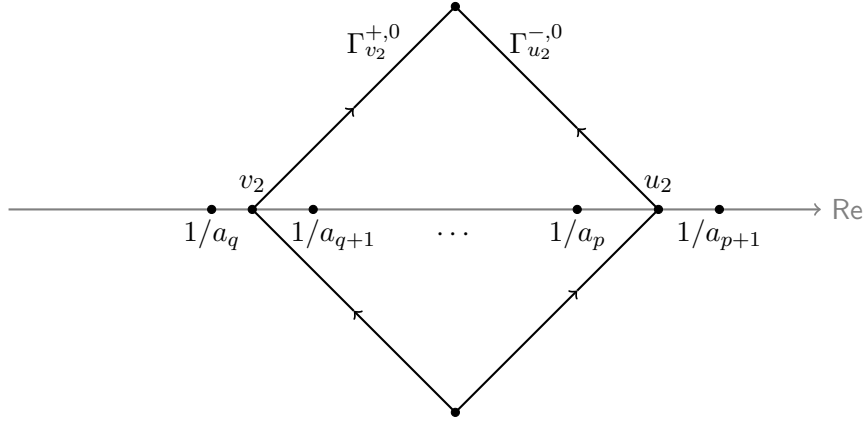
\begin{figure}[h]
    \centering
     \begin{tikzpicture}[scale=2.7]
		\begin{scope}[shift={(0,0)}]
        \def\tra{3} 
        \draw[->, thick, gray] (-2,0)--(2,0) node[right]{$\Real$};

        \draw[-, thick][black] (1.2,0) -- (0.8,-0.4);
        \draw[->, thick][black] (0.2,-1) -- (0.8,-0.4);
        \draw[black, fill = black] (1.2,0) circle (0.02);
        \draw (1.2,0.125) node{$u_2$};
        \draw[->,  thick][black] (1.2,0) -- (0.8,0.4);
        \draw[-, thick][black]  (0.2,1) -- (0.8,0.4);

        \draw[-, thick][black] (-0.8, 0) -- (-0.3,-0.5);
        \draw[->,thick][black] (0.2, -1) -- (-0.3, -0.5);
        \draw[black, fill = black] (-0.8,0) circle (0.02);
        \draw (-0.8,0.125) node{$v_2$};
        \draw[->,thick][black] (-0.8,0) -- (-0.3,0.5);
        \draw[-,thick][black]  (-0.3,0.5) -- (0.2,1);

        \draw[black, fill = black] (1.5,0) circle (0.02);
        \draw (1.5,-0.125) node{$1/a_{p+1}$};	

        \draw (0.2,-0.125) node{$\cdots$};

        \draw[black, fill = black] (-0.5,0) circle (0.02);
        \draw (-0.4,-0.125) node{$1/a_{q+1}$};	

        \draw[black, fill = black] (0.8,0) circle (0.02);
        \draw (0.8,-0.125) node{$1/a_{p}$};	

        \draw[black, fill = black] (-1,0) circle (0.02);
        \draw (-1,-0.125) node{$1/a_{q}$};

        \draw[black, fill = black] (0.2,1) circle (0.02);
        \draw[black, fill = black] (0.2,-1) circle (0.02);

        \draw (-0.2,0.825) node{$\Gamma^{+,0}_{v_2}$};
        \draw (0.6,0.825) node{$\Gamma^{-,0}_{u_2}$};

        \end{scope}

    \end{tikzpicture} 
    \caption{The figure depicts the contours $\Gamma_{u_2}^{-,0}$ and $\Gamma_{v_2}^{+,0}$. The points in $\{1/a_i\}_{m = 0}^{J_a}$  enclosed by the two contours are precisely $1/a_{q+1}, 1/a_{q+2}, \dots, 1/a_p$.}
    \label{S42}
\end{figure}

We observe that for $z_2 \in \Gamma_{u_2}^{-,0}$, we have $\Real(z_2) \geq (v_2 + u_2)/2 = -\tilde{\alpha}/4 - \tilde{\beta}/4 + 5 \delta/2$, where the last equality used (\ref{Eq.SMDeltaEpsilon}). This gives for $z_2 \in \Gamma_{u_2}^{-,0}$
\begin{equation}\label{Eq.SMB12iZ2Bound}
\left|e^{z_2 t \tilde{\alpha} - z_2 t \tilde{\beta}}\right| \leq \exp \left( - t \tilde{\alpha}^2/4 + t\tilde{\beta}^2/4 \right),
\end{equation}
where we implicitly used $t, \delta > 0$, and $\tilde{\beta} \geq \tilde{\alpha}$. In addition, from the parametrization (\ref{Eq.SMParametrization}), we have
\begin{equation}\label{Eq.SMB12iW2Bound}
\left|e^{w_1t\tilde{\beta} + w_1^2 t}  \right|  = \exp\left(9t\delta^2 - t\tilde{\beta}^2/4-3\sqrt{2}t \delta r_{w_1} \right) \leq \exp\left(9t \delta^2 - t \tilde{\beta}^2/4 \right),
\end{equation}
where we used $u_2 + \tilde{\beta}/2 = 3\delta$ from (\ref{Eq.SMDeltaEpsilon}), and in the last inequality again we used $t, \delta, r_{w_1} \geq 0$.
Combining (\ref{Eq.SMB12iZ2Bound}) with (\ref{Eq.SMB12iW2Bound}), we conclude 
\begin{equation}\label{Eq.SMB12iZW2Bound}
\begin{split}
&\left|e^{- (1/a_i-z_2)t\tilde{\alpha}- (z_2-w_1)t\tilde{\beta}} \cdot e^{-a_{i}^{-2}t + w_1^2 t} \right| \leq \exp\left( 9t\delta^2 - t(\tilde{\alpha}/2 +1/a_i)^2 \right) \leq \exp\left( - 135t\delta^2\right), \\
&\left|e^{- (1/a_i-w_1)t\tilde{\beta}} \cdot e^{-a_{i}^{-2}t + w_1^2 t} \right| \leq \exp(9t\delta^2 - t(\tilde{\beta}/2 + 1/a_i)^2) \leq \exp(-135 t \delta^2),
\end{split}
\end{equation}
where in the right inequalities we used the third line in (\ref{Eq.SMWellSep}).

Note that from (\ref{Eq.SMDeltaEpsilon}), we have $u_2 -u_1 \geq 2\delta$. This means that $\Gamma_{u_1}^-, \Gamma_{u_2}^-$ are at least $\delta$-separated, so that for $z_2 \in \Gamma_{u_2}^{-,0}$ and $w_1 \in \Gamma_{u_1}^-$ we have 
\begin{equation}\label{Eq.SMB12Sep}
\begin{split}
&\frac{1}{|z_2 - w_1|} \leq \delta^{-1}. 
\end{split}
\end{equation}

Combining the second inequality in (\ref{Eq.CubicEstimate}) with $w$ replaced with $w_1$, (\ref{Eq.SMWellSep}), (\ref{Eq.SMKillPoleBound}) with $w$ replaced with $w_1$, (\ref{Eq.SMB12iZW2Bound}), and (\ref{Eq.SMB12Sep}), we conclude
\begin{equation*}
\begin{split}
&\left|B_{i}^{1,2,t}(\tilde{\alpha}, \tilde{\beta})\right| \leq D_1 \cdot \delta^{-1} \cdot a_i \cdot \exp(-135t \delta^2) \cdot \delta^{-1} \cdot |\hat{\Phi}_i|e^{a_i^{-3}/3} \cdot \int_{\Gamma_{u_1}^-} |dw_1| \int_{\Gamma_{u_2}^{-,0}}  |dz_2| e^{2B|w_1| -|w_1|^3/12}  \\
& + D_1 \cdot \delta^{-1} \cdot \exp(-135t \delta^2) \cdot |\hat{\Phi}_i| e^{a_i^{-3}/3} \cdot \int_{\Gamma_{u_1}^-} |dw_1|  e^{2B|w_1| - |w_1|^3/12}.
\end{split}
\end{equation*}
As the last integral is finite, we conclude
\begin{equation}\label{Eq.SMB12iVanish}
\lim_{t \rightarrow \infty}B_{i}^{1,2,t}(\tilde{\alpha}, \tilde{\beta}) = 0.
\end{equation}
Combining the first line in (\ref{Eq.SMResiV2}), with (\ref{Eq.SMB11iVanish}) and (\ref{Eq.SMB12iVanish}), we conclude (\ref{Eq.SMDecBi}).\\

{\bf \raggedleft Step 5.} In this step we fix $j \in \{1, \dots, q\}$ and prove (\ref{Eq.SMDecBj}). As the proof is quite similar to that in Step 3, we will be brief. 

As in Step 3, we deform the $w_1,w_2$ contours $\Gamma_0^-$ in $B_{j}^{2,t}(\tilde{\alpha}, \tilde{\beta})$ to $\Gamma_{u_1}^-, \Gamma_{u_2}^-$, respectively. By (\ref{Eq.SMDeltaEpsilon}) and (\ref{Eq.SMKillPole}), we do not cross any poles in the process. By Cauchy's theorem,
\begin{equation}\label{Eq.SMBjV2}
\begin{split}
&B_{j}^{2,t}(\tilde{\alpha}, \tilde{\beta}) = \frac{1}{(2\pi \im)^3} \int_{\Gamma_{u_1}^-} dw_1 \int_{\Gamma_{v_1}^+} dz_1 \int_{\Gamma_{u_2}^-} dw_2 \frac{\hat{\Phi}_j \Phi(z_1)}{\Phi(w_1) \Phi(w_2)}  \\
 & \times \frac{e^{- (z_1-w_2)t\tilde{\alpha}- (1/a_j-w_1)t\tilde{\beta}} \cdot e^{z_1^3/3 - w_1^3/3-z_1^2t + w_1^2 t} \cdot e^{a_j^{-3}/3 - w_2^3/3-a_j^{-2}t + w_2^2 t} }{(1/a_j - w_1)(1/a_j - w_2)(z_1 - w_1)(z_1 - w_2)}.
\end{split} 
\end{equation}

Setting $w_1 = u_1 + e^{\im \psi_1}r_{w_1}$ as in (\ref{Eq.SMParametrization}), we obtain the following analogue of (\ref{Eq.SMBiBoundW2}):
\begin{equation}\label{Eq.SMBjBoundW1}
\begin{split}
&\left| e^{-(1/a_j - w_1) t \tilde{\beta} - a_j^{-2}t + w_1^2t} \right| = \exp\left(t (u_1 + \tilde{\beta}/2)^2 - t (\tilde{\beta}/2 + 1/a_j)^2 - \sqrt{2} t r_{w_1} (u_1 + \tilde{\beta}/2) \right) \\
& \leq \exp\left( t\delta^2 - t (\tilde{\beta}/2 + 1/a_j)^2 \right) \leq \exp\left( - 143 t \delta^2 \right),
\end{split}
\end{equation}
where we used $u_1 + \tilde{\beta}/2 = \delta$ from (\ref{Eq.SMDeltaEpsilon}), and (\ref{Eq.SMWellSep}).

Combining (\ref{Eq.SMBjV2}) with the second inequality in (\ref{Eq.CubicEstimate}) with $w$ replaced with $w_1$, (\ref{Eq.SMWellSep}), the first inequality in (\ref{Eq.SMQuadraticBound}), (\ref{Eq.SMBigBound1}), (\ref{Eq.SMKillPoleBound}) with $w$ replaced with $w_1$, and (\ref{Eq.SMBjBoundW1}), we conclude
\begin{equation*}
\begin{split}
&\left|B_{j}^{2,t}(\tilde{\alpha}, \tilde{\beta})\right| \leq D_1 \cdot \delta^{-3} \cdot \exp\left(-27t\delta^2 \right) \cdot D_1^2 D_2 \cdot a_j \cdot \exp\left( - 143 t \delta^2 \right) \cdot |\hat{\Phi}_j| e^{a_j^{-3}/3} \cdot \\
&\times \int_{\Gamma_{u_1}^-} |dw_1| \int_{\Gamma_{v_1}^+} |dz_1| \int_{\Gamma_{u_2}^-} |dw_2| e^{B |z_1| + B|z_1|/d_1 + 2B|w_2| + 2B|w_1| - |w_1|^3/12 - |w_2|^3/12 - |z_1|^3/12 }.
\end{split}
\end{equation*}
The last inequality implies (\ref{Eq.SMDecBj}).\\

{\bf \raggedleft Step 6.} In this step we fix $i \in \{1, \dots, p\}$ and $j \in \{1, \dots, q\}$ with $i \neq j$, and prove (\ref{Eq.SMDecResij}). 

As in Step 3, we deform the $w_1,w_2$ contours $\Gamma_0^-$ in $B_{ij}^{t}(\tilde{\alpha}, \tilde{\beta})$ to $\Gamma_{u_1}^-, \Gamma_{u_2}^-$, respectively. By (\ref{Eq.SMKillPole}), we do not cross any poles in the process, and so, by Cauchy's theorem,
\begin{equation}\label{Eq.SMBijV2}
\begin{split}
&B_{ij}^t(\tilde{\alpha}, \tilde{\beta}) = \frac{1}{(2\pi \im)^2} \int_{\Gamma_{u_1}^-} dw_1  \int_{\Gamma_{u_2}^-} dw_2 \frac{\hat{\Phi}_i\hat{\Phi}_j}{\Phi(w_1) \Phi(w_2)}  \\
 & \times \frac{e^{- (1/a_i-w_2)t\tilde{\alpha}- (1/a_j-w_1)t\tilde{\beta}} \cdot e^{a_i^{-3}/3 - w_1^3/3-a_{i}^{-2}t + w_1^2 t} \cdot e^{a_j^{-3}/3 - w_2^3/3-a_{j}^{-2}t + w_2^2 t} }{(1/a_i - w_1)(1/a_j - w_1)(1/a_i - w_2)(1/a_j - w_2)}.
\end{split} 
\end{equation} 

Combining (\ref{Eq.SMBijV2}) with the second inequality in (\ref{Eq.CubicEstimate}) with $w$ replaced with $w_1$ or $w_2$, (\ref{Eq.SMWellSep}), (\ref{Eq.SMKillPoleBound}) with $w$ replaced with $w_1$ or $w_2$, (\ref{Eq.SMBiBoundW2}) and (\ref{Eq.SMBjBoundW1}), we conclude
 \begin{equation*}
\begin{split}
&\left|B_{ij}^t(\tilde{\alpha}, \tilde{\beta})\right| \leq D_1^2 \cdot \delta^{-2} \cdot a_ia_j \cdot \exp\left(-135t\delta^2\right) \cdot \exp\left(-143 t \delta^2\right)\\
&\times \int_{\Gamma_{u_1}^-} |dw_1|  \int_{\Gamma_{u_2}^-} |dw_2| e^{2B|w_1| + 2B|w_2| - |w_1|^3/12 - |w_2|^3/12}.
\end{split} 
\end{equation*} 
The last inequality implies (\ref{Eq.SMDecResij}).\\

{\bf \raggedleft Step 7.} In this step we fix $j \in \{1, \dots, q\}$, and prove (\ref{Eq.SMDecResii}). 

As in Step 3, we deform the $w_1,w_2$ contours $\Gamma_0^-$ in $B_{jj}^{t}(\tilde{\alpha}, \tilde{\beta})$ to $\Gamma_{u_1}^-, \Gamma_{u_2}^-$, respectively. By (\ref{Eq.SMKillPole}), in the process of deformation we cross the simple poles at $w_1 = 1/a_j$ and $w_2 = 1/a_j$. By the residue theorem, we conclude
\begin{equation}\label{Eq.SMBjjV2}
B_{jj}^t(\tilde{\alpha}, \tilde{\beta}) = 1 + B_{jj}^{1,t}(\tilde{\alpha}, \tilde{\beta}) + B_{jj}^{2,t}(\tilde{\alpha}, \tilde{\beta}) + B_{jj}^{3,t}(\tilde{\alpha}, \tilde{\beta}),
\end{equation}
where 
\begin{equation*}
\begin{split}
&B_{jj}^{1,t}(\tilde{\alpha}, \tilde{\beta}) = \frac{1}{2\pi \im}  \int_{\Gamma_{u_2}^-} dw_2 \frac{\hat{\Phi}_j}{ \Phi(w_2)} \cdot \frac{e^{- (1/a_j-w_2)t\tilde{\alpha}} \cdot e^{a_j^{-3}/3 - w_2^3/3-a_{j}^{-2}t + w_2^2 t} }{(1/a_j - w_2)^2},
\end{split}
\end{equation*}
\begin{equation*}
\begin{split}
&B_{jj}^{2,t}(\tilde{\alpha}, \tilde{\beta}) = \frac{1}{2\pi \im}  \int_{\Gamma_{u_1}^-} dw_1 \frac{\hat{\Phi}_j}{ \Phi(w_1)} \cdot \frac{e^{- (1/a_j-w_1)t\tilde{\beta}} \cdot e^{a_j^{-3}/3 - w_1^3/3-a_{j}^{-2}t + w_1^2 t} }{(1/a_j - w_1)^2},
\end{split}
\end{equation*}
\begin{equation*}
\begin{split}
&B_{jj}^{3,t}(\tilde{\alpha}, \tilde{\beta}) = \frac{1}{(2\pi \im)^2} \int_{\Gamma_{u_1}^-} dw_1  \int_{\Gamma_{u_2}^-} dw_2 \frac{\hat{\Phi}_j^2}{\Phi(w_1) \Phi(w_2)}  \\
 & \times \frac{e^{- (1/a_j-w_2)t\tilde{\alpha}- (1/a_j-w_1)t\tilde{\beta}} \cdot e^{a_j^{-3}/3 - w_1^3/3-a_{j}^{-2}t + w_1^2 t} \cdot e^{a_j^{-3}/3 - w_2^3/3-a_{j}^{-2}t + w_2^2 t} }{(1/a_j - w_1)^2(1/a_j - w_2)^2}.
\end{split} 
\end{equation*}

Combining the last three displayed equations with the second inequality in (\ref{Eq.CubicEstimate}) with $w$ replaced with $w_1$ or $w_2$, (\ref{Eq.SMWellSep}), (\ref{Eq.SMKillPoleBound}) with $w$ replaced with $w_1$ or $w_2$, (\ref{Eq.SMBiBoundW2}) and (\ref{Eq.SMBjBoundW1}), we conclude
\begin{equation*}
\begin{split}
&\left|B_{jj}^{1,t}(\tilde{\alpha}, \tilde{\beta}) \right| \leq D_1 \cdot \delta^{-1} \cdot a_j \cdot \exp\left(-135t\delta^2 \right) \cdot |\hat{\Phi}_j| e^{a_j^{-3}/3} \cdot \int_{\Gamma_{u_2}^-} |dw_2| e^{2B|w_2| - |w_2|^3/12},
\end{split}
\end{equation*}
\begin{equation*}
\begin{split}
&\left|B_{jj}^{2,t}(\tilde{\alpha}, \tilde{\beta}) \right| \leq D_1 \cdot \delta^{-1} \cdot a_j \cdot \exp\left(-143t\delta^2 \right) \cdot |\hat{\Phi}_j| e^{a_j^{-3}/3} \cdot \int_{\Gamma_{u_1}^-} |dw_1| e^{2B|w_1| - |w_1|^3/12},
\end{split}
\end{equation*}
\begin{equation*}
\begin{split}
&\left|B_{jj}^{3,t}(\tilde{\alpha}, \tilde{\beta})\right| \leq D_1^2 \cdot \delta^{-2} \cdot a_j^2 \cdot \exp\left(-278t\delta^2 \right) \cdot |\hat{\Phi}_j|^2 e^{2a_j^{-3}/3} \\
& \times \int_{\Gamma_{u_1}^-} |dw_1|\int_{\Gamma_{u_2}^-} |dw_2| e^{2B|w_1| + 2B|w_2| - |w_1|^3/12 - |w_2|^3/12}.
\end{split} 
\end{equation*}
The last three displayed equations and (\ref{Eq.SMBjjV2}) imply (\ref{Eq.SMDecResii}).
\end{proof}

%
%
\subsection{First moments of $\tilde{M}^t$}\label{Section4.3} The goal of this section is to establish the following key lemma.
\begin{lemma}\label{Lem.FirstMomentFlat} Assume the same notation as in Definition \ref{Def.SlopesPar}, and suppose $J_a < \infty$. For $\tilde{M}^t$ as in (\ref{Eq.MeasT}), we have
\begin{equation}\label{Eq.FirstMomentFlat}
\lim_{\hat{\alpha} \rightarrow \infty} \limsup_{t \rightarrow \infty} \left|\mathbb{E}\left[\tilde{M}^t[\hat{\alpha}, \infty)  \right] - J_a\right| = 0.
\end{equation}
\end{lemma}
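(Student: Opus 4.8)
The plan is to mimic the structure of the proof of Lemma \ref{Lem.FirstMoment}, but with the measure $\tilde{M}^t$ in place of $M^t$. Recall that $\tilde{M}^t$ counts the (un-rescaled) points $\mathcal{A}^{a,b,c}_i(t)$, so by Proposition \ref{S12AWD} and \cite[(2.13), (2.18)]{dimitrov2024airy} it is a determinantal point process with kernel $K_{a,b,c}(t,\cdot;t,\cdot)$ and Lebesgue reference measure, whence
\[
\mathbb{E}\left[\tilde{M}^t[\hat{\alpha},\infty)\right] = \int_{\hat{\alpha}}^{\infty} K_{a,b,c}(t,x;t,x)\,dx.
\]
First I would substitute the double-contour formula for $K_{a,b,c}$ from Definition \ref{3BPKernelDef} (after the change of variables $z \mapsto z - t$, $w \mapsto w - t$ as in (\ref{Eq.KernelT}), so the $\Phi$ ratio becomes $\prod_i \frac{(1+b_iz)(1-a_iw)}{(1-a_iz)(1+b_iw)}$), exchange the order of integration using the estimates (\ref{Eq.CubicEstimate})--(\ref{Eq.ProdEstimate2}) exactly as below (\ref{Eq.FME1}), and perform the $x$-integral over $[\hat{\alpha},\infty)$. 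Since now there is no parabolic rescaling, the exponent in $x$ is simply $e^{-(z-w)x}$ times $x$-independent factors; integrating over $[\hat{\alpha},\infty)$ picks up a factor $\frac{e^{-(z-w)\hat{\alpha}}}{z-w}$ (rather than $(z-w)^{-2}$ as in the rescaled case), giving a formula of the shape
\[
\mathbb{E}\left[\tilde{M}^t[\hat{\alpha},\infty)\right] = \frac{1}{(2\pi\im)^2}\int_{\Gamma_\alpha^+}dz\int_{\Gamma_0^-}dw\,\frac{e^{z^3/3 - w^3/3 - (z-w)\hat{\alpha}}}{z-w}\cdot\frac{\Phi(z)}{\Phi(w)},
\]
with $0 < \alpha < 1/a_1$ and $\Phi(z) = \prod_{i\ge 1}\frac{1+b_iz}{1-a_iz}$ (here I use that $t$ enters only through $e^{-z^2 t + w^2 t}$ which, after the shift, cancelled — wait, one must be careful: the $t$-dependence is genuinely present through $e^{(w^2 - z^2)t}$ and $e^{(z-w)(x+t^2) \cdots}$ type terms; in fact after setting $x_1=x_2=x$ and integrating, the $t$-dependence survives in the form $e^{(w^2-z^2)t}$ only if $\hat\alpha$ is not rescaled — I would track this carefully, but the upshot is the integrand carries a Gaussian-in-$t$ damping factor $e^{-(z^2-w^2)t}$ off the relevant contours).

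The key structural point is that as $\hat{\alpha}\to\infty$, the factor $e^{-(z-w)\hat{\alpha}}$ forces the dominant contribution to localize near the poles of $\Phi(z)$ at $z = 1/a_j$ with the largest real part, i.e. the smallest $a_j$; but ALL of the $J_a$ poles $1/a_1,\dots,1/a_{J_a}$ have $\mathrm{Re} > 0$, and for $\hat{\alpha}$ large the residue contribution from each of the $J_a$ simple poles tends to a constant (which I expect to equal $1$, by the same residue-equals-$1$ computation used to get (\ref{Eq.FMExpanded}) from the $w$-deformation), while the remaining double-integral tail tends to $0$. Concretely: deform $\Gamma_\alpha^+$ rightward past the $J_a$ simple poles $1/a_1,\dots,1/a_{J_a}$ (legitimate since $J_a<\infty$, so $\Phi$ has no further poles to the right and the cubic terms give decay at infinity), picking up $J_a$ single-contour residue integrals plus a leftover double integral over $\Gamma_v^+\times\Gamma_0^-$ with $v$ chosen to the right of $1/a_{J_a}$. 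Then in each residue integral, deform the $w$-contour $\Gamma_0^-$ rightward past the poles it now sees; the residues there evaluate to $1$ exactly as in the proof of Lemma \ref{Lem.FirstMoment} (this is where the "$J_a$" comes from), leaving single integrals. Finally, bound every leftover integral: the double-integral tail is controlled by $e^{-(z-w)\hat{\alpha}}$ with $\mathrm{Re}(z-w) \ge v > 0$ uniformly (combined with the cubic decay and the product bounds (\ref{Eq.ProdEstimate1})--(\ref{Eq.ProdEstimate2})), so it is $O(e^{-c\hat\alpha})$ uniformly in $t$; and each leftover single integral, after the $w$-deformation to some $\Gamma_u^-$ with $u > 0$, carries $e^{(1/a_j - w)\hat{\alpha}}$ with $\mathrm{Re}(1/a_j - w) < 0$, hence is also $O(e^{-c\hat\alpha})$ uniformly in $t$. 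Taking $\limsup_{t\to\infty}$ and then $\hat\alpha\to\infty$ gives (\ref{Eq.FirstMomentFlat}).

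The main obstacle I anticipate is bookkeeping the $t$-dependence correctly and uniformly. Unlike in Lemma \ref{Lem.FirstMoment}, here $t\to\infty$ is not the variable producing the Gaussian damping via a parabolic shift of the spectral parameter; rather, we want estimates that are \emph{uniform in $t$} (for the $\limsup$) and depend only on $\hat\alpha$. So I must choose the contour positions $v$ (for $z$) and $u$ (for $w$) independently of $t$, verify that the pole-crossing bookkeeping does not depend on $t$, and check that the exponential factors $e^{(w^2-z^2)t}$ that appear have nonpositive real part on the chosen contours (which holds because on $\Gamma_v^+$ one has $\mathrm{Re}(z^2) = v^2 + \sqrt2 v r_z$ increasing and on $\Gamma_u^-$ one has $\mathrm{Re}(w^2) = u^2 - \sqrt2 u r_w$ decreasing, so $\mathrm{Re}(w^2 - z^2) \le u^2 - v^2 < 0$ once $v > u > 0$), so the $t$-dependence only helps and can be bounded by $1$. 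A secondary subtlety: after deforming past $1/a_{J_a}$ to a contour $\Gamma_v^+$ with $v$ large, I need $v < $ nothing (no further pole constraint since $J_a<\infty$), but I should keep $v$ fixed as $\hat\alpha\to\infty$ so that the $e^{-(z-w)\hat\alpha}$ gain is genuine; this is fine. Modulo these care points, the argument is a direct and somewhat simpler variant of the proof of Lemma \ref{Lem.FirstMoment}, with the role of the large parameter shifted from $t$ to $\hat\alpha$.
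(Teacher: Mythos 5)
Your high-level plan — write $\mathbb{E}[\tilde{M}^t[\hat\alpha,\infty)]$ as a double-contour integral, deform the $z$-contour across the $J_a$ simple poles of $\Phi$ to pick up $J_a$ from the residues, and show the remainders are $O(e^{-c\hat\alpha})$ uniformly in large $t$ — is exactly the paper's strategy. But your execution has a genuine gap in the uniformity-in-$t$ step. You insist on choosing the contours $\Gamma_v^+$, $\Gamma_u^-$ at fixed, $t$-independent positions $v > 1/a_{J_a} > u > 0$, and you claim the $t$-dependence of the exponent enters only through $e^{(w^2-z^2)t}$, whose real part is $\le (u^2-v^2)t < 0$. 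This is false. Expanding the cubic in $(\ref{Eq.KernelTF})$,
\begin{equation*}
\frac{(z-t)^3}{3} - \frac{(w-t)^3}{3} = \frac{z^3 - w^3}{3} - (z^2-w^2)t + (z-w)t^2,
\end{equation*}
so there is also a term $(z-w)t^2$ with $\Real(z-w) \ge v - u > 0$ on your contours. Its real part grows like $(v-u)t^2$ and overwhelms the $-(v^2-u^2)t$ damping, so the modulus of the integrand blows up as $t\to\infty$ and your uniform bound fails. (There is also a minor slip: integrating $e^{-(z-w)x}$ against the kernel's existing $(z-w)^{-1}$ produces $(z-w)^{-2}$, as in $(\ref{Eq.FMFE1})$, not $(z-w)^{-1}$.)

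The paper gets around the $t^2$ problem by taking contours that track $t$: it sets $u = t$, $v = t+1$ and deforms to $\Gamma_{t+1}^+$, $\Gamma_t^-$, which for $t$ large lie to the right of all the finite poles $1/a_1, \dots, 1/a_{J_a}$. After picking up the $J_a$ residues and substituting $z \mapsto z+t$, $w \mapsto w+t$, the cubic collapses to the $t$-free $z^3/3 - w^3/3$ on $\Gamma_1^+, \Gamma_0^-$, and the only residual $t$-dependence sits in the ratio $\Phi(z+t)/\Phi(w+t)$, which for $t$ large is bounded uniformly (estimates $(\ref{Eq.FMFBound1})$, $(\ref{Eq.FMFB1})$). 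On those shifted contours $\Real(z-w)\ge 1$, so $|e^{-(z-w)\hat\alpha}|\le e^{-\hat\alpha}$, and the same bound controls the single-integral residue remainders. To repair your argument you would need to replace your fixed $v, u$ by this $t$-dependent (or, after the shift, centered-at-$t$) choice; with that change your bookkeeping of poles and residues goes through as you describe.
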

\begin{proof} The proof we present is similar to that of Lemma \ref{Lem.FirstMoment}, and so we will be brief. For clarity, we split the proof into three steps. In the first step, we derive a double-contour integral formula for $\mathbb{E}\left[\tilde{M}^t[\hat{\alpha}, \infty)  \right]$, see (\ref{Eq.FMFE1}). In the second step, we deform the contours from (\ref{Eq.FMFE1}), and reduce the proof of the lemma to establishing that certain single or double integrals converge to zero as $t \rightarrow \infty$, see (\ref{Eq.FMFE3}) and (\ref{Eq.FMFE4}), which we establish in the third step.\\

{\bf \raggedleft Step 1.} From Proposition \ref{S12AWD} and \cite[Lemma 2.17]{dimitrov2024airy}, we know that $\tilde{M}^{t}$ is a determinantal point process on $\mathbb{R}$ with correlation kernel $\tilde{K}^t(x,y) = K_{a,b,c}(t,x;t,y)$ and Lebesgue reference measure. Using Definition \ref{3BPKernelDef}, and changing variables $z \mapsto z - t$ and $w \mapsto w - t$, we can write $\tilde{K}^t(x,y)$ as 
\begin{equation}\label{Eq.KernelTF}
 \tilde{K}^t(x,y) = \frac{1}{(2\pi \im)^2} \int_{\Gamma_{\alpha }^+}  d z \int_{\Gamma_{0}^-}  dw \frac{e^{(z-t)^3/3 - (w-t)^3/3 - (z-t)x+ (w-t)y}}{z - w } \cdot \frac{\Phi(z)}{\Phi(w)},
\end{equation}
where $\Gamma_{\alpha}^+, \Gamma_{0}^-$ are as in Definition \ref{DefContInf} with $0 < \alpha < 1/a_1$, and $\Phi(z) = \prod_{i \geq 1}^{\infty} \frac{1 + b_i z}{1 - a_i z}$.

From \cite[(2.13) and (2.18)]{dimitrov2024airy} 
\begin{equation*}
\begin{split}
 \mathbb{E}\left[\tilde{M}^{t}[\hat{\alpha}, \infty)  \right] = \int_{\hat{\alpha}}^{\infty}\tilde{K}^t(x,x)dx. 
 \end{split} 
\end{equation*}
Substituting $\tilde{K}^t(x,x)$ from (\ref{Eq.KernelTF}), exchanging the order of the integrals, and performing the integral over $x$, we obtain
\begin{equation}\label{Eq.FMFE1}
\begin{split}
 \mathbb{E}\left[\tilde{M}^{t}[\hat{\alpha}, \infty)  \right] = \frac{1}{(2\pi \im)^2} \int_{\Gamma_{\alpha }^+}  d z \int_{\Gamma_{0}^-}  dw \frac{e^{(z-t)^3/3 - (w-t)^3/3 - (z-w)\hat{\alpha}}}{(z - w)^2 } \cdot \frac{\Phi(z)}{\Phi(w)}.
 \end{split} 
\end{equation}
The exchange of the order of integration is justified by the same estimates stated below (\ref{Eq.FME1}).\\

{\bf \raggedleft Step 2.} Suppose $t > 1 + 1/a_{J_a}$, and put $ u = t$, $v = t + 1$.

We now proceed to deform $\Gamma_{\alpha}^+$ to $\Gamma_{v}^+$ in (\ref{Eq.FMFE1}). In the process of deformation we cross the {\em simple} poles at $z = 1/a_j$ for $j = 1, \dots, J_a$ (coming from $\Phi(z)$). By the residue theorem, 
\begin{equation}\label{Eq.FMFE2}
\begin{split}
&\mathbb{E}\left[\tilde{M}^{t}[\hat{\alpha}, \infty)  \right] = \sum_{j = 1}^{J_a} \frac{1}{2\pi \im}  \int_{\Gamma_{0}^-} dw \frac{e^{(1/a_j-t)^3/3 - (w- t)^3/3 - (1/a_j-w)\hat{\alpha}}}{(1/a_j - w)^2 } \cdot \frac{\hat{\Phi}_j}{\Phi(w)} \\
 & + \frac{1}{(2\pi \im)^2} \int_{\Gamma_{v}^+}  d z \int_{\Gamma_{0}^-}  dw \frac{e^{(z-t)^3/3 - (w-t)^3/3 - (z-w)\hat{\alpha}}}{(z - w)^2 } \cdot \frac{\Phi(z)}{\Phi(w)},
 \end{split} 
\end{equation}
where $\hat{\Phi}_j = \prod_{i = 1, i \neq j}^{\infty} \frac{(1 + b_i /a_j)}{(1 - a_i/a_j)} \cdot \frac{(1 + b_j/a_j)}{a_j}$.

We next deform $\Gamma_{0}^-$ to $\Gamma_{u}^-$ in (\ref{Eq.FMFE2}). In the process of deformation we cross the simple poles at $w = 1/a_j$ for $j = 1, \dots, J_a$ within the first line, and do not cross any poles for the second. Applying the residue theorem, and the change of variables $z \mapsto z+t$, $w \mapsto w+t$, we conclude
\begin{equation}\label{Eq.FMFExpanded}
\mathbb{E}\left[\tilde{M}^{t}[\hat{\alpha}, \infty)  \right] = J_a + \tilde{A}^t + \sum_{j = 1}^{J_a} \tilde{A}^{j,t},
\end{equation}
where
\begin{equation}\label{Eq.FMFAT}
\tilde{A}^t = \frac{1}{(2\pi \im)^2} \int_{\Gamma_{1}^+}  d z \int_{\Gamma_{0}^-}  dw \frac{e^{z^3/3 - w^3/3 - (z-w)\hat{\alpha}}}{(z - w)^2 } \cdot \frac{\Phi(z+t)}{\Phi(w+t)},
\end{equation}
and for $j = 1,\dots, J_a$
\begin{equation}\label{Eq.FMFAJT}
\tilde{A}^{j,t} =  \frac{1}{2\pi \im}  \int_{\Gamma_{0}^-} dw \frac{e^{(1/a_j - t)^3/3  - w^3/3 - (1/a_j-w-t)\hat{\alpha}}}{(1/a_j - w - t)^2 } \cdot \frac{\hat{\Phi}_j}{\Phi(w+t)}.
\end{equation}

From (\ref{Eq.FMFExpanded}), we see that to show (\ref{Eq.FirstMomentFlat}), it suffices to prove
\begin{equation}\label{Eq.FMFE3}
\begin{split}
&\lim_{\hat{\alpha} \rightarrow \infty} \limsup_{t \rightarrow \infty} |\tilde{A}^t| = 0,
 \end{split} 
\end{equation}
and for $j = 1, \dots, J_a$ that
\begin{equation}\label{Eq.FMFE4}
\begin{split}
&\lim_{\hat{\alpha} \rightarrow \infty} \limsup_{t \rightarrow \infty} |\tilde{A}^{j,t}| = 0.
 \end{split} 
\end{equation}

{\bf \raggedleft Step 3.} In this step, we prove (\ref{Eq.FMFE3}) and (\ref{Eq.FMFE4}).

Fix $j \in \{1, \dots, J_a\}$. From (\ref{RatBound}) with $d = 1/2$, we have for $w \in \Gamma_0^-$ and $t \geq 0$
\begin{equation}\label{Eq.FMFB1}
|\Phi(w+t)^{-1}|\leq e^{2B|w| + 2Bt},
\end{equation}
where $B = \sum_{i \geq 1} (a_i + b_i)$. Also, if $t > 2 + 1/a_{J_a}$ and $\hat{\alpha} \geq 0$, we have the trivial bounds
\begin{equation}\label{Eq.FMFB2}
|1/a_j -w -t|^{-1} \leq 1 \mbox{, and } |e^{w\hat{\alpha}}| \leq 1 \mbox{ for }w \in \Gamma_{0}^-.
\end{equation}

Combining (\ref{Eq.FMFAJT}) with (\ref{Eq.CubicEstimate}), (\ref{Eq.FMFB1}) and (\ref{Eq.FMFB2}), we get for $t > 2 + 1/a_{J_a}$ and $\hat{\alpha} \geq 0$
\begin{equation*}
|\tilde{A}^{j,t} | \leq  D_1 \cdot |\hat{\Phi}_j| \cdot e^{(1/a_j - t)^3/3+ (t-1/a_j)\hat{\alpha} + 2Bt} \int_{\Gamma_{0}^-}|dw| e^{2B|w|- |w|^3/12 },
\end{equation*}
which implies (\ref{Eq.FMFE4}).\\

We next observe for $t, b > 0$, $z \in \Gamma_1^+$, $w \in \Gamma_0^-$, that
$$\frac{|1+b(z+t)|}{|1+b(w+t)|} = \left|\frac{1 + \frac{bz}{1+bt}}{1 + \frac{bw}{1 + bt}}\right| \leq \exp\left(\frac{b|z|}{1+bt} + \frac{2b|w|}{1+bt} \right) \leq \exp(2b|z|+2b|w|),$$
where in the next to last inequality we used (\ref{RatBound}) with $d = 1/2$. In addition, if $J_a > 0$, $a \in [a_{J_a}, a_1]$, $t > 2/a_{J_a}$, $z \in \Gamma_1^+$, $w \in \Gamma_0^-$
$$\frac{|1-a(w+t)|}{|1-a(z+t)|} = \left|\frac{1 + \frac{aw}{at-1}}{1 + \frac{az}{at-1}} \right| \leq \left|1 + \frac{aw}{at-1}\right| \leq \exp\left(\frac{a|w|}{at-1} \right) \leq \exp(a|w|).$$

The last two equations give for $t > 2/a_{J_a}$, $z \in \Gamma_1^+$, $w \in \Gamma_0^-$ 
\begin{equation}\label{Eq.FMFBound1}
\frac{|\Phi(z+t)|}{|\Phi(w+t)|} \leq \exp\left(2B|z| + 2B|w| \right).
\end{equation}
In addition, using that $\Real(z-w) \geq 1$ for $z \in \Gamma_1^+$, $w \in \Gamma_0^-$, we conclude for $\hat{\alpha} \geq 0$
\begin{equation}\label{Eq.FMFBound2}
\left|e^{-(z-w)\hat{\alpha}}\right| \leq e^{-\hat{\alpha}}, \hspace{2mm} |z-w|^{-1} \leq 1.
\end{equation}

Combining (\ref{Eq.FMFAT}) with (\ref{Eq.CubicEstimate}), (\ref{Eq.FMFBound1}) and (\ref{Eq.FMFBound2}), we get for $\hat{\alpha} \geq 0$, and $t > 2/a_{J_a}$
\begin{equation*}
|\tilde{A}^{t} | \leq  D_1^2 \cdot e^{-\hat{\alpha}}\int_{\Gamma_{1}^+}  |d z| \int_{\Gamma_{0}^-}  |dw| e^{2B|z| + 2B|w| -|z|^3/12 - |w|^3/12 },
\end{equation*}
which implies (\ref{Eq.FMFE3}).
\end{proof}

%
%
\section{Asymptotic slopes}\label{Section5} The goal of this section is to prove Theorem \ref{Thm.Slopes}. Throughout this section we continue with the same notation as in the statement of the theorem.

%
%
\subsection{Proof of Theorem \ref{Thm.Slopes}(a,b)}\label{Section5.1} Observe that part (b) follows from part (a) and Proposition \ref{S13P1}(b). Consequently, it suffices to prove part (a). We split the proof into two steps.\\

{\bf \raggedleft Step 1.} In this step, we suppose $a_i^+ > a^{+}_{i+1}$ for $i \in \{1, \dots, J_a^+\}$.

Recall $M^{t_N}$ from (\ref{Eq.MeasT}):
\begin{equation*}
M^{t_N}(A) = \sum_{i \geq 1} {\bf 1}\left\{ t_N^{-1} \cdot \left(\mathcal{A}^{a,b,c}_i(t_N) - t_N^2 \right)  \in A \right\}.
\end{equation*}
Fix $k \in \mathbb{N}$ with $k \leq J_a^+$, and $\varepsilon > 0$. We can find $\rho \in (0, \varepsilon)$ sufficiently small so that if we set
$$\tilde{\alpha}_i = -2/a_i^+ -\rho \mbox{ and } \tilde{\beta}_i = -2/a_i^+ + \rho, \mbox{ for }i = 1, \dots, k,$$
we have 
$$-2/a^+_{k+1} < \tilde{\alpha}_k < -2/a_k^+ < \tilde{\beta}_k < \tilde{\alpha}_{k-1} < -2/a_{k-1}^+ < \tilde{\beta}_{k-1} < \cdots < \tilde{\alpha}_{1} < -2/a_{1}^+ < \tilde{\beta}_{1} < 0,$$
where, as usual, $-2/0 = -\infty$. We further set for $i = 1, \dots, k$ 
$$X^N_i = M^{t_N}[\tilde{\alpha}_i, \infty), \hspace{2mm} Y^N_i = M^{t_N}[\tilde{\beta}_i, \infty), \hspace{2mm} Z^N_i = M^{t_N}[\tilde{\alpha}_i, \tilde{\beta}_i).$$

In (\ref{Eq.FME1}) we showed that $X_i^N, Y_i^N$ have finite means, which implies $X_i^N, Y_i^N, Z_i^N \in \mathbb{Z}_{\geq 0}$ almost surely and $Z_i^N = X_i^N - Y_i^N$. From Lemma \ref{Lem.FirstMoment}, we have for $i = 1, \dots, k$ 
\begin{equation}\label{Eq.MeansXConv}
\lim_{N \rightarrow \infty} \mathbb{E}\left[X_i^N \right] = i, \hspace{2mm} \lim_{N \rightarrow \infty} \mathbb{E}\left[Y_i^N \right] = i-1.
\end{equation}
The last equation and Lemma \ref{Lem.SecondMoment} give
\begin{equation}\label{Eq.SecondZConv}
\lim_{N \rightarrow \infty} \mathbb{E}\left[Z_i^N \right] = 1, \hspace{2mm} \lim_{N \rightarrow \infty} \mathbb{E}\left[Z_i^N (Z_i^N - 1) \right] = 0.
\end{equation}
The last equation implies $Z_i^N \Rightarrow 1$ for $i = 1, \dots, k$. 

Define $W_1^N = Y_1^N$, and for $i = 2, \dots, k$ set $W_i^N = M^{t_N}[\tilde{\beta}_i, \tilde{\alpha}_{i-1}) = Y_i^{N} - X_{i-1}^N$. From (\ref{Eq.MeansXConv}), we conclude $W_i^N \Rightarrow 0$ for $i = 1, \dots, k$. Since $Z_i^N, W_i^N \in \mathbb{Z}_{\geq 0}$ (which is a discrete set), we conclude that 
\begin{equation}\label{Eq.LimProbabSlopes}
\lim_{N \rightarrow \infty} \mathbb{P}(Z_i^N = 1, \hspace{2mm} W_i^N = 0 \mbox{ for } i =1 ,\dots, k) = 1.
\end{equation} 

We now observe that we have the following equality of events
$$\{ Z_i^N = 1, \hspace{2mm} W_i^N = 0 \mbox{ for } i =1 ,\dots, k \} = \left\{\tilde{\alpha}_i \leq t_N^{-1} \cdot (\mathcal{A}^{a,b,c}_i(t_N) - t_N^2) < \tilde{\beta}_i \mbox{ for } i =1 ,\dots, k \right\}.$$
Combining the last displayed equation with (\ref{Eq.LimProbabSlopes}), and the definitions of $\tilde{\alpha}_i, \tilde{\beta}_i$, we conclude 
$$\lim_{N \rightarrow \infty} \mathbb{P} \left( \left|t_N^{-1} \cdot (\mathcal{A}^{a,b,c}_i(t_N) - t_N^2) + 2/a_i^+ \right| > \rho \mbox{ for some $i\in \{1, \dots, k\}$}\right) = 0,$$
which implies the statement of the theorem as $\rho \in (0, \varepsilon)$ and $\varepsilon > 0$ was arbitrary.\\

{\bf \raggedleft Step 2.} In this step we fix an arbitrary $(a,b,c) \in \parP$ and $k \in \mathbb{N}$ with $k \leq J_a^+$. Note that in particular $J_a^+ \geq 1$. We seek to show that for any fixed $\varepsilon > 0$, we have
\begin{equation}\label{Eq.ThmSlopesRed}
\begin{split}
&\lim_{N \rightarrow \infty} \mathbb{P}\left( t_N^{-1} \cdot (\mathcal{A}^{a,b,c}_k(t_N) - t_N^2) + 2/a_k^+ > \varepsilon \right) = 0, \mbox{ and }\\
& \lim_{N \rightarrow \infty} \mathbb{P}\left( t_N^{-1} \cdot (\mathcal{A}^{a,b,c}_k(t_N) - t_N^2) + 2/a_k^+ < -\varepsilon \right) = 0.
\end{split}
\end{equation} 

Fix $\delta > 0$ sufficiently small so that 
$$2(e^{\delta k} - 1)/a_k^+ < \varepsilon/2, \mbox{ and }2/a_k^+ - 2/(2^{-k} \delta +a_k^+) < \varepsilon/2.$$ 
We define the auxiliary sequences
$$\hat{a}^+_i = e^{-\delta i} \cdot a_i^+, \hspace{2mm} \check{a}^+_i = 2^{-i}\delta + a_i^+ \mbox{ for } i \geq 1,$$
and observe that $J_{\hat{a}}^+ = J_a^+$, $J_{\check{a}}^+ = \infty$, and
$$\check{a}^+_i > \check{a}^+_{i+1} \mbox{ for } i \geq 1, \mbox{ and } \hat{a}^+_i > \hat{a}^+_{i+1} \mbox{ for } i = 1, \dots, J_{\hat{a}}^+.$$

From our work in Step 1, we conclude 
\begin{equation}\label{Eq.ThmSlopesAux}
\begin{split}
&\lim_{N \rightarrow \infty} \mathbb{P}\left( t_N^{-1} \cdot (\mathcal{A}^{\check{a},b,c}_k(t_N) - t_N^2) + 2/\check{a}_k^+ > \varepsilon/2 \right) = 0, \mbox{ and }\\
& \lim_{N \rightarrow \infty} \mathbb{P}\left( t_N^{-1} \cdot (\mathcal{A}^{\hat{a},b,c}_k(t_N) - t_N^2) + 2/\hat{a}_k^+ < -\varepsilon/2 \right) = 0.
\end{split}
\end{equation} 

Using that $\check{a}_i > a_i^+$ for $i \geq 1$, and Theorem \ref{Thm.MonCoupling} with $A = B = 0$, we conclude
\begin{equation*}
\begin{split}
&\limsup_{N \rightarrow \infty} \mathbb{P}\left( t_N^{-1} \cdot (\mathcal{A}^{a,b,c}_k(t_N) - t_N^2) + 2/\check{a}_k^+ > \varepsilon/2 \right) \\
& \leq \limsup_{N \rightarrow \infty} \mathbb{P}\left( t_N^{-1} \cdot (\mathcal{A}^{\check{a},b,c}_k(t_N) - t_N^2) + 2/\check{a}_k^+ > \varepsilon/2 \right) = 0,
\end{split}
\end{equation*}
where in the last equality we used the first line in (\ref{Eq.ThmSlopesAux}). The last equation implies the first line in (\ref{Eq.ThmSlopesRed}), once we note that $2/\check{a}_k^+ > 2/a_k^+ - \varepsilon/2$ by construction.

Similarly, using that $\hat{a}_i \leq a_i^+$ for $i \geq 1$, and Theorem \ref{Thm.MonCoupling} with $A = B = 0$, we conclude
\begin{equation*}
\begin{split}
&\limsup_{N \rightarrow \infty} \mathbb{P}\left( t_N^{-1} \cdot (\mathcal{A}^{a,b,c}_k(t_N) - t_N^2) + 2/\hat{a}_k^+ < - \varepsilon/2 \right) \\
& \leq \limsup_{N \rightarrow \infty} \mathbb{P}\left( t_N^{-1} \cdot (\mathcal{A}^{\hat{a},b,c}_k(t_N) - t_N^2) + 2/\hat{a}_k^+ <- \varepsilon/2 \right) = 0,
\end{split}
\end{equation*}
where in the last equality we used the second line in (\ref{Eq.ThmSlopesAux}). The last equation implies the second line in (\ref{Eq.ThmSlopesRed}), once we note that $2/\hat{a}_k^+ < 2/a_k^+ + \varepsilon/2$ by construction.

%
%
\subsection{Proof of Theorem \ref{Thm.Slopes}(c,d)}\label{Section5.2} Observe that part (d) follows from part (c) and Proposition \ref{S13P1}(b). Consequently, it suffices to prove part (c). We split the proof into two steps.\\

{\bf \raggedleft Step 1.} In this step we show the following tightness-from-above statement 
\begin{equation}\label{Eq.ThmSlopeCDRed1}
\lim_{h \rightarrow \infty }\limsup_{N \rightarrow \infty} \mathbb{P}\left( \mathcal{A}^{a,b,c}_k(t_N) \geq h  \right) = 0.
\end{equation}

Set $\check{a}_i^+ = 2^{-i} + a_i^+$ for $i = 1, \dots, J_a^+$ and $\check{a}_i^+ = 0$ for $i \geq J_a^+ + 1$. We observe that $J^+_{\check{a}} = J^+_a$, $\check{a}_i^+ > \check{a}_{i+1}^+$ for $i = 1, \dots, J_{\check{a}}^+$, and $\check{a}_i^+ \geq a_i^+$ for $i \geq 1$. From Theorem \ref{Thm.MonCoupling}, we see that to prove (\ref{Eq.ThmSlopeCDRed1}), it suffices to show 
\begin{equation}\label{Eq.ThmSlopeCDRed2}
\lim_{h \rightarrow \infty }\limsup_{N \rightarrow \infty} \mathbb{P}\left(E_{h,N} \right) = 0, \mbox{ where } E_{h,N} =  \{\mathcal{A}^{\check{a},b,c}_k(t_N) \geq h \}.
\end{equation}

Fix $\varepsilon > 0$. Let $F_{h,N} = \{\mathcal{A}^{\check{a},b,c}_i(t_N) \geq h \mbox{ for } i =1, \dots, J_{\check{a}}^+\}$, and observe that from Theorem \ref{Thm.Slopes}(a), which we proved in Section \ref{Section5.1} above, we have for each $h \in \mathbb{R}$
\begin{equation}\label{Eq.ThmSlopeCD1}
\lim_{N \rightarrow \infty}\mathbb{P}(F_{h,N}) = 1.
\end{equation}

Recall $\tilde{M}^{t_N}$ from (\ref{Eq.MeasT}):
\begin{equation*}
 \tilde{M}^{t_N}(A) = \sum_{i \geq 1} {\bf 1}\left\{ \mathcal{A}^{\check{a},b,c}_i(t_N)  \in A \right\}.
\end{equation*}
From Lemma \ref{Lem.FirstMomentFlat}, we can find $H > 0$, such that for $h \geq H$, we have 
\begin{equation*}
\limsup_{N \rightarrow \infty} \mathbb{E}\left[\tilde{M}^{t_N}[h, \infty)  \right] \leq J_{\check{a}}^+ + \varepsilon.
\end{equation*}
Using that $k \geq J_{\check{a}}^+ + 1$ by assumption, we conclude
$$\tilde{M}^{t_N}[h, \infty) \geq (J_{\check{a}}^+ + 1) \cdot {\bf 1}_{E_{h,N}} + J_{\check{a}}^+ \cdot {\bf 1}_{F_{h,N} \cap E_{h,N}^c} \geq (J_{\check{a}}^+ + 1)  \cdot {\bf 1}_{E_{h,N}} + J_{\check{a}}^+ \cdot {\bf 1}_{E_{h,N}^c} - J_{\check{a}}^+ \cdot {\bf 1}_{F_{h,N}^c}.$$
The last two inequalities and (\ref{Eq.ThmSlopeCD1}) show for $h \geq H$
$$\limsup_{N \rightarrow \infty} \mathbb{P}(E_{h,N}) \leq \varepsilon,$$
which establishes (\ref{Eq.ThmSlopeCDRed2}).\\

{\bf \raggedleft Step 2.} In this step we show the following tightness-from-below statement 
\begin{equation}\label{Eq.ThmSlopeCDRed3}
\lim_{h \rightarrow \infty }\limsup_{N \rightarrow \infty} \mathbb{P}\left( \mathcal{A}^{a,b,c}_k(t_N) \leq - h  \right) = 0,
\end{equation}
which together with (\ref{Eq.ThmSlopeCDRed1}) completes the proof of the theorem.

From Theorem \ref{Thm.MonCoupling} with $A = B= 0$, we have
\begin{equation*}
\mathbb{P}\left( \mathcal{A}^{a,b,c}_k(t_N) \leq - h  \right) \leq \mathbb{P}\left( \mathcal{A}^{0,0,0}_k(t_N) \leq - h  \right)  = \mathbb{P}\left( \mathcal{A}^{0,0,0}_k(0) \leq - h  \right),
\end{equation*}
where $\mathcal{A}^{0,0,0}$ is the usual Airy line ensemble, cf. Remark \ref{Rem.AWLE2}, and in the last equality we used its stationarity. The last inequality immediately implies (\ref{Eq.ThmSlopeCDRed3}).

%
%
\section{Extremality}\label{Section6} The goal of this section is to prove Theorem \ref{Thm.Extreme}. In Section \ref{Section6.1} we formally introduce the Brownian Gibbs property, and recall some known results from the literature. In Section \ref{Section6.2} we perform a sequence of steps that reduce the proof of Theorem \ref{Thm.Extreme} to establishing the existence of {\em approximate} monotone couplings between certain ensembles related to $\mathcal{L}^{a,b,c}$ and ensembles $\mathcal{L}^{\hat{a},\hat{b},0}$ with finitely many non-zero parameters. In the same section we give a general overview of how this approximate monotone coupling is established, with the full argument presented in Section \ref{Section6.3}.

%
%
\subsection{Preliminary results}\label{Section6.1} The goal of this section is to define the Brownian Gibbs property, originally introduced in \cite{CorHamA}. As fairly detailed expositions of this property have already been given, we will be brief. We refer to \cite[Section 2]{DEA21} and \cite[Section 2]{DimMat} for additional details.

If $W_t$ is a standard one-dimensional Brownian motion, then the process
$$\tilde{B}(t) =  W_t - t W_1, \hspace{5mm} 0 \leq t \leq 1,$$
is called a {\em standard Brownian bridge}. Given $a,b,x,y \in \mathbb{R}$ with $a < b$, we define
\begin{equation}\label{BBDef}
B(t) = (b-a)^{1/2} \cdot \tilde{B} \left( \frac{t - a}{b-a} \right) + \left(\frac{b-t}{b-a} \right) \cdot x + \left( \frac{t- a}{b-a}\right) \cdot y, 
\end{equation}
and refer to the law of this random continuous function in $C([a,b])$ as a {\em Brownian bridge from $B(a) = x$ to $B(b) = y$ (with diffusion parameter $1$)}. Given $k \in \mathbb{N}$ and $\vec{x}, \vec{y} \in \mathbb{R}^k$, we let $\mathbb{P}^{a,b, \vec{x},\vec{y}}_{\mathrm{free}}$ denote the law of $k$ independent Brownian bridges $\{B_i: [a,b] \mapsto \mathbb{R} \}_{i = 1}^k$ from $B_i(a) = x_i$ to $B_i(b) = y_i$.

The following definition introduces the notion of an $(f,g)$-avoiding Brownian line ensemble.
\begin{definition}\label{DefAvoidingLaw}
Let $k \in \mathbb{N}$ and $\weyl_k$ denote the open Weyl chamber in $\mathbb{R}^{k}$, i.e.
$$\weyl_k = \{ \vec{x} = (x_1, \dots, x_k) \in \mathbb{R}^k\colon x_1 > x_2 > \cdots > x_k \}.$$
Let $\vec{x}, \vec{y} \in \weyl_k$, $a,b \in \mathbb{R}$ with $a < b$, and $f: [a,b] \rightarrow (-\infty, \infty]$ and $g: [a,b] \rightarrow [-\infty, \infty)$ be two continuous functions. We also assume that $f(t) > g(t)$ for all $t \in[a,b]$, $f(a) > x_1$, $f(b) > y_1$ and $g(a) < x_k$, $g(b) < y_k$.

We define the {\em $(f,g)$-avoiding Brownian line ensemble on the interval $[a,b]$ with entrance data $\vec{x}$ and exit data $\vec{y}$} to be the $\{ 1, \dots, k\}$-indexed line ensemble $\mathcal{Q}$ on $\Lambda = [a,b]$ and with the law of $\mathcal{Q}$ equal to the law of $k$ independent Brownian bridges $\{B_i: [a,b] \mapsto \mathbb{R} \}_{i = 1}^k$ from $B_i(a) = x_i$ to $B_i(b) = y_i$, conditioned on the event 
$$E  = \left\{ f(t) > B_1(t) > B_2(t) > \cdots > B_k(t) > g(t) \mbox{ for all $t \in[a,b]$} \right\}.$$ 
We denote the probability distribution of $\mathcal{Q}$ by $\mathbb{P}_{\mathrm{avoid}}^{a,b, \vec{x}, \vec{y}, f, g}$ and write $\mathbb{E}_{\mathrm{avoid}}^{a,b, \vec{x}, \vec{y}, f, g}$ for the expectation with respect to this measure. 
\end{definition}

With the above notation in place, we can formulate the Brownian Gibbs property for $\mathbb{N}$-indexed line ensembles on $\mathbb{R}$.
\begin{definition}\label{Def.BGP}
An $\mathbb{N}$-indexed line ensemble $\mathcal{L} = \{\mathcal{L}_i\}_{i \geq 1}$ on $\mathbb{R}$ is said to have the {\em Brownian Gibbs property}, if it is non-intersecting, meaning that almost surely
$$\mathcal{L}_i(t) > \mathcal{L}_{i+1}(t) \mbox{ for all } i \geq 1, \mbox{ and } t \in \mathbb{R},$$
and the following holds for all $ a < b$ and $1 \leq k_1 \leq k_2$. If we set $K = \{k_1, k_1 + 1, \dots, k_2\}$, then for any bounded Borel-measurable function $F: C(K \times [a,b]) \mapsto \mathbb{R}$, we have $\mathbb{P}$-almost surely
\begin{equation}\label{BGPTower}
\mathbb{E} \left[ F\left(\mathcal{L}|_{K \times [a,b]} \right)  {\big \vert} \mathcal{F}_{\mathrm{ext}} (K \times (a,b))  \right] =\mathbb{E}_{\mathrm{avoid}}^{a,b, \vec{x}, \vec{y}, f, g} \bigl[ F(\tilde{\mathcal{Q}}) \bigr].
\end{equation}
On the left side of (\ref{BGPTower}), we have that
$$\mathcal{F}_{\mathrm{ext}} (K \times (a,b)) = \sigma \left \{ \mathcal{L}_i(s): (i,s) \in (\mathbb{N} \times \mathbb{R}) \setminus (K \times (a,b)) \right\},$$
is the $\sigma$-algebra generated by the variables in the brackets above and $ \mathcal{L}|_{K \times [a,b]}$ is the restriction of $\mathcal{L}$ to the set $K \times [a,b]$. On the right side of (\ref{BGPTower}), we have $\vec{x} = (\mathcal{L}_{k_1}(a), \dots, \mathcal{L}_{k_2}(a))$, $\vec{y} = (\mathcal{L}_{k_1}(b), \dots, \mathcal{L}_{k_2}(b))$, $f = \mathcal{L}_{k_1 - 1}[a,b]$ (the restriction of $\mathcal{L}$ to the set $\{k_1 - 1 \} \times [a,b]$) with the convention that $f = \infty$ if $k_1 = 1$, and $g = \mathcal{L}_{k_2 +1}[a,b]$. 
In addition, $\mathcal{Q} = \{\mathcal{Q}_i\}_{i = 1}^{k_2 - k_1 + 1}$ has law $\mathbb{P}_{\mathrm{avoid}}^{a,b, \vec{x}, \vec{y}, f, g}$, and $\tilde{\mathcal{Q}} = \{\tilde{\mathcal{Q}}_{i}\}_{i = k_1}^{k_2}$ satisfies $\tilde{\mathcal{Q}}_i = \mathcal{Q}_{i - k_1 + 1}$.
\end{definition}

We end this section with two results we require for our arguments below. The first provides a monotone coupling of the measures $\mathbb{P}_{\mathrm{avoid}}^{a,b, \vec{x}, \vec{y}, f, g}$ from Definition \ref{DefAvoidingLaw} in their boundary data. This statement can be deduced by combining \cite[Lemma 2.6 and Lemma 2.7]{CorHamA}, although the formulation below appears in \cite[Lemma A.6]{DimMat}. 
\begin{lemma}\label{Lem.MonCoup} Assume the same notation as in Definition \ref{DefAvoidingLaw}. Fix $k \in \mathbb{N}$, $a < b$ and two continuous functions $g^{\mathsf{t}}, g^{\mathsf{b}}: [a,b] \mapsto [-\infty, \infty)$ such that $g^{\mathsf{t}}(t) \geq g^{\mathsf{b}}(t)$ for all $t \in [a,b]$. We also fix $\vec{x}, \vec{y}, \vec{x}', \vec{y}' \in \weyl_k$ such that $g^{\mathsf{b}}(a) < x_k$, $g^{\mathsf{b}}(b) < y_k$, $g^{\mathsf{t}}(a) < x_k'$, $g^{\mathsf{t}}(b) < y_k'$, and $x_i \leq x_i'$, $y_i \leq y_i'$ for $i = 1,\dots, k$. Then, there exists a probability space $(\Omega, \mathcal{F}, \mathbb{P})$, which supports two $\{1,\dots, k\}$-indexed line ensembles $\mathcal{L}^{\mathsf{t}}$ and $\mathcal{L}^{\mathsf{b}}$ on $[a,b]$, such that the law of $\mathcal{L}^{\mathsf{t}}$ {\big (}resp. $\mathcal{L}^{\mathsf{b}}${\big )} under $\mathbb{P}$ is $\mathbb{P}_{\mathrm{avoid}}^{a,b, \vec{x}', \vec{y}', \infty, g^{\mathsf{t}}}$ {\big (}resp. $\mathbb{P}_{\mathrm{avoid}}^{a,b, \vec{x}, \vec{y}, \infty, g^{\mathsf{b}}}${\big )}, and such that $\mathbb{P}$-almost surely we have $\mathcal{L}_i^{\mathsf{t}}(t) \geq \mathcal{L}^{\mathsf{b}}_i(t)$ for all $i = 1,\dots, k$ and $t \in [a,b]$.
\end{lemma}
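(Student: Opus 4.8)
\emph{Proof plan.} The plan is to obtain the coupling by composing two elementary monotone couplings, in each of which only one feature of the boundary data is increased, and to invoke the monotonicity of $(f,g)$-avoiding Brownian line ensembles in their endpoints and bounding curves established in \cite[Lemmas 2.6 and 2.7]{CorHamA} (see also \cite[Lemma A.6]{DimMat}). Throughout, the top curve is absent ($f \equiv \infty$), so the only admissibility constraints from Definition \ref{DefAvoidingLaw} concern the lower curve; this is what makes the staging possible.

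First I would introduce the intermediate measure $\nu := \mathbb{P}_{\mathrm{avoid}}^{a,b,\vec{x}',\vec{y}',\infty,g^{\mathsf{b}}}$ and note that it is well-defined: since $g^{\mathsf{b}}(a) < x_k \le x_k'$ and $g^{\mathsf{b}}(b) < y_k \le y_k'$, the data $(\vec{x}',\vec{y}',\infty,g^{\mathsf{b}})$ satisfy the hypotheses of Definition \ref{DefAvoidingLaw}. \textbf{Step 1 (raising the endpoints).} With the common lower barrier $g^{\mathsf{b}}$, common upper curve $\infty$, and ordered boundary data $x_i \le x_i'$, $y_i \le y_i'$, I apply the monotone coupling of \cite[Lemma 2.6]{CorHamA} to produce a probability space carrying $\mathcal{M}^{\mathsf{b}}$ with law $\mathbb{P}_{\mathrm{avoid}}^{a,b,\vec{x},\vec{y},\infty,g^{\mathsf{b}}}$ and $\mathcal{M}^{\mathsf{m}}$ with law $\nu$, such that $\mathcal{M}^{\mathsf{b}}_i \le \mathcal{M}^{\mathsf{m}}_i$ pointwise for $i = 1,\dots,k$. \textbf{Step 2 (raising the lower barrier).} With the boundary data now fixed at $(\vec{x}',\vec{y}')$ and upper curve $\infty$, I apply the monotonicity of $\mathbb{P}_{\mathrm{avoid}}$ in the lower bounding curve (again a consequence of \cite[Lemmas 2.6 and 2.7]{CorHamA}) to the ordered pair $g^{\mathsf{b}} \le g^{\mathsf{t}}$; this is legitimate because $g^{\mathsf{t}}(a) < x_k'$ and $g^{\mathsf{t}}(b) < y_k'$ are assumed. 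It produces a probability space carrying $\mathcal{N}^{\mathsf{m}}$ with law $\nu$ and $\mathcal{N}^{\mathsf{t}}$ with law $\mathbb{P}_{\mathrm{avoid}}^{a,b,\vec{x}',\vec{y}',\infty,g^{\mathsf{t}}}$, with $\mathcal{N}^{\mathsf{m}}_i \le \mathcal{N}^{\mathsf{t}}_i$ pointwise.

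Next I would glue these two couplings along their common marginal $\nu$. Since $C(\{1,\dots,k\}\times[a,b])$ is Polish, the laws of $(\mathcal{M}^{\mathsf{b}},\mathcal{M}^{\mathsf{m}})$ and of $(\mathcal{N}^{\mathsf{m}},\mathcal{N}^{\mathsf{t}})$ may be disintegrated over the value of their $\nu$-distributed middle coordinate, and the standard gluing lemma for couplings then furnishes a single probability space $(\Omega,\mathcal{F},\mathbb{P})$ supporting $\mathcal{L}^{\mathsf{b}}$, $\mathcal{L}^{\mathsf{m}}$, $\mathcal{L}^{\mathsf{t}}$ with respective laws $\mathbb{P}_{\mathrm{avoid}}^{a,b,\vec{x},\vec{y},\infty,g^{\mathsf{b}}}$, $\nu$, $\mathbb{P}_{\mathrm{avoid}}^{a,b,\vec{x}',\vec{y}',\infty,g^{\mathsf{t}}}$ and with $\mathcal{L}^{\mathsf{b}}_i(t) \le \mathcal{L}^{\mathsf{m}}_i(t) \le \mathcal{L}^{\mathsf{t}}_i(t)$ for all $i = 1,\dots,k$ and $t \in [a,b]$. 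Discarding $\mathcal{L}^{\mathsf{m}}$ leaves exactly the coupling asserted in the lemma, with $\mathcal{L}^{\mathsf{t}} := \mathcal{L}^{\mathsf{t}}$ and $\mathcal{L}^{\mathsf{b}} := \mathcal{L}^{\mathsf{b}}$.

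The substantive input is the monotonicity of avoiding Brownian bridge ensembles quoted from \cite{CorHamA}, whose engine is the fact that a single Brownian bridge conditioned to stay above a continuous barrier is stochastically increasing both in its two endpoints and in the barrier; chaining this through a monotone Gibbs-type resampling of one curve at a time (driven by common randomness, then passed to the stationary law) yields the multi-curve statement. If a self-contained argument were required, establishing that single-curve comparison by a pathwise reflection coupling together with the convergence of the Gibbs sampler would be the main work. Granting the cited results, the only point requiring care is the bookkeeping above — verifying that the intermediate measure $\nu$ is admissible (which uses precisely the hypotheses $g^{\mathsf{b}}(a)<x_k$, $g^{\mathsf{b}}(b)<y_k$, $g^{\mathsf{t}}(a)<x_k'$, $g^{\mathsf{t}}(b)<y_k'$) and performing the routine measurable gluing — so I expect no genuine obstacle.
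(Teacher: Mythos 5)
Your proposal is correct and follows exactly the route the paper indicates: the paper itself gives no proof but remarks that the statement ``can be deduced by combining \cite[Lemma 2.6 and Lemma 2.7]{CorHamA}'' (deferring to \cite[Lemma A.6]{DimMat} for the formulation), which is precisely your two-stage strategy of first raising the endpoints, then raising the lower barrier, glued along the common intermediate law $\nu$. Your explicit admissibility check for $\nu$ and the measurable-gluing step are the right details to supply.
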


The second result we require shows that the Brownian Gibbs property is preserved under simultaneous vertical/horizontal shifts and Brownian scaling.
\begin{lemma}\label{Lem.Affine} Let $\mathcal{L}$ be an $\mathbb{N}$-indexed line ensemble on $\mathbb{R}$ that satisfies the Brownian Gibbs property. Then, for any $\lambda > 0$ and $h, v \in \mathbb{R}$ the line ensemble $\tilde{\mathcal{L}} = \{\tilde{\mathcal{L}}_i\}_{i \geq 1}$, defined by
$$\tilde{\mathcal{L}}_i(t) = \lambda^{-1}\mathcal{L}_i(\lambda^2 t + h) + v, \mbox{ for } i \geq 1, t \in \mathbb{R},$$
also satisfies the Brownian Gibbs property.
\end{lemma}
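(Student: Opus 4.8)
\textbf{Proof plan for Lemma \ref{Lem.Affine}.} The plan is to verify the two defining requirements of the Brownian Gibbs property (Definition \ref{Def.BGP}) for $\tilde{\mathcal{L}}$ directly from the corresponding properties of $\mathcal{L}$. The non-intersection property is immediate: since $\lambda > 0$, the map $x \mapsto \lambda^{-1}x + v$ is strictly increasing, so $\mathcal{L}_i(\lambda^2 t + h) > \mathcal{L}_{i+1}(\lambda^2 t + h)$ for all $t$ implies $\tilde{\mathcal{L}}_i(t) > \tilde{\mathcal{L}}_{i+1}(t)$ for all $t$, almost surely.

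The substance is the resampling identity \eqref{BGPTower}. First I would record the elementary fact that the affine map $\psi(t,x) = (\lambda^2 t + h, \lambda^{-1}x + v)$ and its inverse transform the avoiding-Brownian-bridge laws in the expected way: a collection of $k$ independent Brownian bridges on $[\lambda^2 a + h, \lambda^2 b + h]$ with endpoints $\vec{x},\vec{y}$, conditioned to avoid an upper boundary $f$ and lower boundary $g$, is carried under $(t,x)\mapsto(\lambda^{-2}(t-h), \lambda^{-1}x+v)$ to $k$ independent Brownian bridges on $[a,b]$ with endpoints $\lambda^{-1}\vec{x}+v, \lambda^{-1}\vec{y}+v$ conditioned to avoid $\lambda^{-1}f(\lambda^2\cdot+h)+v$ and $\lambda^{-1}g(\lambda^2\cdot+h)+v$. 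This is exactly the Brownian-scaling invariance of Brownian bridge together with the fact that affine shifts of entrance/exit data and of the avoidance boundaries intertwine with affine shifts of the bridges; the conditioning event is mapped bijectively onto the conditioning event, and the normalizing constants match. Concretely, with $\mathcal{Q}$ distributed as $\mathbb{P}_{\mathrm{avoid}}^{\lambda^2 a + h, \lambda^2 b + h, \vec{x}, \vec{y}, f, g}$, the ensemble $\tilde{\mathcal{Q}}_i(t) = \lambda^{-1}\mathcal{Q}_i(\lambda^2 t + h) + v$ is distributed as $\mathbb{P}_{\mathrm{avoid}}^{a,b,\lambda^{-1}\vec{x}+v, \lambda^{-1}\vec{y}+v, \lambda^{-1}f(\lambda^2\cdot + h)+v, \lambda^{-1}g(\lambda^2\cdot + h)+v}$.

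Next I would fix $a < b$ and $1 \le k_1 \le k_2$, set $K = \{k_1,\dots,k_2\}$, and let $a' = \lambda^2 a + h$, $b' = \lambda^2 b + h$. Observe that the $\sigma$-algebra $\tilde{\mathcal{F}}_{\mathrm{ext}}(K\times(a,b)) = \sigma\{\tilde{\mathcal{L}}_i(s): (i,s)\notin K\times(a,b)\}$ coincides with $\mathcal{F}_{\mathrm{ext}}(K\times(a',b'))$ for $\mathcal{L}$, since $\tilde{\mathcal{L}}_i(s)$ is a deterministic invertible function of $\mathcal{L}_i(\lambda^2 s + h)$ and $s\in(a,b)$ iff $\lambda^2 s + h \in (a',b')$. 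Given a bounded Borel $F: C(K\times[a,b])\to\mathbb{R}$, define $G: C(K\times[a',b'])\to\mathbb{R}$ by $G(g) = F(\Theta g)$ where $\Theta$ is the (continuous, bijective) map sending $g$ to $\big((i,t)\mapsto \lambda^{-1}g(i,\lambda^2 t + h) + v\big)$; then $F(\tilde{\mathcal{L}}|_{K\times[a,b]}) = G(\mathcal{L}|_{K\times[a',b']})$. Applying the Brownian Gibbs property of $\mathcal{L}$ on $[a',b']$ gives, almost surely,
\begin{equation*}
\mathbb{E}\big[F(\tilde{\mathcal{L}}|_{K\times[a,b]}) \,\big|\, \tilde{\mathcal{F}}_{\mathrm{ext}}(K\times(a,b))\big] = \mathbb{E}\big[G(\mathcal{L}|_{K\times[a',b']}) \,\big|\, \mathcal{F}_{\mathrm{ext}}(K\times(a',b'))\big] = \mathbb{E}_{\mathrm{avoid}}^{a',b',\vec{x}',\vec{y}',f',g'}[G(\tilde{\mathcal{Q}}')],
\end{equation*}
where $\vec{x}' = (\mathcal{L}_{k_1}(a'),\dots,\mathcal{L}_{k_2}(a'))$, $\vec{y}' = (\mathcal{L}_{k_1}(b'),\dots,\mathcal{L}_{k_2}(b'))$, $f' = \mathcal{L}_{k_1-1}[a',b']$ (or $\infty$), $g' = \mathcal{L}_{k_2+1}[a',b']$, and $\tilde{\mathcal{Q}}'$ has the reindexed avoiding law. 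By the intertwining recorded in the previous paragraph, $\mathbb{E}_{\mathrm{avoid}}^{a',b',\vec{x}',\vec{y}',f',g'}[G(\tilde{\mathcal{Q}}')] = \mathbb{E}_{\mathrm{avoid}}^{a,b,\vec{x},\vec{y},f,g}[F(\tilde{\mathcal{Q}})]$ with $\vec{x} = (\tilde{\mathcal{L}}_{k_1}(a),\dots,\tilde{\mathcal{L}}_{k_2}(a))$, $\vec{y} = (\tilde{\mathcal{L}}_{k_1}(b),\dots,\tilde{\mathcal{L}}_{k_2}(b))$, $f = \tilde{\mathcal{L}}_{k_1-1}[a,b]$ (or $\infty$, noting $\infty$ is a fixed point of the boundary transformation), $g = \tilde{\mathcal{L}}_{k_2+1}[a,b]$, which is precisely the right side of \eqref{BGPTower} for $\tilde{\mathcal{L}}$. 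This completes the verification.

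I do not anticipate a genuine obstacle here; the lemma is a bookkeeping exercise in change of variables. The one point requiring mild care is the equality of $\sigma$-algebras and the measurability/continuity of the transformation maps $\Theta$ and $\psi$ on the relevant function spaces with the topology of uniform convergence on compacts — these are routine because affine maps are homeomorphisms and the time rescaling is bi-continuous, but one should state them explicitly. The other point to handle cleanly is the $k_1 = 1$ case, where $f = \infty$; since the affine boundary transformation fixes the constant function $\infty$, no special argument is needed, but it is worth a sentence. (Strictly, one could also cite \cite[Lemma 2.1]{DS25} with $\vec A = 0$, which is invoked in the introduction, but giving the short direct proof is cleaner and self-contained.)
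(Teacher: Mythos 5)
Your proof is correct. However, the paper disposes of this lemma in one line: it simply cites \cite[Lemma 2.1]{DS25} with $N=\infty$ and $A_i = 0$, treating the statement as a special case of a general invariance already recorded in the literature (that external lemma handles ensembles with per-curve drifts $A_i$, of which the drift-free case $A_i=0$ is what's needed here). You instead give a self-contained change-of-variables verification against Definitions \ref{DefAvoidingLaw} and \ref{Def.BGP}. Your version makes explicit exactly what the citation sweeps under the rug: that the affine map $(t,x)\mapsto(\lambda^2 t + h, \lambda^{-1}x+v)$ is a homeomorphism intertwining the $\sigma$-algebras $\tilde{\mathcal{F}}_{\mathrm{ext}}(K\times(a,b))$ and $\mathcal{F}_{\mathrm{ext}}(K\times(a',b'))$, and that the avoiding-bridge laws transform covariantly (the Brownian-bridge scaling computation with $b'-a' = \lambda^2(b-a)$, checked against the explicit formula \eqref{BBDef}, plus the bijection of the conditioning events). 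The citation buys brevity and delegates the bookkeeping; your direct argument buys self-containedness and locates the only mildly delicate points (measurability of the transformation map on $C(K\times[\cdot,\cdot])$, and the convention $f=\infty$ when $k_1=1$). Both are valid; you yourself flag the citation alternative at the end.

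One small tightening worth adding if you write this out: after the Brownian-bridge substitution $s = \lambda^2 t + h$, the linear boundary-interpolation terms require the identity $\frac{b-t}{b-a}+\frac{t-a}{b-a}=1$ to absorb the additive constant $v$ symmetrically into both endpoints, so that one genuinely lands in the class \eqref{BBDef} on $[a,b]$ rather than in some affinely-tilted variant. This is a one-line observation but it is the place where the computation could silently go wrong for a shift that is not simultaneous in both endpoints.
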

\begin{proof} This follows from \cite[Lemma 2.1]{DS25} with $N = \infty$, $A_i = 0$ for $i \geq 1$.
\end{proof}

%
%
\subsection{Proof of Theorem \ref{Thm.Extreme}: Part I}\label{Section6.2} Denote the law of $\mathcal{L}^{a,b,c}$ on $C(\mathbb{N} \times \mathbb{R})$ by $\mu$. Suppose that $\mathcal{L}^1, \mathcal{L}^2$ are $\mathbb{N}$-indexed line ensembles on $\mathbb{R}$, which have the Brownian Gibbs property, and whose laws $\mu_1, \mu_2$ satisfy 
\begin{equation}\label{Eq.Convex}
\mu = \alpha \mu_1 + (1-\alpha) \mu_2,
\end{equation} 
for some $\alpha \in (0,1)$. To prove the theorem, we need to show
\begin{equation}\label{Eq.ThmExtreme}
\mu = \mu_1 \mbox{ and } \mu = \mu_2. 
\end{equation}

In the following steps, we reduce the proof of (\ref{Eq.ThmExtreme}) to a sequence of intermediate statements that are easier to establish.\\

{\bf \raggedleft Step 1.} Fix $L, R \in \mathbb{Z}_{\geq 0}$, such that $L \leq J_b^+$ and $R \leq J_a^+$. Let $\mathcal{L}^{L,R} = \{\mathcal{L}^{L,R}_{i}\}_{i \geq 1}$ be the line ensemble whose law agrees with $\mathcal{L}^{\tilde{a}, \tilde{b}, 0}$, where $(\tilde{a}, \tilde{b}, 0) \in \parP$ are given by
$$\tilde{a}_i^+ = a_i^+ \mbox{, for $i = 1, \dots, R$, }\tilde{b}_i^+ = b_i^+ \mbox{, for $i = 1, \dots, L$, and } \tilde{a}_{i+R}^+ = \tilde{b}_{i+L}^+ = 0 \mbox{ for }i \geq 1.$$ 

We claim that for any $j \in \{1,2\}$, $m \in \mathbb{N}$, $t_1, \dots, t_m, u_1, \dots, u_m \in \mathbb{R}$ and $n_1, \dots, n_m \in \mathbb{N}$, we have
\begin{equation}\label{Eq.ExtRed1}
\mathbb{P}(\mathcal{L}^j_{n_i}(t_i) \leq u_i \mbox{ for } i =1, \dots, m ) \leq \mathbb{P}(\mathcal{L}^{L,R}_{n_i}(t_i) \leq u_i \mbox{ for } i =1, \dots, m ).
\end{equation}

Let us assume (\ref{Eq.ExtRed1}), and deduce (\ref{Eq.ThmExtreme}). Define $L_n = \min(J_b^+, n)$ and $R_n = \min(J_a^+, n)$, and note from (\ref{Eq.ExtRed1}) that
\begin{equation*}
\mathbb{P}(\mathcal{L}^j_{n_i}(t_i) \leq u_i \mbox{ for } i =1, \dots, m ) \leq \mathbb{P}(\mathcal{L}^{L_n,R_n}_{n_i}(t_i) \leq u_i \mbox{ for } i =1, \dots, m ).
\end{equation*} 
From Proposition \ref{S13P2}, we know $\mathcal{L}^{L_n,R_n} \Rightarrow \mathcal{L}^{a,b,c}$, and by the continuous mapping theorem $\mathcal{L}^{L_n,R_n} \overset{f.d.}{\rightarrow}\mathcal{L}^{a,b,c}$. By the Brownian Gibbs property satisfied by $\mathcal{L}^{a,b,c}$, we know that the distributions of $\mathcal{L}^{a,b,c}_{n_i}(t_i)$ have no atoms, and so we conclude by taking the $n \rightarrow \infty$ limit of the last equation that
\begin{equation*}
\mathbb{P}(\mathcal{L}^j_{n_i}(t_i) \leq u_i \mbox{ for } i =1, \dots, m ) \leq \mathbb{P}(\mathcal{L}^{a,b,c}_{n_i}(t_i) \leq u_i \mbox{ for } i =1, \dots, m ).
\end{equation*} 

Since $\mu = \alpha \mu_1 + (1-\alpha) \mu_2$ and $\alpha \in (0,1)$, we conclude for $j \in \{1,2\}$
\begin{equation*}
\begin{split}
 \mathbb{P}(\mathcal{L}^j_{n_i}(t_i) \leq u_i \mbox{ for } i =1, \dots, m ) = \mathbb{P}(\mathcal{L}^{a,b,c}_{n_i}(t_i) \leq u_i \mbox{ for } i =1, \dots, m ) . 
\end{split}
\end{equation*}
As finite-dimensional sets uniquely determine the law of a line ensemble, see \cite[Lemma 3.1]{DimMat}, we conclude (\ref{Eq.ThmExtreme}).\\

{\bf \raggedleft Step 2.} Fix $\delta > 0$, and define
\begin{equation}\label{Eq.DeltaPerturbation}
\begin{split}
&\hat{a}_i^+ = e^{-\delta i} \cdot a_i^+ \mbox{, for } i = 1, \dots, R, \mbox{ and } \hat{a}_i^+ = 0 \mbox{, for } i \geq R+1,\\
&\hat{b}_i^+ = e^{-\delta i} \cdot b_i^+ \mbox{, for } i = 1, \dots, L, \mbox{ and } \hat{b}_i^+ = 0 \mbox{, for } i \geq L+1.
\end{split}
\end{equation}
We also let $\hat{\mathcal{L}}^{\delta}$ denote the ensemble with the same law as $\mathcal{L}^{\hat{a}, \hat{b}, 0}$, with $(\hat{a}, \hat{b}, 0)$ as above. 

We claim that for any $\delta > 0$, we have 
\begin{equation}\label{Eq.ExtRed2}
\mathbb{P}(\mathcal{L}^1_{n_i}(t_i) \leq u_i \mbox{ for } i =1, \dots, m ) \leq \mathbb{P}(\hat{\mathcal{L}}^{\delta}_{n_i}(t_i) \leq u_i \mbox{ for } i =1, \dots, m ).
\end{equation}
Notice that by Proposition \ref{S13P2} and the continuous mapping theorem, we have $\hat{\mathcal{L}}^{\delta} \overset{f.d.}{\rightarrow} \mathcal{L}^{L,R}$ as $\delta \rightarrow 0+$. Similarly to Step 1, we conclude that (\ref{Eq.ExtRed1}) with $j = 1$ holds by taking the $\delta \rightarrow 0+$ limit of (\ref{Eq.ExtRed2}). Upon swapping $\mu_1$ with $\mu_2$, and $\alpha$ with $1-\alpha$, we see that (\ref{Eq.ExtRed1}) holds for $j = 2$ as well.\\

{\bf \raggedleft Step 3.} Fix $\varepsilon > 0$. We claim that we can find sequences $\lambda_N > 1$, $r_N > 0$, $x^N = (x^N_1, \dots, x^N_{N-1}) \in \weyl_{N-1}$, $y^N = (y^N_1, \dots, y^N_{N-1}) \in \weyl_{N-1}$, and $g_N \in C([-N,N])$, depending on $\varepsilon$, $\alpha$, $\delta$, $\{a_i^+\}_{i \geq 1}$, $\{b_i^+\}_{i \geq 1}$, $L$ and $R$, that satisfy the following statements for all sufficiently large $N$, depending on the same set of parameters.

\begin{enumerate} 
	\item[(a)] The following limits both hold
\begin{equation}\label{Eq.LambdaRNLimit}
\lim_{N \rightarrow \infty} \lambda_N = 1 \mbox{ and } \lim_{N \rightarrow \infty} r_N = 0.
\end{equation}
\item[(b)] For all large $N$
\begin{equation}\label{Eq.XYLim}
x_{N-1}^N > g_N(-N), \hspace{2mm} y_{N-1}^N > g_N(N).
\end{equation}
\item[(c)] For all large $N$ 
\begin{equation}\label{Eq.SideLRightLB}
\mathbb{P}\big( E_N^{\mathrm{right}} \big) < (R+1)\varepsilon/\alpha, \mbox{ where } E_N^{\mathrm{right}} = \left\{\mathcal{L}_i^{1}(N) \leq y_i^N \mbox{ for some } i \in\{1,\dots, N-1\} \right\},
\end{equation}
\begin{equation}\label{Eq.SideLLeftLB}
\mathbb{P}\big( E_N^{\mathrm{left}} \big) < (L+1)\varepsilon/\alpha,\mbox{ where }E_N^{\mathrm{left}} = \left\{\mathcal{L}_i^{1}(-N) \leq x_i^N \mbox{ for some } i \in\{1,\dots, N-1\} \right\}.
\end{equation}
\item[(d)] For all large $N$ 
\begin{equation}\label{Eq.BotLLB}
\mathbb{P}\big( E_N^{\mathrm{bot}} \big) < \varepsilon/\alpha, \mbox{ where } E_N^{\mathrm{bot}} = \left\{\mathcal{L}_N^{1}(s) \leq g_N(s) \mbox{ for some $s \in [-N,N]$} \right\}.
\end{equation}
\end{enumerate} 

In addition, if we define the ensembles $\hat{\mathcal{L}}^{\delta, N} = \{\hat{\mathcal{L}}^{\delta, N}_i\}_{i \geq 1}$ via
\begin{equation}\label{Eq.ScaledEnsembleDef}
\hat{\mathcal{L}}^{\delta, N}_i(t) = \lambda_N^{-1} \hat{\mathcal{L}}^{\delta}_i (\lambda_N^2 t) - r_N,
\end{equation}
then the following statements also hold.
\begin{enumerate} 
\item[(e)] For all large $N$ 
\begin{equation}\label{Eq.SideLHatRightUB}
\mathbb{P}\big( \hat{E}_N^{\mathrm{right}} \big) < (R+1)\varepsilon, \mbox{ where } \hat{E}_N^{\mathrm{right}} = \left\{ \hat{\mathcal{L}}_i^{\delta,N}(N) \geq y_i^N \mbox{ for some } i \in\{1,\dots, N-1\} \right\},
\end{equation}
\begin{equation}\label{Eq.SideLHatLeftUB}
\mathbb{P}\big( \hat{E}_N^{\mathrm{left}} \big) < (L+1)\varepsilon, \mbox{ where }\hat{E}_N^{\mathrm{left}} = \left\{\hat{\mathcal{L}}^{\delta, N}_i(-N) \geq x_i^N \mbox{ for some } i \in\{1,\dots, N-1\} \right\}.
\end{equation}
\item[(f)] For all large $N$ 
\begin{equation}\label{Eq.BotLHatUB}
\mathbb{P}\big( \hat{E}_N^{\mathrm{bot}} \big) < \varepsilon, \mbox{ where } \hat{E}_N^{\mathrm{bot}} = \left\{\hat{\mathcal{L}}_N^{\delta,N}(s) \geq g_N(s) \mbox{ for some $s \in [-N,N]$}\right\}.
\end{equation}
\end{enumerate} 

{\em \raggedleft Explanation.} Condition (c) ensures that $\mathcal{L}^1$ at time $N$ is likely {\em higher} than $y^N$, and at time $-N$ is likely {\em higher} than $x^N$. On the other hand, condition (e) ensures that $\hat{\mathcal{L}}^{\delta,N}$ at time $N$ is likely {\em lower} than $y^N$, and at time $-N$ is likely {\em lower} than $x^N$. Condition (d) ensures that the $N$-th curve $\mathcal{L}^1_N$ is likely {\em higher} than $g_N$ on the whole interval $[-N,N]$, while condition (f) ensures that $\hat{\mathcal{L}}^{\delta,N}_N$ is likely {\em lower} than $g_N$ on $[-N,N]$.\\

In what follows we suppose that we can find $\lambda_N$, $r_N$, $x^N$, $y^N$ and $g_N$ as above, and proceed to prove (\ref{Eq.ExtRed2}). For convenience we set 
$$E_N^{\mathrm{bdry}} = E_N^{\mathrm{left}} \cup E_N^{\mathrm{right}} \cup E_N^{\mathrm{bot}} \mbox{ and } \hat{E}_N^{\mathrm{bdry}} = \hat{E}_N^{\mathrm{left}} \cup \hat{E}_N^{\mathrm{right}} \cup \hat{E}_N^{\mathrm{bot}},$$
and note from (c-f) above that for all large $N$
\begin{equation}\label{Eq.LikelyBoundary}
\mathbb{P}\big( E_N^{\mathrm{bdry}} \big) < (L + R + 3)\varepsilon/\alpha \mbox{ and } \mathbb{P}\big( \hat{E}_N^{\mathrm{bdry}} \big) < (L + R + 3)\varepsilon.
\end{equation}

Suppose $N$ is large enough so that (\ref{Eq.XYLim}) and (\ref{Eq.LikelyBoundary}) hold, and $n_i, |t_i| \leq N-1$ for $i = 1, \dots, m$. By the Brownian Gibbs property of $\mathcal{L}^{1}$ and the monotone coupling Lemma \ref{Lem.MonCoup}, we have
\begin{equation*}
\begin{split}
&{\bf 1}_{(E_N^{\mathrm{bdry}})^c} \cdot \mathbb{E}\left[ {\bf 1} \left\{\mathcal{L}^1_{n_i}(t_i) \leq u_i \mbox{ for } i =1, \dots, m  \right\} \big{\vert} \mathcal{F}^1_{\mathrm{ext}} (\{1, \dots, N-1\} \times (-N,N))\right] \\
& \leq {\bf 1}_{(E_N^{\mathrm{bdry}})^c} \cdot \mathbb{E}_{\mathrm{avoid}}^{-N, N, x^N, y^N, \infty, g_N}\left[ {\bf 1} \left\{\mathcal{Q}_{n_i}(t_i) \leq u_i \mbox{ for } i =1, \dots, m  \right\} \right]. 
\end{split}
\end{equation*}
Taking expectations on both sides, using the tower property for conditional expectation, and utilizing the first inequality in (\ref{Eq.LikelyBoundary}), we conclude
\begin{equation}\label{Eq.LProbabilityLB}
\begin{split}
&\mathbb{P}\left(\mathcal{L}^1_{n_i}(t_i) \leq u_i \mbox{ for } i =1, \dots, m  \right) \\
& \leq  \mathbb{P}_{\mathrm{avoid}}^{-N, N, x^N, y^N, \infty, g_N}\left( \mathcal{Q}_{n_i}(t_i) \leq u_i \mbox{ for } i =1, \dots, m   \right) + (L + R + 3)\varepsilon/\alpha. 
\end{split}
\end{equation}

An analogous argument applied to $\hat{\mathcal{L}}_N^{\delta,N}$ yields for all large $N$
\begin{equation}\label{Eq.LHatProbabilityUB}
\begin{split}
&\mathbb{P}\left(\hat{\mathcal{L}}^{\delta, N}_{n_i}(t_i) \leq u_i \mbox{ for } i =1, \dots, m  \right) \\
& \geq  \mathbb{P}_{\mathrm{avoid}}^{-N, N, x^N, y^N, \infty, g_N}\left( \mathcal{Q}_{n_i}(t_i) \leq u_i \mbox{ for } i =1, \dots, m   \right) - (L + R + 3)\varepsilon. 
\end{split}
\end{equation}
We mention that $\hat{\mathcal{L}}^{\delta,N}$ satisfies the Brownian Gibbs property in view of Lemma \ref{Lem.Affine}.\\

Combining (\ref{Eq.LProbabilityLB}) and (\ref{Eq.LHatProbabilityUB}), we conclude for all large $N$
$$\mathbb{P}\left(\mathcal{L}^1_{n_i}(t_i) \leq u_i \mbox{ for } i =1, \dots, m  \right) - \mathbb{P}\left(\hat{\mathcal{L}}^{\delta, N}_{n_i}(t_i) \leq u_i \mbox{ for } i =1, \dots, m  \right) \leq (1+1/\alpha)(L+R+3)\varepsilon.$$

Letting $N \rightarrow \infty$ in the last inequality and using the fact that $\hat{\mathcal{L}}^{\delta, N} \Rightarrow \hat{\mathcal{L}}^{\delta}$ (which follows from (\ref{Eq.LambdaRNLimit}), (\ref{Eq.ScaledEnsembleDef}) and the continuous mapping theorem), we conclude  
$$\mathbb{P}\left(\mathcal{L}^1_{n_i}(t_i) \leq u_i \mbox{ for } i =1, \dots, m  \right) - \mathbb{P}\left(\hat{\mathcal{L}}^{\delta}_{n_i}(t_i) \leq u_i \mbox{ for } i =1, \dots, m  \right) \leq (1+1/\alpha)(L+R+3)\varepsilon.$$
Letting $\varepsilon \rightarrow 0+$ in the last inequality, we get (\ref{Eq.ExtRed2}).\\

%
%
{\bf \raggedleft Outline of Part II.} In the remainder of this section we discuss at a high level the way we construct the sequences $\lambda_N$, $r_N$, $x^N$, $y^N$ and $g_N$, satisfying conditions (a-f) above. See also Figure \ref{S61}.\\
\begin{figure}[h]
\centering
\includegraphics[width=0.8\textwidth]{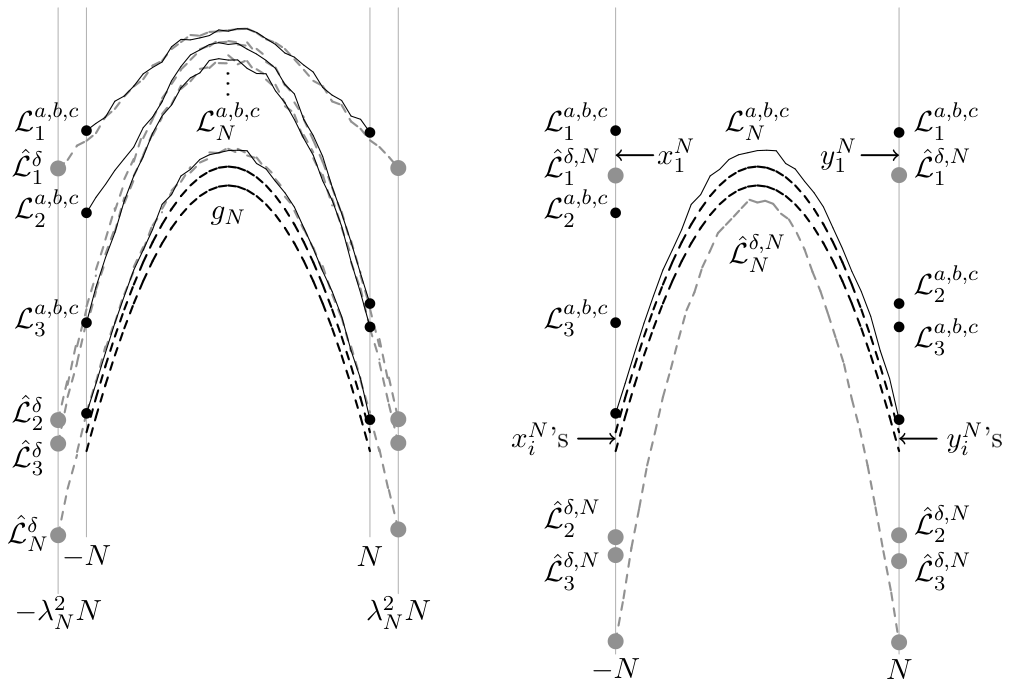}
\caption{The left figure depicts the top $N$ curves in $\mathcal{L}^{a,b,c}$ (in black) on $[-N,N]$ and $\hat{\mathcal{L}}^{\delta}$ (in dashed gray) on $[-\lambda_N^2 N, \lambda_N^2 N]$, for $J_a^+ = L = R = 1$ and $J_b^+ = 2$. The two dashed black parabolas are $g_N$ and $g_N + N^{-1/6}$. \\
\vspace{0mm}
\hspace{5mm}The right figure depicts the curve endpoints $\mathcal{L}_i^{a,b,c}(\pm N)$ and $\hat{\mathcal{L}}^{\delta,N}_i(\pm N)$ for $i = 1, \dots, N-1$, as well as the bottom curves $\mathcal{L}_N^{a,b,c}$ and $\hat{\mathcal{L}}^{\delta,N}_N$ on $[-N,N]$. The figure also depicts the locations of $x_i^N$ and $y_i^N$, which, for $i = 2, \dots, N-1$, are all squeezed between $g_N(\pm N)$ and $g_N(\pm N) + N^{-1/6}$. Distances in the figure have been exaggerated for clarity.}
\label{S61}
\end{figure}

Firstly, we will take $\lambda_N = 1 + \Theta(N^{-1})$ and $r_N = \Theta(N^{-1/12})$. Both exponents here are not very important, and can be modified, but these will suffice for our purposes. 

One major difficulty is finding a function $g_N$, satisfying conditions (d) and (f). The crucial observation here is that by known bulk rigidity estimates for the parabolic Airy line ensemble $\mathcal{L}^{0,0,0}$ (as in Remark \ref{Rem.AWLE2}), we know that the $N$-th curve $\mathcal{L}^{0,0,0}_N$ {\em concentrates} near the parabola $f_N(t) = -2^{-1/2}t^2 - (3\pi)^{2/3} 2^{-7/6} N^{2/3}$, see Lemma \ref{Lem.Parabolicity} for a precise statement. We take $g_N$ to be {\em slightly} below this parabola, specifically $g_N = f_N - 2N^{-1/6}$.

By our choice of $g_N$, we know that $g_N$ is a likely lower bound for $\mathcal{L}^{0,0,0}_N$, and since $\mathcal{L}^{a,b,c}_N$ stochastically dominates $\mathcal{L}^{0,0,0}_N$, it is a likely lower bound for it as well. Using (\ref{Eq.Convex}), we can transfer this property to $\mathcal{L}^{1}_N$, establishing the lower bound estimate in (d). 

On the other hand, we know from Theorem \ref{Thm.MonCoupling} that we can stochastically ``squeeze'' $\hat{\mathcal{L}}^{\delta}_N$ between $\mathcal{L}^{0,0,0}_{N-K}$ and $\mathcal{L}^{0,0,0}_{N}$. As $f_{N-K}$ and $f_N$ are both very close to $g_N$, we see that $\hat{\mathcal{L}}^{\delta}_N$ is likely very close to $g_N$. To make it likely {\em below} $g_N$, we need to shift it down slightly -- this is the origin of $r_N$ in the scaled ensemble $\hat{\mathcal{L}}^{\delta, N}$ in (\ref{Eq.ScaledEnsembleDef}). This is how we accomplish the upper bound estimate in (f). At this point we do not need the Brownian scaling by $\lambda_N$ in (\ref{Eq.ScaledEnsembleDef}), and in fact implementing it only improves the upper bound estimate in (\ref{Eq.BotLHatUB}). We will shortly explain the role of this scaling.\\

Focusing on (c), we can use the fact that by Theorem \ref{Thm.Slopes}(a), we know that $\mathcal{L}_i^{a,b,c}$ (and hence $\mathcal{L}_i^1$ by (\ref{Eq.Convex})) have asymptotic slopes $-\sqrt{2}/a_i^+$ near $\infty$ for $i = 1,\dots,R$. Focusing on (e), by our $\delta$-perturbation of the parameters in (\ref{Eq.DeltaPerturbation}), we know that $\hat{\mathcal{L}}^{\delta}_i$ have asymptotic slopes $-\sqrt{2}/\hat{a}_i^+ < -\sqrt{2}/a_i^+$ for $i = 1, \dots, R$. This enables us to find $y_1^N < \dots < y^N_{R}$, so that with high probability
$$  -N\sqrt{2}/\hat{a}_i^+ < y^N_i < -N\sqrt{2}/a_i^+ \mbox{, and hence } \hat{\mathcal{L}}^{\delta}_i(N)< y^N_i < \mathcal{L}^1_i(N).$$
This takes care of the bounds in (\ref{Eq.SideLRightLB}) and (\ref{Eq.SideLHatRightUB}) for $i = 1, \dots, R$. Here, the Brownian scaling in (\ref{Eq.ScaledEnsembleDef}) and the shift by $r_N$ do not play a substantial role.

In order to handle the bounds in (\ref{Eq.SideLRightLB}) and (\ref{Eq.SideLHatRightUB}) for $i \geq R+1$, we finally need to utilize the Brownian scaling in (\ref{Eq.ScaledEnsembleDef}). For $\mathcal{L}^1_{R+i}(N)$, we have the likely lower bound
$$\mathcal{L}^1_{R+i}(N) \geq \mathcal{L}_N^1(N) \geq f_N(N) - N^{-1/6} = g_N(N) + N^{-1/6}.$$ 
On the other hand, by Theorem \ref{Thm.Slopes}(c), we have the likely upper bound for all $i \geq 1$
$$\hat{\mathcal{L}}^{\delta, N}_{R+i}(N) \leq \hat{\mathcal{L}}^{\delta, N}_{R+1}(N) \approx -2^{-1/2} \lambda_N^3 N^2 + O(1) \leq -2^{-1/2}N^2 - N^{3/4},$$
where the last inequality used $\lambda_N = 1 + \Theta(N^{-1})$, and $N^{3/4}$ could be replaced by a small constant times $N$. Since $N^{3/4}$ dominates $(3\pi)^{2/3} 2^{-7/6} N^{2/3}$, we can find $y^N_{R+1}> \dots > y^N_{N-1}$, so that 
$$ g_N(N) + N^{-1/6} > y^N_{R+1}> \dots > y^N_{N-1} > g_N(N) >  -2^{-1/2}N^2 - N^{3/4}.$$

Putting together the last three displayed inequalities, we deduce the remaining bounds $i = R+1, \dots, N-1$ in (\ref{Eq.SideLRightLB}) and (\ref{Eq.SideLHatRightUB}). The bounds (\ref{Eq.SideLLeftLB}) and (\ref{Eq.SideLHatLeftUB}) are handled analogously, by replacing $R$ with $L$, $a_i^+, \hat{a}_i^+$ with $b_i^+, \hat{b}_i^+$, and $y^N$ with $x^N$. This completes (c) and (e).

%
%
\subsection{Proof of Theorem \ref{Thm.Extreme}: Part II}\label{Section6.3} In this section we provide the details behind the outline from the previous section, and construct the sequences $\lambda_N$, $r_N$, $x^N$, $y^N$ and $g_N$, satisfying conditions (a-f) there. In the rest of this section we fix $\varepsilon$, $\alpha$, $\delta$, $\{a_i^+\}_{i \geq 1}$, $\{b_i^+\}_{i \geq 1}$, $L$ and $R$ as in Section \ref{Section6.2}. All inequalities below hold provided that $N$ is sufficiently large depending on these parameters --- we do not mention this further.\\

{\bf \raggedleft Specifying the sequences.} We define $\lambda_N$ and $r_N$ through
\begin{equation}\label{Eq.DefLambdaRN}
\lambda_N = e^{1/N}, \hspace{2mm} r_N = N^{-1/12},
\end{equation}
and note that (\ref{Eq.LambdaRNLimit}) holds, verifying condition (a). In addition, we define 
\begin{equation}\label{Eq.DefGN} 
g_N(t) = -2^{-1/2}t^2 - (3\pi)^{2/3} 2^{-7/6} N^{2/3} - 2N^{-1/6}.
\end{equation}

We next turn to specifying $x^N, y^N$. From (\ref{Eq.DeltaPerturbation}), we have 
$$ a_i^+ > \hat{a}_i^+ \mbox{ for } i = 1, \dots, R;\hspace{2mm} b_i^+ > \hat{b}_i^+, \mbox{ for } i = 1, \dots, L.$$
From the last set of inequalities, we conclude that we can find $(\alpha_1, \dots, \alpha_{R}) \in \weyl_R$ and $(\beta_1, \dots, \beta_L) \in \weyl_L$, such that  
\begin{equation}\label{Eq.ParSqueeze}
-\sqrt{2}/\hat{a}_i^+ < \alpha_i < - \sqrt{2}/a_i^+ \mbox{ for } i =1, \dots, R, \mbox{ and }-\sqrt{2}/\hat{b}_i^+ < \beta_i < - \sqrt{2}/b_i^+ \mbox{ for } i = 1, \dots, L.
\end{equation}
For example, we can set $\alpha_i = - \sqrt{2}/a_i^+ - i \rho$, $\beta_j = - \sqrt{2}/b_j^+ - j \rho$ with $\rho > 0$ sufficiently close to zero.

We now define 
\begin{equation}\label{Eq.DefXY}
\begin{split}
&x_i^N = \beta_i N, \mbox{ for } i = 1, \dots, L, \mbox{ and } x_{L+i}^N = g_N(-N) +  \frac{N-i}{N^2} \mbox{, for } i =1, \dots, N -L - 1; \\
&y_i^N = \alpha_i N, \mbox{ for } i = 1, \dots, R, \mbox{ and } y_{R+i}^N = g_N(N) + \frac{N-i}{N^2} \mbox{, for } i =1, \dots, N -R - 1. 
\end{split}
\end{equation}
Using that $(\alpha_1, \dots, \alpha_{R}) \in \weyl_R$, $(\beta_1, \dots, \beta_L) \in \weyl_L$, and the quadratic decay of $g_N(\pm N)$ in $N$, we conclude from (\ref{Eq.DefXY}) that $x^N, y^N \in \weyl_{N-1}$ for all large $N$. In addition, the same equation implies (\ref{Eq.XYLim}), verifying condition (b).\\

In our work so far, we specified the five sequences $\lambda_N$, $r_N$, $x^N$, $y^N$ and $g_N$, and showed that they satisfy conditions (a) and (b). We next proceed to verify the other four conditions.\\

{\bf \raggedleft Verifying conditions (d) and (f).} We require the following lemma.
\begin{lemma}\label{Lem.Parabolicity} Let $\mathcal{L}^{0,0,0}$ be as in Proposition \ref{S12AWD} when all parameters $a_i^{\pm}, b_i^{\pm}$ and $c^{\pm}$ are equal to zero. Then, for $f_N(s) = -2^{-1/2}s^2 - (3\pi)^{2/3} 2^{-7/6} N^{2/3}$, we have
\begin{equation}\label{Eq.Parabolicity}
\lim_{N \rightarrow \infty} \sup_{t \in [-N^2, N^2]} N^{1/6} \left| \mathcal{L}^{0,0,0}_{N}(t) -f_N(t) \right| = 0 \mbox{ almost surely.}
\end{equation}
\end{lemma}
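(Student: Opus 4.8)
\textbf{Proof proposal for Lemma \ref{Lem.Parabolicity}.}

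The plan is to deduce the uniform parabolic concentration of the $N$-th curve of the parabolic Airy line ensemble $\mathcal{L}^{0,0,0}$ from known rigidity estimates for the Airy line ensemble $\mathcal{A}^{0,0,0} = \mathcal{A}$, together with the change of variables in (\ref{S1FDE}). Recall that $\mathcal{L}^{0,0,0}$ is related to $\mathcal{A}$ via $\sqrt{2}\,\mathcal{L}^{0,0,0}_i(t) + t^2 = \mathcal{A}_i(t)$. So (\ref{Eq.Parabolicity}) is equivalent, after multiplying by $\sqrt{2}$, to the statement that
\begin{equation*}
\lim_{N \to \infty} \sup_{t \in [-N^2,N^2]} N^{1/6} \left| \mathcal{A}_N(t) - \left( -(3\pi)^{2/3}2^{-1/3} N^{2/3} \right) \right| = 0 \quad \text{almost surely,}
\end{equation*}
i.e. $\mathcal{A}_N(t)$ is uniformly within $o(N^{-1/6})$ of the deterministic level $-c_* N^{2/3}$ with $c_* = (3\pi)^{2/3}2^{-1/3}$ over the whole window $|t| \le N^2$. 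First I would reduce the supremum over the full interval to a supremum over a single point (say $t = 0$) plus a modulus-of-continuity bound: the parabolic Airy line ensemble $\mathcal{L}^{0,0,0}$ is \emph{stationary} after subtracting the parabola — more precisely, $\mathcal{A}$ is stationary in $t$ — so $\mathcal{L}^{0,0,0}_N(t) + 2^{-1/2}t^2$ is a stationary process in $t$; hence it suffices to control its one-point fluctuations and its local oscillations on unit-length windows, and then take a union bound over $O(N^2)$ such windows.

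The key input is the depth-$N$ one-point rigidity for the Airy line ensemble: there is a constant $c_* > 0$ and, for every $\epsilon > 0$, constants so that $\mathbb{P}\big( | \mathcal{A}_N(0) + c_* N^{2/3} | \ge N^{2/3 - \epsilon} \big)$ decays superpolynomially (indeed stretched-exponentially) in $N$; this is a standard consequence of the determinantal structure / the known asymptotics of the largest-index eigenvalues in the Airy point process, or can be cited from the rigidity literature for $\mathcal{A}$ (e.g. the exponential upper/lower tail bounds for $\mathcal{A}_k(t)$ combined with the known location $-c_* k^{2/3}$ of the $k$-th Airy point; see \cite{CorHamA} and subsequent works). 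By stationarity this gives the same bound at any single time $t$. Next I would establish a quantitative modulus of continuity: using the Brownian Gibbs property of $\mathcal{L}^{0,0,0}$ restricted to a unit interval $[m, m+1]$ with $\{1, \dots, N\}$ as the index set, conditionally on the boundary data the curve $\mathcal{L}^{0,0,0}_N$ is a non-intersecting Brownian bridge, so a standard Brownian-bridge oscillation estimate (Lévy modulus, or a crude reflection bound) shows $\sup_{s,t \in [m,m+1]} |\mathcal{L}^{0,0,0}_N(s) - \mathcal{L}^{0,0,0}_N(t) + 2^{-1/2}(t^2-s^2)|$ is at most $N^{-1/6}/10$ except on an event of superpolynomially small probability — here one needs the (likely) event that the boundary values at $m, m+1$ are themselves close to the parabola, which is supplied by the previous step. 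Combining the one-point bound at the left endpoint $t = m$ of each window with the oscillation bound within the window, and taking a union bound over the $\lceil 2N^2 \rceil$ integer windows $m \in \{-N^2, \dots, N^2\}$, one gets that $\sup_{|t| \le N^2} N^{1/6}|\mathcal{L}^{0,0,0}_N(t) - f_N(t)| \le 1$ (say) off an event of probability summable in $N$; Borel--Cantelli then yields the almost sure statement, and running the argument with $1$ replaced by any $\eta > 0$ gives the claimed limit $0$.

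The main obstacle is obtaining the modulus-of-continuity bound at the correct scale $N^{-1/6}$ uniformly over $O(N^2)$ windows with superpolynomially small failure probability, since a naive Brownian-bridge oscillation on a unit interval is only $O(1)$, not $O(N^{-1/6})$. The resolution is that one should not bound the raw oscillation of $\mathcal{L}^{0,0,0}_N$ but rather the oscillation \emph{after removing the parabola} and, crucially, conditionally on boundary data that is already pinned to within $o(N^{-1/6})$ of the parabola at the two endpoints: a Brownian bridge of length $1$ with endpoints at height $O(N^{-1/6})$ has oscillation $O(\sqrt{\log N}\cdot 1)$ which is still too big, so in fact one must take shorter windows of length $\ell_N \to 0$ (e.g. $\ell_N = N^{-1}$, so that a Brownian bridge over $[m, m+\ell_N]$ has oscillation $O(\sqrt{\ell_N \log N}) = o(N^{-1/6})$), at the cost of a union bound over $O(N^2/\ell_N) = O(N^3)$ windows, which is still absorbed by the superpolynomial decay. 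One then has to be slightly careful that the lower curve $\mathcal{L}^{0,0,0}_{N+1}$ does not interfere — but it lies a macroscopic $\Theta(N^{-1/3})$ below $\mathcal{L}^{0,0,0}_N$ with overwhelming probability, again by the depth-$(N+1)$ rigidity, so the conditioning event $\{\mathcal{L}^{0,0,0}_N > \mathcal{L}^{0,0,0}_{N+1}\}$ has probability close to $1$ on the relevant good event and the avoidance constraint is not binding at scale $N^{-1/6}$. Alternatively, and more cleanly, one can invoke an off-the-shelf uniform rigidity statement for the parabolic Airy line ensemble of exactly this type from the literature (such as the rigidity results accompanying the construction in \cite{CorHamA} or their refinements), in which case the proof reduces to quoting that estimate and performing the deterministic change of variables from $\mathcal{A}$ to $\mathcal{L}^{0,0,0}$; I would state the lemma's proof this way if such a citation is available, and otherwise carry out the window-union-bound argument sketched above.
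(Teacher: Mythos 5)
Your final sentence is the paper's actual proof: it simply cites the uniform bulk rigidity theorem for the parabolic Airy line ensemble, \cite[Theorem 1.6]{DV21}, with $k = N$, $\ell_k = N^{1/6}$, $t_k = N^2$, and performs the change of variables $\sqrt{2}\,\mathcal{L}^{0,0,0} = \mathcal{L}_{\mathrm{DV21}}$. So your preferred "clean" route is available and is precisely what the paper does.

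The direct argument you sketch in the bulk of the proposal, however, has a genuine gap in its stated key input. You take as the one-point rigidity estimate that $\mathbb{P}\big(|\mathcal{A}_N(0) + c_* N^{2/3}| \ge N^{2/3-\epsilon}\big)$ decays superpolynomially, and then use this to assert that the boundary values at the window endpoints are "close to the parabola," feeding that into a bridge-oscillation bound to conclude the curve stays within $o(N^{-1/6})$ of $f_N$. But $N^{2/3-\epsilon}$ is larger than $N^{-1/6}$ by a factor of roughly $N^{5/6}$: a curve whose endpoint values are only pinned to within $N^{2/3-\epsilon}$ of the parabola can be nowhere near $o(N^{-1/6})$-close to it, so this input cannot drive the argument. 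What one actually needs is a one-point (indeed uniform-in-$t$) concentration estimate at the much finer scale $\eta N^{-1/6}$ for every $\eta > 0$, with a tail small enough to survive a union bound over $\Theta(N^2/\ell_N)$ windows. That is exactly the content of the DV21 rigidity theorem — it is not something one can recover from the coarse $N^{2/3-\epsilon}$ bound plus Brownian-bridge modulus-of-continuity estimates, since the latter only control oscillation between endpoints, not the location of the endpoints themselves at the correct fine scale. So if the off-the-shelf citation were not available, your window-union-bound strategy would still be stuck at the step of obtaining sufficiently sharp one-point concentration, which is the heart of the matter.
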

\begin{proof} The statement follows from \cite[Theorem 1.6]{DV21} upon setting $k = N$, $\ell_k = N^{1/6}$ and $t_k = N^2$. We mention that the line ensemble $\mathcal{L}$ from \cite{DV21} is related to $\mathcal{L}^{0,0,0}$ via $\sqrt{2} \cdot \mathcal{L}^{0,0,0} =  \mathcal{L}$.
\end{proof}

From Theorem \ref{Thm.MonCoupling} with $A = B = 0$, there is a coupling such that almost surely
$\mathcal{L}^{a,b,c}_N(t) \geq \mathcal{L}^{0,0,0}_N(t)$ for all $t \in \mathbb{R}$. Combining the latter with Lemma \ref{Lem.Parabolicity}, we conclude
\begin{equation*}
\mathbb{P}\left(   \mathcal{L}^{a,b,c}_{N}(s) \leq f_N(s) - N^{-1/6} \mbox{ for some $s \in [-N, N]$} \right) < \varepsilon.
\end{equation*}
The last inequality, the fact that $g_N(s) = f_N(s) - 2N^{-1/6}$ from (\ref{Eq.DefGN}), and (\ref{Eq.Convex}) imply 
\begin{equation}\label{Eq.S63LowerBoundaryL}
\mathbb{P}\left(   \mathcal{L}^{1}_{N}(s) \leq g_N(s) + N^{-1/6} \mbox{ for some $s \in [-N, N]$} \right) < \varepsilon/\alpha,
\end{equation}
which implies (\ref{Eq.BotLLB}), verifying condition (d).\\

From Theorem \ref{Thm.MonCoupling}, applied to $(\hat{a},\hat{b},0)$ and $(0,0,0)$ with $A = R$, $B = L$, there is a coupling such that almost surely
$$\hat{\mathcal{L}}^{\delta}_{N}(t) \leq \mathcal{L}^{0,0,0}_{N-K}(t),$$ 
for all $t \in \mathbb{R}$, where $K = \max(L,R)$. The latter and Lemma \ref{Lem.Parabolicity} show that
\begin{equation*}
\mathbb{P}\left( \hat{\mathcal{L}}^{\delta}_{N}(s) \geq f_{N-K}(s)  +  (N-K)^{-1/6} \mbox{ for some $s \in [-N^2, N^2]$} \right) < \varepsilon.
\end{equation*}
Using that $\lambda_N^2 N \leq N^2$, and the definition of $\hat{\mathcal{L}}^{\delta,N}$ from (\ref{Eq.ScaledEnsembleDef}), we conclude
\begin{equation*}
\mathbb{P}\left( \hat{\mathcal{L}}^{\delta,N}_{N}(s)  \geq  \lambda_N^{-1}f_{N-K}(\lambda_N^2 s) + (N-K)^{-1/6} - r_N \mbox{ for some $s \in [-N, N]$} \right) < \varepsilon.
\end{equation*}

We also note that
\begin{equation*}
\begin{split}
&\lambda_N^{-1}f_{N-K}(\lambda_N^2 s) + (N-K)^{-1/6} -r_N \leq -2^{-1/2} s^2 - \lambda_N^{-1} (3\pi)^{2/3} 2^{-7/6} (N-K)^{2/3} \\
&+ (N-K)^{-1/6} -r_N  \leq f_N(s) + N^{-1/4} + (N-K)^{-1/6} -r_N \leq g_N(s),
\end{split}
\end{equation*}
where in the first two inequalities we used $\lambda_N = e^{1/N}$, and in the last inequality we used that $g_N(s) = f_N(s) - 2N^{-1/6}$ and $r_N = N^{-1/12}$. 

The last two displayed inequalities imply (\ref{Eq.BotLHatUB}), and hence condition (f). \\

{\bf \raggedleft Verifying condition (c).} From Theorem \ref{Thm.Slopes}(a), we know $N^{-1}\mathcal{L}_i^{a,b,c}(N) \Rightarrow -\sqrt{2}/a_i^+$ for $i = 1, \dots, R$. Using that $\alpha_i < - \sqrt{2}/a_i^+$, see (\ref{Eq.ParSqueeze}), and that $y_i^N = \alpha_i N$, see (\ref{Eq.DefXY}), we conclude 
\begin{equation*}
\mathbb{P}\left( \mathcal{L}^{a,b,c}_{i}(N) \leq  y_i^N \right) < \varepsilon \mbox{, for $i = 1, \dots, R$.}
\end{equation*}
The latter and (\ref{Eq.Convex}) imply
\begin{equation}\label{Eq.SideLTopBoundR}
\mathbb{P}\left( \mathcal{L}^{1}_{i}(N) \leq  y_i^N \right) < \varepsilon/\alpha \mbox{, for $i = 1, \dots, R$}.
\end{equation}
An analogous argument using Theorem \ref{Thm.Slopes}(b) yields
\begin{equation}\label{Eq.SideLTopBoundL}
\mathbb{P}\left( \mathcal{L}^{1}_{i}(-N) \leq  x_i^N \right) < \varepsilon/\alpha \mbox{, for $i = 1, \dots, L$}.
\end{equation}

Using that $\mathcal{L}^1_i(t) \geq \mathcal{L}^1_N(t)$ for $1 \leq i \leq N$ and $t \in \mathbb{R}$, and (\ref{Eq.S63LowerBoundaryL}), we conclude 
\begin{equation*}
\mathbb{P}\left(\mathcal{L}^{1}_{i}(N) \leq g_N(N) + N^{-1/6} \mbox{ for some }i \in \{1, \dots, N\}  \right) < \varepsilon/\alpha.
\end{equation*}
Using that $y^N_{R+i} < g_N(N) + N^{-1/6}$, see (\ref{Eq.DefXY}), we conclude
\begin{equation}\label{Eq.SideLBotBoundR}
\mathbb{P}\left(\mathcal{L}^{1}_{i}(N) \leq y_i^N \mbox{ for some }i \in \{R+1, \dots, N\}  \right) < \varepsilon/\alpha.
\end{equation}
An analogous argument yields
\begin{equation}\label{Eq.SideLBotBoundL}
\mathbb{P}\left(\mathcal{L}^{1}_{i}(-N) \leq x_i^N \mbox{ for some }i \in \{L+1, \dots, N\}  \right) < \varepsilon/\alpha.
\end{equation}

By a union bound (\ref{Eq.SideLTopBoundR}) and (\ref{Eq.SideLBotBoundR}) imply (\ref{Eq.SideLRightLB}), while (\ref{Eq.SideLTopBoundL}) and (\ref{Eq.SideLBotBoundL}) imply (\ref{Eq.SideLLeftLB}). This verifies condition (c).\\

{\bf \raggedleft Verifying condition (e).} From Theorem \ref{Thm.Slopes}(a), we know $N^{-1}\hat{\mathcal{L}}_i^{\delta}(N) \Rightarrow -\sqrt{2}/\hat{a}_i^+$ for $i = 1, \dots, R$. Using the definition of $\hat{\mathcal{L}}^{\delta,N}$ from (\ref{Eq.ScaledEnsembleDef}), and that $\lambda_N = e^{1/N}$, we conclude $N^{-1}\hat{\mathcal{L}}_i^{\delta,N}(N) \Rightarrow -\sqrt{2}/\hat{a}_i^+$. Using that $\alpha_i > - \sqrt{2}/\hat{a}_i^+$, see (\ref{Eq.ParSqueeze}), and that $y_i^N = \alpha_i N$, see (\ref{Eq.DefXY}), we conclude 
\begin{equation}\label{Eq.SideLHatTopBoundR}
\mathbb{P}\left( \hat{\mathcal{L}}_i^{\delta,N}(N) \geq  y_i^N \right) < \varepsilon \mbox{, for $i = 1, \dots, R$.}
\end{equation}
An analogous argument using Theorem \ref{Thm.Slopes}(b) yields
\begin{equation}\label{Eq.SideLHatTopBoundL}
\mathbb{P}\left( \hat{\mathcal{L}}_i^{\delta,N}(-N) \geq  x_i^N \right) < \varepsilon \mbox{, for $i = 1, \dots, L$}.
\end{equation}

From Theorem \ref{Thm.Slopes}(c) for $k = R+ 1$, we know that $\hat{\mathcal{L}}_{R+1}^{\delta}(\lambda_N^2 N) + 2^{-1/2}\lambda_N^4 N^2$ is tight. We also observe from (\ref{Eq.DefLambdaRN}) that $\lambda_N^3 = e^{3/N} \geq 1 + 3/N$, and so
$$ - 2^{-1/2}\lambda_N^3 N^2 \leq -2^{-1/2} N^2 - N \leq g_N(N) - N^{3/4} - r_N,$$  
where in the last inequality we used that $r_N = N^{-1/12}$ from (\ref{Eq.DefLambdaRN}) and $g_N(N) = -2^{-1/2}N^2 +O(N^{2/3})$ from (\ref{Eq.DefGN}). Combining the last few statements with the definition of $\hat{\mathcal{L}}^{\delta,N}$ from (\ref{Eq.ScaledEnsembleDef}), we conclude 
\begin{equation*}
\mathbb{P}\left( \hat{\mathcal{L}}_{R+1}^{\delta,N}(N) \geq  g_N(N) \right) \leq \mathbb{P}\left( \lambda_N^{-1}\hat{\mathcal{L}}_{R+1}^{\delta}(\lambda_N^2 N) + 2^{-1/2}\lambda_N^3 N^2 - r_N \geq  N^{3/4} \right) < \varepsilon.
\end{equation*}

Since $y_i^N \geq g_N(N)$, see (\ref{Eq.DefXY}), and $\hat{\mathcal{L}}_{i}^{\delta,N}(N) \leq \hat{\mathcal{L}}_{R+1}^{\delta,N}(N)$ for $i \in \{R+1, \dots, N\} $ we conclude from the last equation that 
\begin{equation}\label{Eq.SideLHatBotBoundR}
\mathbb{P}\left( \hat{\mathcal{L}}_{i}^{\delta,N}(N) \geq  y_i^N \mbox{ for some }i \in \{R+1, \dots, N\} \right) < \varepsilon.
\end{equation}
An analogous argument using Theorem \ref{Thm.Slopes}(d) yields
\begin{equation}\label{Eq.SideLHatBotBoundL}
\mathbb{P}\left( \hat{\mathcal{L}}_{i}^{\delta,N}(-N) \geq  x_i^N \mbox{ for some }i \in \{L+1, \dots, N\} \right) < \varepsilon.
\end{equation}

By a union bound (\ref{Eq.SideLHatTopBoundR}) and (\ref{Eq.SideLHatBotBoundR}) imply (\ref{Eq.SideLHatRightUB}), while (\ref{Eq.SideLHatTopBoundL}) and (\ref{Eq.SideLHatBotBoundL}) imply (\ref{Eq.SideLHatLeftUB}). This verifies condition (e), and concludes the proof of Theorem \ref{Thm.Extreme}.

\bibliographystyle{amsplain} 
\bibliography{PD}

\end{document}